\DeclareFontFamily{OMX}{MnSymbolE}{}
\DeclareFontShape{OMX}{MnSymbolE}{m}{n}{
   <-6>  MnSymbolE5
   <6-7> MnSymbolE6
   <7-8> MnSymbolE7
   <8-9> MnSymbolE8
   <9-10> MnSymbolE9
   <10-12> MnSymbolE10
   <12->   MnSymbolE12
}{}
\DeclareSymbolFont{MnSyE}{OMX}{MnSymbolE}{m}{n}
\DeclareMathSymbol{\bigcupdot}{\mathop}{MnSyE}{16}
\tikzstyle{ball} = [circle,shading=ball, ball color=black,
\newtheorem{thm}{Theorem}[section]
\newtheorem{lemma}[thm]{Lemma}
\newtheorem{cor}[thm]{Corollary}
\newtheorem{prop}[thm]{Proposition}
\newtheorem{theorem}{Theorem}
\newtheorem{Definition}[thm]{Definition}
\newenvironment{defn}
  {\begin{Definition}\rm}{\end{Definition}}
\newtheorem{Example}[thm]{Example}
\newenvironment{example}
  {\begin{Example}\rm}{\end{Example}}
\newtheorem{Remark}[thm]{Remark}
\newenvironment{remark}
  {\begin{Remark}\rm}{\end{Remark}}
\newtheorem{Question}[thm]{Question}
\newenvironment{question}
  {\begin{Question}\rm}{\end{Question}}
\newtheorem{Conjecture}[thm]{Conjecture}
\newenvironment{conj}
  {\begin{Conjecture}\rm}{\end{Conjecture}}
\newtheorem{Problem}[thm]{Problem}  
\newenvironment{prob}
  {\begin{Problem}\rm}{\end{Problem}}
\DeclareMathOperator{\im}{im}
\DeclareMathOperator{\lk}{\ell k}
\DeclareMathOperator{\cir}{Cir}
\DeclareMathOperator{\Ind}{Ind}
\newcommand{\N}{\mathbb{N}}
\newcommand{\cs}{\mathcal{C}}
\newcommand{\csd}{\mathcal{D}}
\newcommand{\rk}{r}
\title[Cycle systems, coparking functions, and $h$-vectors of matroids]{Cycle systems, coparking functions, \\ and $h$-vectors of matroids}
\author{Scott Corry}
\address{Lawrence University}
\email{corrys@lawrence.edu}
\author{Anton Dochtermann}
\address{Texas State University}
\email{dochtermann@txstate.edu}
\author{Sol\'is McClain}
\email{solmcclain@gmail.com}
\author{David Perkinson}
\address{Reed College}
\email{davidp@reed.edu}
\author{Lixing Yi}
\address{University of California, San Diego}
\email{liyi@ucsd.edu}
\subjclass[2020]{Primary 05B35; Secondary 05E45, 05A19}
\begin{document}

\begin{abstract}
The $h$-vector of a matroid $M$ is an important invariant related to the independence complex of $M$ and can also be recovered from an evaluation of its Tutte polynomial. A well-known conjecture of Stanley posits that the $h$-vector of a matroid is a pure O-sequence, meaning that it can be obtained by counting faces of a pure multicomplex. Merino has established Stanley's conjecture for the case of cographic matroids via chip-firing on graphs and the concept of a $G$-parking function. Inspired by these constructions, we introduce the notion of a cycle system for a matroid $M$---a family of cycles (unions of circuits) of $M$ with overlap properties that mimic cut-sets in a graph. A choice of cycle system on $M$ defines a collection of integer sequences that we call coparking functions.  We show that for any cycle system on $M$, the set of coparking functions is in bijection with the set of bases of $M$. We show that maximal coparking functions all have the same degree, and that cycle systems behave well under deletion and contraction. This leads to a proof of Stanley’s conjecture for the case of matroids that admit cycle systems, which include, for instance, graphic matroids of cones as well as $K_{3,3}$-free graphs.
\end{abstract}

\maketitle

\section{Introduction} 

The theory of \emph{chip-firing} on a graph $G$ has led to surprising applications in algebra, geometry, and combinatorics. In the version of chip-firing most relevant to us, one fixes a sink vertex $q$ of $G$ and considers nonnegative configurations of chips $\vec{a} \in {\mathbb N}^n$ on the nonsink vertices $\widetilde{V}=\{v_1, \dots, v_n\}$. When a vertex $v_i$ \emph{fires}, one chip is sent along each edge incident to $v_i$, decreasing the number of chips $a_i$ at $v_i$ while increasing the number of chips at the neighboring vertices. Firing $v_i$ is \emph{legal} for $\vec{a}$ when $a_i\ge\deg_G(v_i)$, so that the resulting configuration is still nonnegative. More generally, firing a subset $S\subseteq\widetilde{V}$ of vertices is legal for $\vec{a}$ if, after firing all vertices in $S$, the resulting configuration is nonnegative. A configuration $\vec{a}$ is \emph{superstable} if no nonempty set of $\widetilde{V}$ may be legally fired. Superstable configurations can be seen to coincide with the set of \emph{$G$-parking functions}, integer sequences that can be defined in terms of edge cuts of $G$. These sequences in turn generalize the well-studied \emph{parking functions} that originated in queuing theory and computer science, corresponding to the special case of complete graphs. We refer to \cite{CorryPerkinson18} for more regarding the theory of chip-firing.

The $G$-parking functions on a graph $G$ are also closely related to the \emph{Tutte polynomial} $T_G(x,y)$.  To recall this connection, let $g = |E(G)| - |V(G)| + 1$ denote the \emph{corank} of the (connected) graph $G$, and for all $i \geq 0$ let $c_i$ denote the number of $G$-parking functions of degree $g-i$ (where the \emph{degree} of a sequence is simply the sum of its entries). In \cite{MerinoTutte}, Merino showed that the generating function for $G$-parking functions is, in fact, an evaluation of the Tutte polynomial along the line $x=1$, so that
\[\sum_{i=0}^g c_i y^i = T_G(1,y).\]

A \emph{matroid} $M$ is a combinatorial object that generalizes and unifies several notions of independence that arise in disparate settings. In particular, a finite graph $G$ defines a \emph{graphic matroid} $M(G)$, whose independent sets are given by collections of edges that do not contain a cycle. 

Of particular interest in this setting is the \emph{$h$-vector} of $M$, a nonnegative sequence of integers that relates to the $f$-vector of the independence complex of $M$, and which can also be recovered as an evaluation of the Tutte polynomial $T_M(x,y)$ along the line $y=1$.

Understanding the structure of $h$-vectors of arbitrary matroids has been an area of active research in recent years. For instance, in \cite{BST}, Berget, Spink, and Tseng showed that the entries form a \emph{log-concave} sequence, generalizing  an earlier result of Huh \cite{Huh}, who established the result for matroids realizable over a field of characteristic zero (which includes graphic matroids). An open question regarding the $h$-vector of a matroid is the following conjecture of Stanley. 
In what follows, a sequence $(h_0, h_1, \dots)$ of integers is an \emph{O-sequence} if there exists an order ideal $M$ of monomials containing exactly $h_i$ monomials of degree $i$. The sequence is \emph{pure} if the maximal elements of $M$ (under divisibility) all have the same degree. In other language, a pure O-sequence counts the elements in a pure multicomplex.

\begin{conj}\cite{Stanleybook}\label{conj:Stanley}
The $h$-vector of a matroid is a pure O-sequence.
\end{conj}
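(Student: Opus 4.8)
The plan is to attack the conjecture constructively, producing for an arbitrary matroid $M$ an explicit pure multicomplex whose degree-$i$ face count equals $h_i$. The first step is to translate the statement into the language of activities and then pass to the dual. Fixing a linear order on the ground set, the Tutte polynomial admits the activity expansion $T_M(x,y)=\sum_B x^{\,i(B)}y^{\,e(B)}$, the sum over bases $B$ weighted by internal and external activity. Since the $h$-vector is the evaluation along $y=1$, its entries are exactly the coefficients of $T_M(x,1)=\sum_B x^{\,i(B)}$, so the $h$-vector records the number of bases of each internal activity. Matroid duality interchanges internal and external activity and sends bases to their complements, so the internal-activity distribution of $M$ equals the external-activity distribution of $M^*$. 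Hence proving the conjecture for all $M$ is equivalent to showing that for every matroid $N$ the external-activity generating function $\sum_B y^{\,e(B)}=T_N(1,y)$ is a pure O-sequence. This is precisely the ``$G$-parking side'' of the theory: in the cographic case these numbers count $G$-parking functions, and Merino's theorem \cite{MerinoTutte} realizes them as the face numbers of the pure multicomplex of superstable configurations. The reduction thus recasts the whole conjecture as the problem of building, for an arbitrary matroid, a coparking-type order ideal on the external-activity side.

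Second, I would try to manufacture that order ideal directly from matroid data, mimicking the cycle-system construction of the present paper but without presupposing that a cycle system exists. The template is clear: one wants coordinates indexed by ground-set elements, a notion of legal firing governed by the overlap of cocircuits (cuts), and a downward-closed family of monomials in bijection with the bases, in which the monomial attached to $B$ has degree $g-e(B)$, where $g$ is the corank, and the maximal monomials all attain degree $g$. The structural results of the present paper already supply the three properties one needs once such a structure is in hand: the bijection with bases, the equidegree of the maximal coparking functions, and the compatibility with deletion and contraction that drives an inductive proof. The remaining task is to exhibit the underlying cut-overlap data intrinsically for every matroid, not merely for those that happen to admit a cycle system.

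The main obstacle --- and the reason the conjecture remains open --- is precisely this last point. The cut-overlap axioms abstracted into a cycle system encode exactly the structural feature of graphs (and of cographic, cone, and $K_{3,3}$-free matroids) that makes the candidate multicomplex downward closed and pure; a general matroid need not possess any family of cycles with these overlap properties, and no canonical substitute is known. Concretely, downward-closedness of the candidate order ideal relies on a firing move whose legality is controlled by cocircuit containments, and for an arbitrary matroid the cocircuits can interact in ways that destroy both the order-ideal property and the equidegree of the maximal elements. I would therefore expect the inductive skeleton (reduction to the external-activity formulation, deletion--contraction, equidegree of the maximal elements) to transfer essentially verbatim, while the genuinely hard and currently unresolved step is to produce the intrinsic cut-overlap data --- equivalently, a coparking function --- for matroids lying outside the classes that admit cycle systems.

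As a hedge, one could instead pursue an algebraic route through the Artinian reduction of the Stanley--Reisner ring of the independence complex of $M$. That complex is Cohen--Macaulay, which already forces $h_i\ge 0$, and purity of the resulting O-sequence would follow from exhibiting a monomial vector-space basis of the Artinian reduction that is closed under divisibility and whose generators all have a single degree. The difficulty here is dual to the combinatorial one: converting an arbitrary vector-space basis of the Artinian reduction into such a pure monomial order ideal is, in general, exactly the open problem, and no Lefschetz-type mechanism that would force this structure is known for all matroids. For this reason I regard the constructive coparking approach of the first three paragraphs as the more promising attack, with the intrinsic construction of cut-overlap data as the decisive open step.
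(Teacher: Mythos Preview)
The statement you are attempting is Stanley's \emph{conjecture}; the paper does not prove it in general and does not claim to. What the paper actually establishes is the special case recorded as Corollary~\ref{cor:Stanley}: the conjecture holds for any matroid that admits a cycle system. There is therefore no ``paper's own proof'' of Conjecture~\ref{conj:Stanley} to compare your proposal against.

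Your proposal is not a proof either, and you say so explicitly. What you have written is a research outline whose template matches the paper's strategy for the special case: realize the $h$-vector as the degree vector of a pure multicomplex of coparking-type functions, verify purity, and propagate the equality with $h(M)$ via deletion--contraction. You correctly identify the decisive missing ingredient as the intrinsic construction of cycle-system-like overlap data for an \emph{arbitrary} matroid, and you correctly observe that this is exactly where the method stops --- the paper itself exhibits matroids (e.g.\ $K_{3,3}$, $U(2,4)$) with no cycle system, so the coparking multicomplex simply does not exist in those cases under the present definition. Your duality reduction in the first paragraph is a valid reformulation but does not buy anything new: since every matroid is a dual, passing from the $T_M(x,1)$ side to the $T_{M^*}(1,y)$ side leaves the class of instances unchanged.

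In short: there is a genuine gap, you have named it, and it is the same gap that leaves Conjecture~\ref{conj:Stanley} open. Nothing in your proposal (or in the paper) supplies the missing cut-overlap data for matroids outside the cycle-system class, and without it the inductive skeleton you describe has nothing to induct on.
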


Conjecture \ref{conj:Stanley} has led to a substantial body of work, and for instance has been established for lattice-path matroids by Schweig in \cite{Schweig}, cotransversal matroids by Oh in \cite{Oh}, paving matroids by Merino, Noble, Ram\'irez-Ib\'a\~nez, and Villarroel-Flores in \cite{MNRV}, positroids by He, Lai, and Oh in \cite{HLO}, internally perfect matroids by Dall in \cite{Dall}, rank 3 matroids by H\'a, Stokes, and Zanello in \cite{HSZ}, rank 3 and corank 2 matroids by DeLoera, Kemper, and Klee in \cite{DKK}, rank 4 matroids by Klee and Samper in \cite{KleeSamper}, rank $d$ matroids with $h_d \leq 5$ by Constantinescu, Kahle, and Varbaro in \cite{CKV}.
The conjecture is mostly open for graphic matroids, although it has been established for `coned' graphs by Kook in \cite{Kook} and for `bi-coned' graphs by Preston et al.~in \cite{CDHMOT}.

Another special case of Stanley's conjecture can be recovered from Merino's work. He shows that the numbers $c_i$ described above are the entries of the $h$-vector of $M(G)^*$, the matroid that is \emph{dual} to the graphic matroid $M(G)$. One can also show that for any graph $G$, the \emph{maximal} $G$-parking functions all have the same degree \cite{Merino2001}, and so generate a pure multicomplex. This establishes Conjecture \ref{conj:Stanley} for the case of \emph{cographic} matroids.

\subsection{Our contributions}
What about other matroids? In this paper we seek to generalize the results and constructions discussed above to more general matroidal settings, with a view towards Stanley's conjecture.
Our initial observation is that the conditions defining a $G$-parking function can be phrased in terms of the cut-sets (cocircuits) determined by subsets of the vertex set. This is a typical strategy when one seeks to generalize graph theoretic results to the matroidal setting: we `replace' the set of vertices of a graph with the cocircuits determined by each vertex. 
In addition, the bijection between $G$-parking functions and spanning trees can be seen to rely on the following overlap property of cocircuits: for any subset $S \subset V(G)$ of vertices, the set of edges in the cut determined by $S$ is precisely the collection of elements that appear \emph{exactly once} in the multiset of edges produced by the union of the edge-cuts corresponding to the individual vertices of $S$.  

For a graph $G$, it is sometimes possible to find a collection of \emph{circuits} of $G$ that also satisfies this overlap property. In particular, this will always be true for planar graphs, where the circuits can be identified with the cocircuits of the planar dual $G^*$.
We illustrate this with an example that we will revisit later, in Example~\ref{example: intro}. Let $G$ be the (multi)graph on 4 vertices with 7 edges shown in Figure~\ref{fig:intro}, where $q$ is the designated sink vertex.

Firing the vertex $w_1$ sends one chip along each of the edges $3,4,7$, hence will be legal for any configuration $(a_1,a_2,a_3)$ with $a_1\ge 3$. The net effect of firing the set $S=\{w_1,w_2\}$ will be to send one chip along each of the edges $2,4,6,7$, and so will be legal for any configuration with $a_1,a_2\ge 2$. To view these facts via cocircuits, let $C_1=\{3,4,7\}$ and $C_2=\{2,3,6\}$ be the edge-cuts determined by $w_1$ and $w_2$ respectively. Then the set of edges appearing exactly once in the multiset-union of $C_1$ and $C_2$ is $\{2,4,6,7\}$, precisely the edge-cut determined by the vertex set $S=\{w_1,w_2\}$. 

Since $G$ is planar, its dual matroid $M(G)^*$ is defined by the dual graph $G^*$ shown in Figure~\ref{fig:intro}.

\begin{figure}[ht]
  \centering

\begin{tikzpicture}[scale=1]
    \tikzset{mystyle/.style={circle,minimum size=4mm,inner sep=0pt,fill=gray!20,thin,solid}}

    \coordinate (c0) at (0,0);
    \coordinate (c1) at (2,0.5);
    \coordinate (c2) at (1,1.732);
    \coordinate (c3) at (-1,1.732);
    \coordinate (c4) at (-2,0.5);

    \coordinate (012) at ($0.334*(c0) + 0.333*(c1) + 0.333*(c2)$);
    \coordinate (023) at ($0.334*(c0) + 0.333*(c2) + 0.333*(c3)$);
    \coordinate (034) at ($0.334*(c0) + 0.333*(c3) + 0.333*(c4)$);

    \node[ball] (q) at (0,2.9) {};
    \node[ball,label={[xshift=1ex]below:~$w_{1}$}] (w1) at (034) {};
    \node[ball,label={below:~$w_{2}$}] (w2) at (023) {};
    \node[ball,label={[xshift=-1ex]below:~$w_{3}$}] (w3) at (012) {};
    \draw[thick] (w3)--(w2);
    \draw[thick] (w2)--(w1);
    \draw[thick] (q) to[out=0,in=30,looseness=1.5] (w3);
    \draw[thick] (q) to[out=0, in=90] (3,0) to[out=270, in=-40] (w3);
    \draw[thick] (q) to[out=180,in=150,looseness=1.5] (w1);
    \draw[thick] (q) to[out=180, in=90] (-3,0) to[out=270, in=220] (w1);
    \draw[thick] (q)--(w2);

    \node[mystyle] at (2.4,1.8) {\small $1$};
    \node[mystyle] at ($(c0)!0.55!(c2)$) {\small $2$};
    \node[mystyle] at ($(c0)!0.55!(c3)$) {\small $3$};
    \node[mystyle] at (-2.4,1.8) {\small $4$};
    \node[mystyle] at (0,2) {\small $6$};
    \node[mystyle] at (1.6,1.8) {\small $5$};
    \node[mystyle] at (-1.6,1.8) {\small $7$};

    \node at (0,-1) {$G$};
 
    \begin{scope}[xshift=8cm]
      \coordinate (c0) at (0,0);
      \coordinate (c1) at (2,0.5);
      \coordinate (c2) at (1,1.732);
      \coordinate (c3) at (-1,1.732);
      \coordinate (c4) at (-2,0.5);
      \node[ball,label={below:$v_0$}] (0) at (0,0) {};
      \node[ball,label={right:$v_1$}] (1) at (2,0.5) {};
      \node[ball,label={above:$v_2$}] (2) at (1,1.732) {};
      \node[ball,label={above:$v_3$}] (3) at (-1,1.732) {};
      \node[ball,label={left:$v_4$}] (4) at (-2,0.5) {};

      \coordinate (012) at ($0.334*(c0) + 0.333*(c1) + 0.333*(c2)$);
      \coordinate (023) at ($0.334*(c0) + 0.333*(c2) + 0.333*(c3)$);
      \coordinate (034) at ($0.334*(c0) + 0.333*(c3) + 0.333*(c4)$);

      \begin{scope}[opacity=0.2]
	  \node[ball] (q) at (0,2.9) {};
	  \node[ball] (w1) at (034) {};
	  \node[ball] (w2) at (023) {};
	  \node[ball] (w3) at (012) {};
	  \draw[thick] (w3)--(w2);
	  \draw[thick] (w2)--(w1);
	  \draw[thick] (q) to[out=0,in=30,looseness=1.5] (w3);
	  \draw[thick] (q) to[out=0, in=90] (3,0) to[out=270, in=-40] (w3);
	  \draw[thick] (q) to[out=180,in=150,looseness=1.5] (w1);
	  \draw[thick] (q) to[out=180, in=90] (-3,0) to[out=270, in=220] (w1);
	  \draw[thick] (q)--(w2);
      \end{scope}

      \draw[thick] (0)--(1) node[mystyle,pos=0.75] {\small $1$};
      \draw[thick] (0)--(2) node[mystyle,pos=0.55] {\small $2$};
      \draw[thick] (0)--(3) node[mystyle,pos=0.55] {\small $3$};
      \draw[thick] (0)--(4) node[mystyle,pos=0.75] {\small $4$};
      \draw[thick] (1)--(2) node[mystyle,pos=0.5] {\small $5$};
      \draw[thick] (2)--(3) node[mystyle,pos=0.5] {\small $6$};
      \draw[thick] (3)--(4) node[mystyle,pos=0.5] {\small $7$};

      \node at (0,-1) {$G^*$};
    \end{scope}
  \end{tikzpicture}
  
  \caption{A graph $G$ with its dual $G^{*}$.}
  \label{fig:intro}
\end{figure}
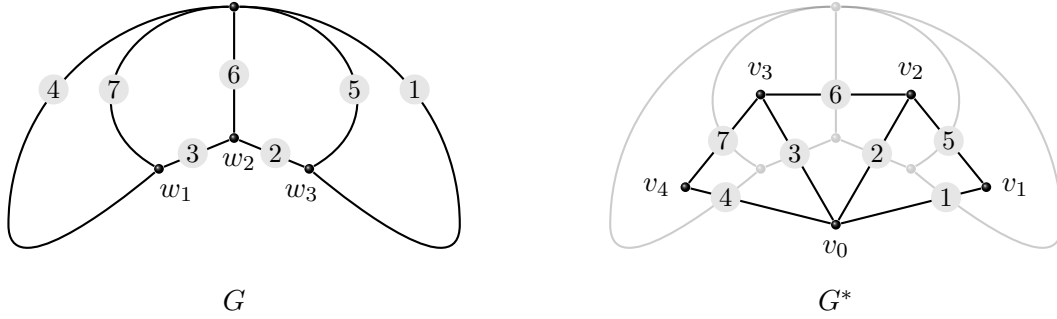

\noindent
The cocircuits defined by the nonsink vertices of the original graph $G$ correspond to the bounded faces of $G^*$; define $C_3=\{1,2,5\}$ to complement the definitions of $C_1$ and $C_2$ above. Then the $C_i$ provide a set of $g(G^*)=3$ circuits with the special property that the set of edges appearing exactly once in any nonempty multiset-union of the $C_i$ is dependent (i.e., contains a circuit). Moreover, $G$-parking functions on the original graph may instead be viewed as functions defined on the circuits $C_i$, subject to inequalities related to this overlap property. This leads to our introduction of \emph{coparking functions} in Definition~\ref{defn:coparking}.

Motivated by these observations, we define the \emph{unique union operator} and use it to introduce the notion of a \emph{cycle system} for a matroid $M$ (see Definition \ref{defn:cyclesystem}).
Here we consider collections of cycles of a matroid $M$ with the desired overlap property, where a \emph{cycle} is by definition an ordinary union of circuits of $M$.
As indicated in the previous example, the definition mimics the structure of cut-sets in a graph. Indeed, if $G$ is a graph with $n+1$ vertices, the collection of edge-cuts defined by any $n$ vertices defines a cycle system for the dual matroid $M(G)^*$. 

Our first collection of results investigates the structure of cycle systems.  Unfortunately, not all matroids admit cycle systems (the graphic matroid of the complete bipartite graph $K_{3,3}$ is one such example, see below).  Our positive results for cycle systems can be summarized as follows.

\begin{theorem}[Proposition \ref{prop:del-con cs}, Theorem \ref{thm:circuits}, Theorem \ref{thm:components}, Proposition \ref{prop:circuitspace}]
Suppose $M$ is a matroid with cycle system $\cs = \{C_1, C_2, \dots, C_g\}$.

\begin{itemize}
    \item
    For some choice of $e \in M$, the deletion $M \backslash e$ and contraction $M/e$ both admit cycle systems.
    \item If $M$ is connected, then each $C_i$ is a circuit of $M$.
    \item If $M$ is not connected, then $\cs$ can be decomposed into a sum of circuit systems on the underlying components (in the appropriate sense).
    \item If $M$ is connected and binary, then $\cs$ is a basis for the circuit space of $M$.
\end{itemize}
\end{theorem}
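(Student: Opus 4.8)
The plan is to use the deletion--contraction statement as the engine for a single induction on $|E(M)|$ that proves the two structural statements (``each $C_i$ is a circuit'' and the component decomposition) at once, and then to deduce the statement about the circuit space as a short corollary. Throughout write $\nabla_S$ for the unique union of $\{C_i : i \in S\}$, i.e.\ the set of elements lying in exactly one $C_i$ with $i\in S$.

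\textbf{Deletion and contraction.} Apply the cycle-system axiom with $S=\{1,\dots,g\}$: since $\nabla_{\{1,\dots,g\}}$ is a nonempty cycle, some element $e$ lies in exactly one member, say $C_g$ after relabeling (degenerate cases such as $g\le 1$ or $C_g=\{e\}$ are immediate). As $e$ lies in a circuit contained in $C_g$, it is neither a loop nor a coloop, so the corank of $M$ drops by $1$ under deletion of $e$ and is unchanged under contraction of $e$. One then checks directly that $\{C_1,\dots,C_{g-1}\}$ is a cycle system of $M\backslash e$ --- each $C_i$ with $i<g$ has the same restriction in $M\backslash e$ as in $M$, and each $\nabla_S$ with $S\subseteq\{1,\dots,g-1\}$ is computed inside $E\setminus e$ --- and that $\{C_1,\dots,C_{g-1},C_g\setminus e\}$ is a cycle system of $M/e$: here $C_g\setminus e$ is a cycle of $M/e$ because $(M/e)|(C_g\setminus e)=(M|C_g)/e$ and contraction never creates a coloop (dual to the fact that deletion never creates a loop), while the unique unions are unchanged except that $e$ is removed. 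This step is bookkeeping once $e$ has been located, and I anticipate no real difficulty.

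\textbf{Each member is a circuit, and the decomposition.} Induct on $|E(M)|$, the base cases being immediate, and pick $e$ as above. If $M$ is connected then, by the standard fact that a connected matroid with at least two elements has $M\backslash e$ or $M/e$ connected, one of these minors is connected. If $M/e$ is connected, the induction hypothesis makes $C_1,\dots,C_{g-1}$ and $C_g\setminus e$ all circuits of $M/e$; for $i<g$ this forces $C_i$ to be a circuit of $M$ (a dependent set of $M$ avoiding $e$ that becomes a circuit in $M/e$ was already a circuit), while $C_g\setminus e$ being a circuit of $M/e$ gives $\rk_M(C_g)=|C_g|-1$, so the cycle $C_g$ has corank $1$ and is therefore a single circuit --- a corank-$1$ cycle containing two distinct circuits would, by submodularity of the rank, have corank at least $2$. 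The hard sub-case is that $M\backslash e$ is connected but $M/e$ is not; then $M$ is a parallel connection of two proper minors with basepoint $e$, the induction hypothesis only gives that $C_1,\dots,C_{g-1}$ are circuits, and $C_g$, the member carrying the basepoint, must be recognized as a circuit from the structure of the parallel connection. This is also where the decomposition statement is needed: for disconnected $M$ the members of $\cs$ need not lie one per component (in the graph picture, the cut at a cut-vertex straddles two blocks), so one shows that repeatedly applying the unique union operator ``un-glues'' $\cs$ into a disjoint union of cycle systems of the connected components, each of which consists of circuits by the induction hypothesis. The behavior of the unique union operator across a separation, together with the parallel-connection bookkeeping, is the main obstacle of the whole theorem.

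\textbf{The circuit space.} Finally suppose $M$ is connected and binary. By the previous part each $C_i$ is a circuit, hence a vector of the circuit space of $M$, which over $\mathbb{F}_2$ has dimension $g$. If the $C_i$ were linearly dependent, then $\bigtriangleup_{i\in S}C_i=\emptyset$ for some nonempty $S$; but any element lying in exactly one $C_i$ with $i\in S$ lies in an odd number of them, so $\nabla_S$ is contained in this empty symmetric difference, contradicting the cycle-system axiom. Hence $C_1,\dots,C_g$ are linearly independent, and since the circuit space has dimension $g$ they form a basis.
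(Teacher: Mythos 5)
There is a genuine gap, and you have in fact flagged it yourself: your argument for ``each $C_i$ is a circuit'' is complete only in the sub-case where $M/e$ is connected, and your treatment of the component decomposition is a description of what should happen rather than a proof. In the sub-case where only $M\setminus e$ is connected, induction hands you $C_1,\dots,C_{g-1}$ as circuits but says nothing about $C_g$, and the ``parallel connection bookkeeping'' you propose is not carried out. The ingredient you are missing is Proposition~\ref{prop:every circuit}: \emph{every} circuit $D$ of $M$ is realized as a unique union $\cs_\sigma$ (proved by a short induction using the deletion statement). With that in hand the paper finishes Theorem~\ref{thm:circuits} uniformly, with no case split on which minor is connected and no appeal to parallel connections: if $C_g$ properly contains two circuits, strong circuit elimination produces a circuit $C\subseteq C_g$ with $e\notin C$ and an element $f\in C$, connectivity produces a circuit $K\ni e,f$, and writing $C=\cs_\tau$, $K=\cs_\sigma$ one checks $\cs_{\sigma\cup\tau}\subseteq K\setminus f$ is independent --- contradicting the cycle-system axiom. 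The same Proposition drives Lemma~\ref{lemma:decomp} ($\cs_{[g]}\cap C_i$ containing a circuit forces a direct-sum split of $M$), which together with a ``straightening'' lemma (Lemma~\ref{lemma:straightening}) powers the induction proving Theorem~\ref{thm:components}; that induction is a substantial argument in its own right and cannot be waved through as ``the unique union operator un-glues $\cs$.''

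Two smaller points. First, your claim that an element of $\cs_{[g]}$ ``lies in a circuit contained in $C_g$, hence is neither a loop nor a coloop'' is wrong for loops: a loop is itself a circuit, and the paper must (and does, via Corollary~\ref{cor:loops}) treat loops separately; the conclusion of the first bullet survives because for a loop $e$ one has $M/e=M\setminus e$, but the stated reason is not valid, and the paper's contraction statement in Proposition~\ref{prop:del-con cs} explicitly assumes $e$ is a non-loop precisely so that $g(M/e)=g(M)$. Second, the parts you do complete are sound: the deletion/contraction bookkeeping, the observation that a circuit of $M/e$ which is dependent in $M$ and avoids $e$ was already a circuit of $M$, the corank-one argument recovering $C_g$ when $M/e$ is connected (a genuinely different and slightly slicker route than the paper's for that sub-case), and the circuit-space argument, which coincides with Proposition~\ref{prop:circuitspace}.
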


In Proposition \ref{prop:two-sum} we show that if $M$ and $N$ are matroids with cycle systems, then (under certain conditions) the $2$-sum $M \oplus_2 N$ also admits a cycle system. Using this we identify a large class of matroids that admit cycle systems, including all graphic matroids of graphs that are $K_{3,3}$-free (see Corollary \ref{cor:K33-free}).

We next turn to the analogue of parking functions for cycle systems. For $M$ a matroid with cycle system $\cs$, we construct a collection of integer sequences $(a_1, a_2, \dots, a_g)$ determined by overlap properties among subsets of $\cs$. Inspired by the constructions for dual graphs discussed above, we call these \emph{coparking functions} (see Definition \ref{defn:coparking}). We let~$P^{*} = P^{*}(\cs)$ denote the set of all coparking functions. The set $P^{*}$ naturally forms a poset, so one can think of $P^{*}$ as a multicomplex.

As in \cite{MerinoTutte}, we study how coparking functions behave with respect to deletion and contraction (see Proposition \ref{prop:del-con coparking}). This allows us to relate the degree vector of coparking functions to the Tutte polynomial of $M$ and in turn establish Stanley's conjecture for matroids that admit cycle systems.  Our results in this context can be informally stated as follows.

\begin{theorem}[Algorithm \ref{alg:coparking}, Proposition \ref{prop:pure}, Theorem \ref{thm:main}]
Suppose $M$ is a matroid with cycle system~$\cs$.
\begin{itemize}

\item A version of Dhar's burning algorithm can check if a given integer sequence is a coparking function with respect to $\cs$.

\item The poset of coparking functions
  $P^*(\cs)$ is a pure multicomplex.
\item 
The  degree vector of~$P^{*}$ is the~$h$-vector of~$M$, i.e., $\deg(P^{*})=h(M)$.

\end{itemize}
\end{theorem}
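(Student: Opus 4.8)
This statement collects three facts proved separately in the body -- correctness of Algorithm \ref{alg:coparking}, Proposition \ref{prop:pure}, and Theorem \ref{thm:main} -- so the plan is to sketch each. For the burning algorithm, put $C_{0}:=\bigcupdot_{j=1}^{g}C_{j}$, the ``sink cycle'' (a genuine cycle of $M$ by the axioms) that plays the role of the cut at the sink vertex of a graph. Given $\vec a=(a_{1},\dots,a_{g})$, the algorithm maintains a set $B$ of burnt cycles, starting from $B=\{C_{0}\}$, and ignites a new $C_{i}$ whenever $a_{i}$ drops below the number of elements of $C_{i}$ in the overlap of the already-burnt cycles, halting when nothing ignites; the claim is that $\vec a\in P^{*}(\cs)$ exactly when every $C_{i}$ burns. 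The engine of the correctness proof is a monotonicity lemma drawn from Definition \ref{defn:cyclesystem}: enlarging the burnt family can only enlarge the relevant overlap with a fixed $C_{i}$. Granting it, an induction on the burning order shows the fire never reaches the index set of a ``bad'' subset $S$ (one with $a_{i}\ge|C_{i}\cap\bigcupdot_{j\in S}C_{j}|$ for every $i\in S$); so if some cycle survives, the surviving cycles form such a bad set and $\vec a\notin P^{*}(\cs)$, while if every cycle burns there can be no bad set and $\vec a\in P^{*}(\cs)$. Termination follows from $\vec a\ge 0$ and $|\cs|<\infty$.

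For Proposition \ref{prop:pure}, identify $\vec a$ with the monomial $\prod_{i}x_{i}^{a_{i}}$. That $P^{*}(\cs)$ is an order ideal is essentially by inspection of Definition \ref{defn:coparking}: each defining inequality bounds a single coordinate $a_{i}$ above by a quantity independent of $\vec a$, so it persists under replacing $\vec a$ by any $\vec b\le\vec a$. Purity -- all maximal coparking functions sharing one degree -- I would prove by induction on $|E(M)|$, with the empty matroid and the adjoining of loops or coloops (which lie in no cycle and so do not alter $P^{*}(\cs)$ or the top nonzero entry of the $h$-vector) as base cases. In the inductive step, choose $e$ as supplied by Proposition \ref{prop:del-con cs}, so $M\backslash e$ and $M/e$ acquire cycle systems $\cs'$ and $\cs''$; Proposition \ref{prop:del-con coparking} then builds $P^{*}(\cs)$ from $P^{*}(\cs')$ (degrees unchanged) and $P^{*}(\cs'')$ (degrees raised by one), compatibly enough with divisibility that purity of the two smaller posets -- known by induction -- transfers upward, and tracking ranks through deletion and contraction identifies the common degree as $\rk(M)$ when $M$ is coloopless.

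For Theorem \ref{thm:main}, set $P_{M}(y):=\sum_{\vec a\in P^{*}(\cs)}y^{\deg\vec a}$, whose coefficient sequence is $\deg(P^{*})$. From $\sum_{i}h_{i}(M)x^{\rk(M)-i}=T_{M}(x,1)$ one gets $\sum_{i}h_{i}(M)y^{i}=y^{\rk(M)}T_{M}(1/y,1)$; feeding in $T_{M}=T_{M\backslash e}+T_{M/e}$ (for $e$ neither a loop nor a coloop), with $\rk(M\backslash e)=\rk(M)$ and $\rk(M/e)=\rk(M)-1$, rearranges to $\sum_{i}h_{i}(M)y^{i}=\sum_{i}h_{i}(M\backslash e)y^{i}+y\sum_{i}h_{i}(M/e)y^{i}$. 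Proposition \ref{prop:del-con coparking}, applied to the $e$ from Proposition \ref{prop:del-con cs}, gives exactly the parallel identity $P_{M}(y)=P_{M\backslash e}(y)+y\,P_{M/e}(y)$. As the two recursions agree and also agree on the base cases (the empty matroid has $h$-vector $(1)$ and one coparking function, of degree $0$; adjoining a loop or coloop changes neither side), induction on $|E(M)|$ gives $P_{M}(y)=\sum_{i}h_{i}(M)y^{i}$, i.e.\ $\deg(P^{*})=h(M)$. Setting $y=1$ is a consistency check: it recovers $|P^{*}(\cs)|=\sum_{i}h_{i}(M)=T_{M}(1,1)$, the number of bases, matching the bijection between $P^{*}(\cs)$ and the bases of $M$.

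I expect the real work to live in Propositions \ref{prop:del-con cs} and \ref{prop:del-con coparking}: one must select $e$ for which deletion and contraction \emph{both} preserve the overlap axioms of a cycle system and, simultaneously, split the coparking functions of $M$ bijectively (with the correct degree shift) into those of $M\backslash e$ and $M/e$. In Merino's graph argument one manipulates an edge at the sink; here there are only abstract cycles, so the delicate point is tracking how $e$ sits among the members of $\cs$ and which member absorbs the effect of removing $e$. The second essential new ingredient is the monotonicity lemma behind the burning algorithm: transparent for graphs, it must be extracted from the cycle-system axioms in general. Everything else -- the order-ideal property, the base cases, and matching the two deletion-contraction recursions -- is bookkeeping.
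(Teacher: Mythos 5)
Your treatment of the algorithm and of the $h$-vector recursion matches the paper: the monotonicity lemma you isolate ($|C_i\cap\cs_\sigma|$ can only grow as the unburnt index set $\sigma$ shrinks) is precisely the engine of the correctness proof of Algorithm~\ref{alg:coparking}, and your identity $P_M(y)=P_{M\setminus e}(y)+y\,P_{M/e}(y)$ is the paper's $h(M)=h(M\setminus e)+h(M/e)^{+}$ from Theorem~\ref{thm:main} in generating-function form. Two small slips there: the unique union $\cs_{[g]}$ is only guaranteed to be \emph{dependent}, not a cycle, so your ``sink cycle'' $C_0$ is a misnomer; and a loop is a circuit, hence does lie in a cycle of $\cs$ --- the loop case of the recursion cannot be dismissed as inert and must be handled as the paper does, via Corollary~\ref{cor:loops} and the bijective deletion map of Proposition~\ref{prop:del-con coparking}.

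The genuine gap is in your purity argument, which takes a different route from the paper and does not close. You want to deduce purity of $P^*(\cs)$ from purity of $P^*(\cs')$ and $P^*(\cs'')$ by induction. But $P^*(\cs)$ is the disjoint union of a copy of $P^*(\cs')$ (the vectors with $a_i=0$) and a degree-shifted copy of $P^*(\cs'')$ (the vectors with $a_i>0$), and a maximal element of the deletion piece need not remain maximal in $P^*(\cs)$, since incrementing the $i$-th coordinate may land back inside $P^*(\cs)$. Worse, the top degrees of the two pieces genuinely disagree in general: by induction they are $\rk(M\setminus e)-\beta(M\setminus e)$ and $\rk(M/e)-\beta(M/e)+1=\rk(M)-\beta(M)$, and since deleting $e$ can create new bridges one can have $\beta(M\setminus e)>\beta(M)$, making the first strictly smaller. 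Already for $K_3$ with $\cs=\{\{a,b,c\}\}$ one has $P^*=\{0,1,2\}$; the deletion piece is the single degree-$0$ vector $(0)$, the contraction piece contributes degrees $1$ and $2$, and $(0)$ is simply non-maximal. To rescue the induction you would need an extra lemma --- that whenever $\beta(M\setminus e)>\beta(M)$ no coparking function with $a_i=0$ is maximal --- which your sketch does not supply. The paper sidesteps all of this in Proposition~\ref{prop:pure} by a direct argument: running the algorithm on an arbitrary $a$ yields an ordering $i_1,\dots,i_g$ for which the sets $U_k=C_{i_k}\cap\cs_{\{i_k,\dots,i_g\}}$ partition the non-bridges of $M$, giving $\deg a\le\sum_k(|U_k|-1)=\rk(M)-\beta$, together with an explicit dominating coparking function $c$ with $c_{i_k}=|U_k|-1$ achieving that degree.
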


In Section~\ref{sec:bijections} we exhibit a bijection between the coparking functions and bases of a matroid with a cycle system in terms of \emph{deletion/contraction trees}.  This bijection, together with our resolution of Stanley's conjecture for this class of matroids, are our primary contributions.
\medskip

\noindent\textbf{Main Results.} (Algorithms \ref{alg:basis-to-coparking} and \ref{alg:coparking-to-basis}, Corollary \ref{cor:Stanley})
Suppose $M$ is a matroid with cycle system $\cs$.

\begin{itemize}
\item 
There exists an explicit bijection between the set of coparking functions defined by $\cs$ and the set of bases of $M$.

\item 
Conjecture \ref{conj:Stanley} holds for $M$.

\end{itemize}

\subsection{Related work.}

 The unique union property also plays a role in work of Dong \cite{Dong} in his definition of \emph{$B$-parking functions}; we briefly recall the relevant ideas here. A matching $M$ of a graph $G$ is \emph{uniquely restricted} if it is the only perfect matching in the subgraph of $G$ induced by the vertices of $M$. In \cite{Dong}, it is shown that for any graph $G$ with vertex set $V$, there is a bijection between the set of spanning trees of $G$ and the set of uniquely restricted matchings of size $|V|-1$ in $S(G) \backslash x_0$, where $S(G)$ is the bipartite graph obtained from $G$ by subdividing each edge. Motivated by this, Dong defines a notion of \emph{$B$-parking function} for an arbitrary bipartite graph $B$ with bipartition $(X,Y)$.  He establishes a bijection between the set of uniquely restricted matchings of $B$ and the set of $B$-parking functions.

 The definition of a $B$-parking function involves considering sets of degree-1 vertices in certain subgraphs of $B$, which mimics our constructions in some ways. In particular, for a graph $G$ with sink vertex $q$, for each $v \in V \backslash q$ let $E_v$ denote the set of edges incident to $v$. Define $B$ to be the bipartite graph whose vertex set consists of $V \backslash q \sqcup \{E_v\}_{v \in V \backslash q}$, with edges determined by incidence. We then recover our coparking functions on the cographic matroid as the $B$-parking functions on~$B$.

\subsection{Organization.} The rest of the paper is organized as follows. Section~\ref{sec: prelim} provides background on matroids, Tutte polynomials, $h$-vectors, multicomplexes, and O-sequences. In section~\ref{sec: cycle systems}, we introduce the unique union operator, define cycle systems, and prove some initial results about them, including behavior under deletion, contraction, and 2-sum. In section~\ref{sec: coparking}, we introduce coparking functions, show that they form a pure multicomplex, and use a deletion/contraction argument to prove our main result, Theorem \ref{thm:main}: the degree vector of the poset of coparking functions is the $h$-vector for the underlying matroid $M$. This establishes Stanley's Conjecture~\ref{conj:Stanley} for the case of matroids with cycle systems. 
In section~\ref{sec:bijections}, we use deletion/contraction trees to establish bijections between the bases of a matroid with cycle system and its set of coparking functions---each bijection is determined by a choice of ordering for the ground set of $M$. Finally, in section~\ref{sec:questions} we pose some open questions and suggest directions for further study.

\subsection{Acknowledgements.}
We thank the following people for their significant contributions to our understanding of cycle systems: Marlo Albers, Charlie Bruggemann, Lily Factora, Kolja Knauer, Yupeng Li, Sanay Sehgal, and Sherry Wang. We also thank Evan Huang and Kai Mawhinney for comments regarding Theorem \ref{thm:components}. We gratefully acknowledge our extensive use of the mathematical software SageMath \cite{sagemath} in this work. 

\section{Preliminaries}\label{sec: prelim}
We begin with some background material and relevant definitions. As a notational convenience, we usually omit brackets for singleton sets.

\subsection{Matroids}
We first review some basic notions of matroid theory, referring to  \cite{BrylawskiOxley1992} for more details. Recall that a \emph{matroid} $M = (E,{\mathcal I})$ on a finite \emph{ground set} $E=E(M)$ is a collection $\mathcal{I}=\mathcal{I}(M)$ of subsets of $E$ satisfying the following properties:
\begin{enumerate}
    \item $\emptyset\in\mathcal{I}$;
    \item If $X \in {\mathcal I}$ and $Y \subseteq X$ then $Y \in {\mathcal I}$;
    \item (Exchange property) If $X, Y \in {\mathcal I}$ and $|X| > |Y|$ then there exists $e \in X \setminus Y$ such that $Y \cup e \in {\mathcal I}$.
\end{enumerate}
 The elements of ${\mathcal I}$ are called the \emph{independent sets} of the matroid.  An independent set $B$ that is maximal (under inclusion) is called a \emph{basis}, and we let ${\mathcal B}(M)$ denote the set of bases of $M$. The number of elements in any (and hence every) basis of $M$ is called the \emph{rank} of the matroid, denoted $\rk(M)$. 
 More generally, the $\emph{rank}$ of a subset $S \subseteq E$, denoted $\rk(S)$, is the maximum cardinality of an independent subset of $S$.  The \emph{restriction} of $M$ to the subset $S$ is the matroid $M|S$ on the ground set $S$ whose independent sets consist of $X\in\mathcal{I}(M)$ such that $X\subseteq S$. Thus, $\rk(S)=\rk(M|S)$. The \emph{corank} of a matroid is $g(M)=|E|-\rk(M)$.

A subset of $E$ is \emph{dependent} if it is not independent, and a minimal dependent subset is called a \emph{circuit}. That is, $C\subseteq E$ is a circuit if and only if $C$ is dependent and all proper subsets of $C$ are independent. Note that a set is dependent if and only if it contains a circuit.  The collection of circuits of $M$ determines the independent sets of $M$, and hence a matroid can be defined by specifying the collection of circuits, denoted $\cir(M)$, rather than the independent sets. The circuits of a matroid satisfy the following characteristic properties:
\begin{enumerate}
    \item $\emptyset\notin\cir(M)$;
    \item If $C,C'\in\cir(M)$ and $C\subseteq C'$, then $C=C'$;
    \item\label{item:circuit3} (Circuit elimination property) If $C,C'\in\cir(M)$ with $C\neq C'$ and $e\in C\cap C'$, then there exists $D\in\cir(M)$ such that $D\subseteq (C\cup C')\setminus e$.
\end{enumerate}
In fact, we can replace property~(\ref*{item:circuit3}) with
\begin{enumerate}
    \item[(\ref*{item:circuit3})$^\prime$]  (Strong circuit elimination property) If $C,C'\in\cir(M)$ with $e\in C\cap C'$ and $f\in C\setminus C'$, then there exists $D\in\cir(M)$ such that $f\in D\subseteq (C\cup C')\setminus e $.
    
\end{enumerate}

If $B$ is a basis of a matroid $M$ and $e \in E\setminus B$, one can check that there exists a unique circuit $C$ with the property that $C \subseteq B \cup e$. The circuit $C = C(e,B)$ is called a \emph{fundamental circuit} with respect to $B$.

A \emph{loop} is an element $e\in E$ contained in no basis.  Equivalently, $\rk(e)=0$. A \emph{bridge} (or \emph{coloop}) is an element $e\in E$ contained in every basis.  Equivalently, $\rk(X\cup e) = \rk(X)+1$ for all subsets $X\subseteq E\setminus e$.  An element is a bridge precisely when it is not contained in any circuit.  A subset $K \subseteq E$ is a \emph{cycle} if it can be expressed as the union of circuits. Equivalently, $K$ is a cycle if and only if the restriction $M|K$ contains no bridges. 

Given a matroid $M = (E, {\mathcal I})$, the \emph{dual matroid} $M^*$ has ground set $E$ and bases given by the complements of the bases of $M$, so that ${\mathcal B}(M^*) = \{E \setminus B:B \in {\mathcal B}(M)\}$. 
Two matroids are \emph{isomorphic} if there is a bijection of their ground sets
inducing a bijection of independent sets.

We will also need the notions of deletion, contraction, and matroid minors. Suppose $S\subseteq E$. The \emph{deletion} $M\setminus S$ is the matroid on ground set $E \setminus S$, with independent sets $\{I \subseteq E \setminus S: I \in {\mathcal I}\}$.  If $S \subseteq E$ is an independent set, the \emph{contraction} $M/S$ is the matroid, again on ground set $E \setminus S$, with independent sets $\{I \subseteq E \setminus S: I \cup S \in {\mathcal I}\}$. A matroid $N$ is a \emph{minor} of the matroid $M$ if it can be constructed from $M$ by a sequence of deletion and contraction operations.

An important example of a matroid,
particularly relevant for us, comes from graph theory.  If $G$ is a finite graph
with vertex set $V(G)$ and edge set $E(G)$ (possibly with loops and multiple
edges), one defines the \emph{cycle matroid} $M(G)$ with ground set
$E = E(G)$ and independent sets given by forests (collections of edges that do
not contain a cycle). If $G$ is connected, the bases are spanning trees of $G$,
and hence the rank of $M(G)$ is given by $|V(G)| - 1$.
A \emph{graphic matroid} is a matroid that is isomorphic to the cycle matroid of a graph.  A matroid is \emph{cographic} if it is the dual of a graphic matroid.
We will often refer to the cycle matroid $M(G)$ simply as $G$.

If $M = M(G)$ is the cycle matroid of a graph $G$, then the deletion $M\setminus S$ is the cycle matroid on the graph obtained by removing from $G$ the elements (edges) in $S$. The contraction $M/S$ is the cycle matroid on the graph obtained by contracting each $e \in S$, i.e., removing $e = \{v,w\}$ and identifying $v$ and $w$ to be a single vertex. Note that any edge incident to either $v$ or $w$ in $G$ becomes incident to this new vertex.

Graphic and cographic matroids are special cases of linear matroids.  Let $E$ be a finite collection of vectors in some vector space $V$ over a field $F$.  To form the \emph{vector matroid} on $E$, declare subsets of $E$ to be independent sets of the matroid if they are linearly independent in $V$.  A \emph{linear matroid} is a matroid that is isomorphic to a vector matroid. A matroid is \emph{regular} if it can be realized as a vector matroid over any field $F$.
    
\subsection{Tutte polynomials and \texorpdfstring{$h$}{h}-vectors}
The collection of independent sets of a matroid forms a \emph{simplicial complex} $\Ind(M)$ called the \emph{independence complex} of $M$. Note that if $\rk(M) = d$, then $\Ind(M)$ has dimension $d-1$. Associated to any simplicial complex of dimension $d-1$ is its \emph{f-vector} $f = (f_{-1}, f_0, \dots, f_{d-1})$, where $f_{i-1}$ is the number of simplices of cardinality $i$.
 The $h$-vector of the independence complex of $M$ (which we will simply refer to as the \emph{$h$-vector of $M$}) encodes the same information as $f$ in a form that is more convenient, especially in algebraic contexts. 
 
 In particular, one can define the entries of $h = (h_0, \dots, h_d)$ according to the linear relation
\[\sum_{i=0}^d f_{i-1}(t-1)^{d-i} = \sum_{k=0}^d h_kt^{d-k}.\]
\noindent
The $h$-vector of a simplicial complex is related to a presentation of the Hilbert function of its Stanley-Reisner (face) ring, and in the case of matroids encodes combinatorial data regarding any shelling of its independence complex.  

One can also recover the $h$-vector of $M$ as an evaluation of its Tutte polynomial. For this, recall that if $M$ is a matroid on ground set $E$, the \emph{Tutte polynomial} of $M$ is the bivariate polynomial 
\[T_M(x,y) = \sum_{S \subseteq E} (x-1)^{\rk(M)-\rk(S)}(y-1)^{|S|-\rk(S)}.\]
Recall that $\rk(S)$ is the rank of the set $S$, as defined above. If one evaluates this expression at the value $y = 1$, we see that the term corresponding to $S$ is zero whenever $S$ is not independent. In addition, if $S$ is independent, we have $|S| = \rk(S)$. Since $\rk(M) = d$, we get 
\[T_M(x,1) = \sum_{I \in {\mathcal I}} (x-1)^{\rk(M)-|I|} = \sum_{i=0}^d f_{i-1}(x-1)^{d-i} = \sum_{k=0}^d h_kx^{d-k}.\]

We conclude that $T_M(x,1)$ is a polynomial in $x$ whose coefficients are the entries of the $h$-vector (in reverse order).
Also recall that if $M^*$ is the matroid dual to $M$, we have $T_{M^*}(x,y) = T_M(y,x)$. From this we see that $T_M(1,y)$ is a polynomial that recovers the $h$-vector of the dual $M^*$.

The following two properties, crucial for our results, determine the Tutte polynomial:
\begin{enumerate}
    \item If $M$ consists solely of bridges and loops, then $T_M(x,y)=x^by^{\ell}$ where $b$ is the number of bridges and $\ell$ is the number of loops.  In particular, if $E=\emptyset$, then $T_M(x,y)=1$.
    \item If $e\in E$ is neither a bridge nor loop, then $T_M(x,y)=T_{M\setminus e}(x,y)+T_{M/e}(x,y)$.
\end{enumerate}

\subsection{Multicomplexes and (pure) O-sequences}
We next review the notion of O-sequences and purity involved in the statement of Stanley's conjecture.
Recall that a \emph{multicomplex} $\Delta$ on a ground set $C = \{c_1, c_2, \dots, c_g\} $ is a finite collection of multisets of elements from $C$ that is closed under taking subsets: if $\sigma \in \Delta$ and $\tau \subseteq \sigma$, then $\tau \in \Delta$.  A multiset in $\Delta$ of cardinality $i$ (counting multiplicities) is called an $i$-face of $\Delta$.
The multisets in $\Delta$ which are maximal under inclusion are the \emph{facets} of $\Delta$.  A multicomplex is \emph{pure} if all its facets have the same cardinality (counting multiplicities).  The \emph{$f$-vector} of $\Delta$ is the sequence of positive integers $(f_{-1}, f_0, \dots, f_{d-1})$ where $f_i$ is the number of $i$-faces of $\Delta$ with cardinality $i+1$ and $d$ is the maximal cardinality of an element of $\Delta$.  A sequence of positive integers is said to be an \emph{O-sequence} if it is the $f$-vector of some multicomplex $\Delta$. The sequence is a \emph{pure O-sequence} if $\Delta$ can be chosen to be a \emph{pure} multicomplex.

We will always identify a multicomplex with a list of integer vectors as follows. Fix a ground set ordering $C=(c_1,\dots,c_g)$ for $\Delta$. Then the \emph{multiplicity vector} for a multiset $\alpha\in\Delta$ is the nonnegative integer vector $m(\alpha):=(a_1,...,a_g)\in\N^g$ where $a_i$ is the multiplicity of $c_i$ in $\alpha$,  yielding a bijection $m\colon\Delta\to\widetilde{\Delta}\subset\N^g$. Define a partial ordering on the set of nonnegative integer vectors $\N^g$ by comparing components: $a\leq b$ if $a_i\leq b_i$ for $i=1,\ldots,g$. The property of $\Delta$ of being closed under taking subsets translates to $\widetilde{\Delta}$ being an \emph{order ideal} in $\N^g$, that is, if $b\in\widetilde{\Delta}$ and $a\leq b$, then $a\in\widetilde{\Delta}$. Define the \emph{degree} of $a\in\N^g$ by $\deg(a)=\sum_{i=1}^ga_i$ and the \emph{degree vector} for $\widetilde{\Delta}$ by $d(\widetilde{\Delta}):=(d_0,d_1,\dots,d_s)$ where $d_i$ is the number of elements of $\widetilde{\Delta}$ of degree $i$ and $s$ is the maximal degree of an element of $\widetilde{\Delta}$.  Hence, the degree vector of $\widetilde{\Delta}$ is the $f$-vector of $\Delta$.  The multicomplex $\Delta$ is pure precisely when each maximal element of $\widetilde{\Delta}$ has the same degree.

The motivation for Conjecture \ref{conj:Stanley} comes from Stanley's work on Cohen-Macaulay complexes~\cite{StanleyCM}. Suppose $\Delta$ is a finite $(d-1)$-dimensional simplicial complex on vertex set $\{1,2, \dots,n\}$, and for a fixed field $\mathbbm{k}$, let $R = \mathbbm{k}[x_1, \dots, x_n]$ denote the polynomial ring over $\mathbbm{k}$ whose variables are the vertices of $\Delta$. One defines the \emph{Stanley-Reisner ideal} $I_\Delta \subset R$ to be the ideal generated by squarefree monomials $\{{\bf x}^a: a \notin \Delta\}$  corresponding to the nonfaces of $\Delta$. The quotient $R_\Delta=R/I_\Delta$ is called the \emph{Stanley-Reisner ring} of $\Delta$.

Stanley shows that the Krull dimension of $R_\Delta$ is $d$, and that the Hilbert function $H(R_\Delta; t)$ is given by 
\[H(R_\Delta; t) = \frac{\sum h_i t^i}{(1-t)^d},\]
where the $h_i$ are the entries of the $h$-vector of $\Delta$. It follows from this (and a result of Stanley that he traces back to Macaulay) that if $\Delta$ is a \emph{Cohen-Macaulay} complex, then its $h$-vector is an O-sequence. One can characterize the Cohen-Macaulay property algebraically (namely that $R_\Delta$ has depth equal to $d$), and also topologically: For all faces $F \in \Delta$ (including $F = \emptyset$) we have $\tilde H_i(\lk F; \mathbbm{k}) = 0$ whenever $i \neq \dim \lk F$. Here $\lk F := \{G \in \Delta: F \cap G = \emptyset, F \cup G \in \Delta\}$ is the \emph{link} of $F$ in $\Delta$. 
 
 It is known that the independence complex of a matroid is Cohen-Macaulay, and hence its $h$-vector is an O-sequence. As matroids constitute a special class of Cohen-Macaulay complexes (for example, they are vertex decomposable), one expects that their $h$-vectors should enjoy further properties.
 
\section{Cycle systems}
\label{sec: cycle systems}

In this section we provide the definition of a cycle system on a matroid, and discuss several examples.  The construction depends on the notion of the \emph{unique union operator}, denoted $\ast$, which takes a collection of
subsets of some fixed set and returns the elements that appear in exactly one of the subsets.  More precisely we have the following.

\begin{defn}\label{defn:uniqueunion}
Let $S$ be a set, and suppose~$\mathcal{A}=\{A_{1},\ldots,A_{g}\}$, where each~$A_{i} \subseteq S$.  For each~$\sigma\subseteq[g]$, the \emph{unique union} of the subset $\textstyle\{A_{i}:i\in
\sigma\}$ of~$\mathcal{A}$, denoted~$\mathcal{A}_{\sigma}$, is the set of elements of~$S$ that appear in exactly one
of the~$A_{i}$:
\[
  \mathcal{A}_{\sigma}:=\ast\{A_{i}:i\in
    \sigma\}:=\{e\in\textstyle\bigcup_{i\in\sigma}A_{i}:
  \text{there exists a unique~$i\in \sigma$ such that~$e\in A_{i}$}\}.
\]
\end{defn}

One can see that the unique union has the following properties:
\begin{itemize}
  \item If $i\in\tau \subseteq \sigma\subseteq[g]$, then $\mathcal{A}_{\sigma}\cap
    A_{i}\subseteq\mathcal{A}_{\tau}\cap A_{i}$.
  \item If~$\sigma,\tau \subseteq [g]$, then
    $\mathcal{A}_{\sigma\cup\tau}\subseteq \mathcal{A}_{\sigma}\cup \mathcal{A}_{\tau}$. 
    \item If $\mathcal{S},\mathcal{T}\subseteq\mathcal{A}$, then $\ast(\mathcal{S}\cup\mathcal{T})\subseteq \ast(\mathcal{S}\cup\{\ast \mathcal{T}\})$.
\end{itemize}
However, note that if~$A,B,C$ are sets with $C\subseteq B$, it is not necessary
that one of $\ast\{A, B\}$ or $\ast\{A, C\}$ is contained in the other.  For
example, let~$A = \{1,2\}$, $B=\{2,3,4\}$, and~$C=\{3\}$.  Then~$\ast\{A, B\} =
\{1,3,4\}$ and $\ast\{A,C\}=\{1,2,3\}$. 

\begin{remark}
The unique union of two sets is their symmetric difference: $\ast\{A,B\}=A\Delta
B$.  When applied to more than two sets, the unique union is still a subset of the symmetric difference, but not generally equal. For example, if $A=\{1,2\}$, $B=\{1,3\}$,
and~$C=\{1,4\}$, then
\[
 \ast\{A,B,C\}=\{2,3,4\}\neq A\Delta B\Delta C :=(A\Delta B)\Delta
 C=\{1,2,3,4\}.
\]
Put another way, $\{2,3,4\}=\ast\{A, B, C\}\subsetneq\ast\{\ast\{A, B\},C\}=\{1,2,3,4\}$. 
\end{remark}

\begin{defn}\label{defn:cyclesystem}
 A \emph{cycle system} for a matroid~$M$ is a collection of cycles $\cs = \{C_{1},\ldots,C_g\}$ such that
 \begin{enumerate}
   \item $g = g(M)$, and
   \item\label{item:unique union property} for each $\emptyset\neq\sigma\subseteq[g]:=\{1,\ldots,g\}$, the unique union
     $\cs_{\sigma}$ is dependent.
 \end{enumerate}
   A \emph{circuit system} is a cycle system consisting of circuits, and a
   \emph{fundamental circuit system} is a cycle system whose elements are the
   fundamental circuits with respect to a base for~$M$.
\end{defn}

\begin{remark}
     By Corollary~\ref{cor:circuit systems}, below, a matroid has a cycle system if and only if it has a circuit system. Nevertheless, we allow non-circuit cycle systems since they may arise naturally through the operations of contraction essential for our results (see~Proposition~\ref{prop:del-con cs}).
\end{remark}

In general,  say that a collection $\mathcal{A}=\{A_1,...,A_k\}$ of subsets of $M$ has the \emph{unique union property} if the unique union of each nonempty subset of $\mathcal{A}$ is a dependent set.
The next result implies that a cycle system is a maximal set of cycles satisfying the unique union property.

\begin{prop}\label{prop:max size for cs}
    Let $\mathcal{A}=\{A_1,...,A_k\}$ be a collection of subsets of $M$ with the unique union property. Then $k\leq g$.
\end{prop}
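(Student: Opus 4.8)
The plan is to argue by induction on the corank $g = g(M)$. For the base case $g = 0$ we have $\rk(M) = |E(M)|$, so $E(M)$ is independent and $M$ has no nonempty dependent subsets whatsoever; since the unique union property forces each $A_i$ to be dependent (this is the case $\sigma = \{i\}$), the collection $\mathcal{A}$ must be empty, and $k = 0 \le 0 = g$.

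For the inductive step, assume $g \ge 1$ and that the proposition holds for every matroid of corank strictly smaller than $g$; we may assume $k \ge 1$, as otherwise there is nothing to prove. Applying the unique union property to $\sigma = [k]$, the set $\mathcal{A}_{[k]}$ is dependent and hence contains a circuit $C$. Choose any element $e \in C$. Since $e$ lies in a circuit it is not a bridge, so $\rk(M \setminus e) = \rk(M)$ and therefore $g(M \setminus e) = g - 1$. Moreover $e \in \mathcal{A}_{[k]}$, so by the very definition of the unique union operator $e$ belongs to exactly one of the sets $A_i$; relabeling, we may assume $e \in A_k$ and $e \notin A_i$ for every $i < k$.

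I claim that $\mathcal{A}' := \{A_1, \dots, A_{k-1}\}$ is a collection of $k - 1$ distinct subsets of $E(M) \setminus e = E(M \setminus e)$ having the unique union property with respect to $M \setminus e$. Distinctness is inherited from $\mathcal{A}$, and $A_i \subseteq E(M) \setminus e$ for $i < k$ by the choice of $e$. For the unique union property, fix $\emptyset \ne \sigma \subseteq [k-1]$: no $A_i$ with $i \in \sigma$ contains $e$, so the unique union of $\{A_i : i \in \sigma\}$ computed inside $\mathcal{A}'$ is exactly $\mathcal{A}_\sigma$, a subset of $E(M) \setminus e$ that is dependent in $M$; since $M \setminus e$ has precisely the same dependent subsets of $E(M) \setminus e$ as $M$, it is also dependent in $M \setminus e$. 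Applying the inductive hypothesis to $M \setminus e$ (of corank $g - 1$) and the collection $\mathcal{A}'$ of size $k - 1$ gives $k - 1 \le g - 1$, i.e.\ $k \le g$, completing the induction.

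The one point that requires care is the choice of $e$: it must be taken \emph{inside a circuit} contained in $\mathcal{A}_{[k]}$, rather than merely inside $\mathcal{A}_{[k]}$, so that deleting $e$ genuinely lowers the corank; and the defining property of the unique union operator is exactly what ensures that such an $e$ lies in only one of the $A_i$, which is what lets us discard a single set from the collection while dropping the corank by one. I do not anticipate any deeper obstacle; in particular, note that one cannot substitute contraction for deletion here, since contracting a non-loop element leaves the corank unchanged.
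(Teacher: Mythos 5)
Your proof is correct and follows essentially the same route as the paper's: both arguments induct on the corank, pick a non-bridge element $e$ of $\mathcal{A}_{[k]}$ (which by the definition of the unique union lies in exactly one $A_i$), and pass to $M\setminus e$ with the collection $\{A_1,\dots,A_{k-1}\}$. Your write-up just spells out the details (why $e$ is a non-bridge, why the unique unions and dependence are preserved in $M\setminus e$) that the paper leaves implicit.
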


\begin{proof}
If $g=0$, then $M$ has no dependent sets, and thus $k=0\leq g$.  If $k,g\geq1$, then $\mathcal{A}_{[k]}$ is dependent and thus contains a non-bridge element $e$. Without loss of generality, suppose that $e\in A_k$.  Then $\{A_1,...,A_{k-1}\}$ is a set of subsets of $M\setminus e$ with the unique union property, and the corank of $M\setminus e$ is $g-1$.  The result follows by induction.
\end{proof}

\subsection{Examples and remarks} \label{sec:examples}
\begin{enumerate}[leftmargin=*]
    \item If $G$ is a graph with vertices $\{v_0,v_1,\dots,v_n\}$, then the cographic matroid $M(G)^*$ has ground set $E(G)$ and circuits given by the minimal edge-cuts of $G$ (these are called \emph{cocircuits} of $G$). If $C_i$ is the set of edges of $G$ incident to vertex $v_i$, then $\{C_1,\dots,C_n\}$ forms a circuit system for $M(G)^*$. 

    \item\label{item:cone system} If $G$ is a graph embedded in the plane, then the collection of circuits defined by its bounded faces forms a circuit system for the graphic matroid $M(G)$. Indeed, these sets are the same as the cocircuits of the dual graph $G^*$ defined by the cuts determined by the corresponding vertices. We have seen above that this collection forms a cycle (in fact, circuit) system. This was the motivating example discussed in the introduction (see Figure~\ref{fig:intro}).
     \item\label{example:cone} If $G$ is a graph on $n$ vertices $\{v_1,\dots,v_n\}$, then the \emph{cone} over $G$ is the graph $\textrm{Cone}(G)$ obtained by adding a vertex $v$ together with an edge $f_i$ from $v$ to each $v_i$.  We call $\textrm{Cone}(G)$ a \emph{coned} graph with \emph{cone vertex}~$v$. For each edge $e=v_iv_j$ in the original graph, consider the circuit $C_e=\{f_i,e,f_j\}$ passing through the new vertex $v$. Then the collection $\cs=\{C_e \, | \, e\in E(G)\}$ provides a circuit system for $M(\textrm{Cone}(G))$. Indeed, the corank of $\textrm{Cone}(G)$ is $|E(G)|$, so $\cs$ has the correct size. Now suppose that $\sigma\subseteq E(G)$ is a nonempty subset of edges, and view $\sigma$ as a subgraph of $G$. If $\sigma$ contains a circuit $C$, then $C\subseteq \cs_\sigma$, which is therefore dependent. On the other hand, if $\sigma$ is acyclic, then choose a path $P$ in $\sigma$ between two leaf vertices $v_i$ and $v_j$ of $\sigma$. Then the unique union $\cs_\sigma$ contains the circuit $\{f_i,P,f_j\}$, and is therefore dependent.
     \item A computer search shows the bipartite graph $K_{3,3}$ has no circuit system. By Corollary~\ref{cor:K33-free}, every graph that is $K_{3,3}$-free has a cycle system. However, the converse does not hold: $K_6$ is the cone over $K_5$ and hence has a cycle system, but $K_{3,3}$ appears as a minor of $K_6$, obtained by deleting edges. 
     \item The uniform matroid $U(2,4)$ has no cycle system. Recall that this is the rank-2 connected matroid with ground set $[4]$ and circuits consisting of the four subsets of size $3$. Any circuit system would consist of two distinct circuits, and by symmetry we can assume that these are $C_1=\{2,3,4\}$ and  $C_2=\{1,3,4\}$. Then the unique union $\ast\{C_1, C_2\}=\{1,2\}$ is independent, showing that these do not form a circuit system. By connectivity (Theorem \ref{thm:circuits}), $U(2,4)$ has no cycle system. In fact, most uniform matroids $U(m,n)$ have no cycle systems. Indeed, cycle systems exist only when $m\in\{0,1,n-1,n\}$, as can be established via induction using Proposition~\ref{prop:del-con cs}. Note that a matroid $M$ is binary if and only if it is $U(2,4)$-minor-free.
     \item Every example of a matroid with a cycle system that we know of is regular. In particular, we have no example of a non-binary matroid with a cycle system (see Conjecture~\ref{conj: nonbinary}).
     \item A connected graph is \emph{biconnected} if it has no \emph{cut-vertex}, i.e., a vertex whose removal, along with incident edges, disconnects the graph. A \emph{biconnected component} of a graph is a maximal biconnected subgraph.  Every graph can be uniquely decomposed into biconnected components, with each connected component of the graph consisting of biconnected components attached via cut-vertices.  A graph has a circuit system if and only if  
      each of its biconnected components has a circuit system (see~Corollary~\ref{cor:circuit systems}). 
      \item\label{item:example-subdivision} 
      Suppose a graph $G'$ is obtained from a graph $G$ by \emph{subdividing an edge}, i.e., by replacing an edge of $G$ with a path of three or more vertices. The cycles of $G'$ and $G$ are in bijection, and $G'$ has a circuit system if and only if $G$ has a circuit system. (For instance, a cycle graph can be obtained by subdividing the single edge of a loop.) Similarly, removing vertices of degree one preserves both the cycle structure and the existence of circuit systems. Hence, for the purpose of finding circuit systems, one may assume without loss of generality that all vertices have degree at least three (if we allow for graphs with multiple edges and loops).
     \item See the \hyperref[appendix]{Appendix} for further discussion of circuit systems on graphs with at most eight vertices and for examples of matroids with circuit systems but no fundamental circuit system.
     \item\label{item:cs-finding algorithm} (\textsc{circuit system search algorithm}.) Let $M$ be a matroid of corank~$g$ with circuit-set $C(M)$.  For each $k\geq 1$, let $L_k$ be the collection of all $k$-element subsets of $C(M)$ with the unique union property. The elements of $L_g$ are exactly the circuit systems for $M$.  The algorithm we use to find circuit systems successively builds the $L_k$ starting with $L_1=\{\{C\}:C\in C(M)\}$.  Given $L_k$ one may build $L_{k+1}$ as follows. For each $A=\{C_1,...,C_k\}\in L_k$, take $C\in C(M)\setminus A$.  We must decide whether $A':=A\cup\{C\}$ has the unique union property.  To do that, first check whether $\ast (A')$ is dependent.  Then, for each $i$, check whether $\ast(A'\setminus \{C_i\})$ is in $L_k$.  These latter steps do not require re-checking the dependence condition, and we only need store $L_k$ to find $L_{k+1}$. Also, once $k$ gets close to $g$, the set $L_k$ is typically much smaller than the collection of all $k$-element subsets of $C(M)$.  One side effect of this algorithm is that one finds \emph{all} circuit systems almost as quickly as finding a single one (which one would do by stopping after finding the first element of $L_g$). 
\end{enumerate}

\subsection{Properties}

In this section we establish various properties of cycle systems. In what follows, we let~$M$ be a matroid with cycle system $\cs = \{C_{1},\ldots,C_g\}$. Our first observation is that such structures behave well with respect to certain deletions and all contractions.

\begin{prop}\label{prop:del-con cs}\ 
\leavevmode  
\begin{enumerate}
  \item\label{item:del-con cs 1} Let $e\in\cs_{[g]}\cap C_i$.  Then 
    \[
    \cs'=\cs\setminus\{C_{i}\}= \{C_1,\dots,C_{i-1},\widehat{C_i}, C_{i+1}, \dots, C_g\}    \]
    is a cycle system for $M\setminus e$.
  \item\label{item:del-con cs 2} Let $e$ be any non-loop of $M$.  Then 
    \[
      \cs'':=\{C_1\setminus e,...,C_g\setminus e\}
    \]
    is a cycle system for $M/e$.
\end{enumerate}
\end{prop}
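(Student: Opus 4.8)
The plan is to verify, for each part, the three defining conditions of a cycle system (Definition~\ref{defn:cyclesystem}): that the collection has cardinality equal to the corank of the relevant minor, that each of its members is a cycle of that minor, and that every nonempty unique union is dependent in it. Part~(1) is essentially bookkeeping, while Part~(2) needs one small fact about how coloops behave under contraction, which I expect to be the crux.

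For Part~(1), I would first note that $e$ lies in the dependent set $\cs_{[g]}$, hence in some circuit of $M$, so $e$ is not a bridge; thus $\rk(M\setminus e)=\rk(M)$ and $g(M\setminus e)=g-1=|\cs'|$. Since $e\in\cs_{[g]}$, it appears in exactly one member of $\cs$, which must be $C_i$, so $e\notin C_j$ for every $j\ne i$; then $C_j\subseteq E(M\setminus e)$ and $(M\setminus e)|C_j=M|C_j$ is bridge-free, so each such $C_j$ remains a cycle. Finally, for nonempty $\sigma\subseteq[g]\setminus\{i\}$ one has $\cs'_\sigma=\cs_\sigma$, a subset of $E(M\setminus e)$ that contains a circuit of $M$; that circuit is also a circuit of $M\setminus e$, so $\cs'_\sigma$ is dependent in $M\setminus e$. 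That completes Part~(1).

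For Part~(2), the one non-formal ingredient I would isolate up front is the following: \emph{if $N$ is a matroid, $e$ a non-loop of $N$, and $x\in E(N)\setminus e$, then $x$ is a coloop of $N/e$ if and only if $x$ is a coloop of $N$}; this drops out of the rank identity $\rk_{N/e}(S)=\rk_N(S\cup e)-1$ (valid because $e$ is a non-loop), applied with $S=E(N)\setminus e$ and $S=E(N)\setminus\{e,x\}$. Granting this, I check the three conditions. \emph{Cardinality}: $\rk(M/e)=\rk(M)-1$ and $|E(M/e)|=|E(M)|-1$ give $g(M/e)=g$, and the sets $C_j\setminus e$ are pairwise distinct, since $C_j\setminus e=C_k\setminus e$ with $j\ne k$ would force (the $C_j$ being distinct) exactly one of $C_j,C_k$ to contain $e$, whence $\cs_{\{j,k\}}=C_j\Delta C_k=\{e\}$ is dependent, i.e.\ $e$ is a loop --- a contradiction; so $|\cs''|=g$. \emph{Cycle condition}: setting $K=(C_j\setminus e)\cup\{e\}$, one has $(M/e)|(C_j\setminus e)=(M|K)/e$, and since $C_j\subseteq K$ and $C_j$ is a cycle of $M$, every $x\in C_j\setminus e$ lies in a circuit of $M$ contained in $C_j\subseteq K$, so $x$ is not a coloop of $M|K$ and hence, by the isolated fact, not a coloop of $(M|K)/e$; thus $(M/e)|(C_j\setminus e)$ is bridge-free and $C_j\setminus e$ is a cycle of $M/e$. \emph{Unique unions}: from the definition of $\ast$ one checks $\cs''_\sigma=\cs_\sigma\setminus e$ for every nonempty $\sigma$; since $\cs_\sigma$ is dependent in $M$ it contains a circuit $D$ of $M$, and then either $e\notin D$, so $D\subseteq\cs_\sigma\setminus e$ is still dependent in $M/e$ (contracting an element outside a dependent set preserves dependence), or $e\in D$, so the nonempty set $D\setminus e\subseteq\cs_\sigma\setminus e$ is dependent in $M/e$ (in fact a circuit of $M/e$); either way $\cs''_\sigma$ is dependent in $M/e$.

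The step I expect to be the main obstacle is the cycle condition in Part~(2): a priori, contracting $e$ could turn some element of $C_j\setminus e$ into a coloop, which would break the claim, and ruling this out is precisely the content of the coloop-under-contraction fact --- the only piece of the argument that is not a direct unwinding of definitions together with the standard deletion/contraction rank formulas. A secondary subtlety is the distinctness of the $C_j\setminus e$, which is the one place where the unique-union axiom gets invoked for two-element index sets.
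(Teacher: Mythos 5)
Your proof is correct and follows essentially the same route as the paper's: part~(1) is the same bookkeeping, and part~(2) verifies cardinality, the cycle condition, and dependence of unique unions in the same order, with the crux in both cases being that contracting a non-loop creates no new coloops. The only real difference is in how that crux is established and localized: the paper shows that non-bridges of $M$ remain non-bridges of $M/e$ via a basis-exchange argument stated for the ambient matroid, whereas you prove the contrapositive coloop statement from the rank identity and, more carefully, apply it to the restriction $(M/e)|(C_j\setminus e)=(M|K)/e$ (which is what the cycle condition actually requires, since being a non-bridge of $M/e$ globally is weaker than being a non-bridge of the restriction); you also check distinctness of the sets $C_j\setminus e$, a detail the paper leaves implicit.
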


\begin{proof} (\ref{item:del-con cs 1}) First note that the elements
  of~$\cs\setminus\{C_{i}\}$ are cycles of $M\setminus e$
  since~$e\not\in C_{j}$ for~$j\neq i$.  Next, for each
  nonempty~$\sigma\subset[g]\setminus{i}$, we
  have~$\cs'_{\sigma}=\cs_{\sigma}$, which is dependent
  in~$M\setminus e$.  The cardinality of~$\cs'$ is correct
  since~$g(M\setminus e)=g(M)-1$.  
  \smallskip

  \noindent(\ref{item:del-con cs 2}) We first verify that $\cs''$ consists of cycles.  Let $f\in
  E(M)\setminus\{e\}$ be a non-bridge element of~$M$.  It suffices to show
  that~$f$ is a non-bridge of~$M/e$. Since~$f$ is a non-bridge of~$M$,
  there exists a basis~$B$ for~$M$ not containing~$f$. If~$e\in B$,
  then~$B\setminus\{e\}$ is a basis for $M/e$ not containing~$f$,
  and hence $f$ is a non-bridge of~$M\setminus e$.  Otherwise, since~$e$ is not
  a loop, there exists a basis~$B'$ for~$M$ containing~$e$.  By the basis
  exchange property of matroids, there exists~$x\in B$ such
  that~$B'':=(B\setminus\{x\})\cup\{e\}$ is a basis for~$M$.
  Then~$B''\setminus\{e\}$ is a basis for~$M/e$ not containing~$f$.
  Thus,~$f$ is not a bridge of~$M/e$.

  Now let $\emptyset\neq\sigma\subseteq[g]$. Since $\cs$ is a cycle
  system on $M$, we know $\cs_\sigma$ contains a cycle $C$. It follows
  $\cs_\sigma''\supseteq C\setminus e$, and $C\setminus e$ is dependent
  in $M/e$ since $(C\setminus e)\cup \{e\}$ is dependent in $M$. The cardinality
  of $\cs''$ is correct since the fact that~$e$ is a non-loop implies
  $g(M/e)=g(M)$.
\end{proof}

\begin{prop}\label{prop:every circuit}
  Let~$D$ be a circuit of~$M$.  Then there exists~$\sigma\subseteq[g]$ such
  that~$\cs_{\sigma}=D$.
\end{prop}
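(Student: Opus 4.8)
The plan is to induct on the corank $g = g(M)$, using the deletion/contraction operations of Proposition~\ref{prop:del-con cs} to shrink the matroid while keeping track of the circuit $D$. First I would dispose of the trivial base case: if $g = 0$ then $M$ has no circuits, so there is nothing to prove; if $g = 1$ then $\cs = \{C_1\}$ and every circuit of $M$ must be contained in $C_1$ (since $C_1$ is the unique cycle up to the unique-union relation — more carefully, $\cs_{\{1\}} = C_1$ is the only available unique union, and I would argue any circuit is either equal to it or forces a contradiction with $g=1$). The real work is the inductive step.

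For the inductive step, I would split into two cases according to whether the given circuit $D$ is contained in the "full" unique union $\cs_{[g]}$ or not. If $D \cap \cs_{[g]} = \emptyset$ is false, i.e. there is an element $e \in D \cap \cs_{[g]}$, then $e$ lies in exactly one $C_i$; by Proposition~\ref{prop:del-con cs}(\ref{item:del-con cs 1}), $\cs' = \cs \setminus \{C_i\}$ is a cycle system for $M \setminus e$. But $D \not\subseteq E(M\setminus e)$, so this is the wrong direction — instead I should contract. So the better first move: pick $e \in D$ and consider $M/e$ with cycle system $\cs'' = \{C_1 \setminus e, \dots, C_g \setminus e\}$ (Proposition~\ref{prop:del-con cs}(\ref{item:del-con cs 2}); note $e$ is not a loop since it lies in the circuit $D$). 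In $M/e$, the set $D \setminus e$ is a circuit, so by induction there is $\sigma \subseteq [g]$ with $\cs''_\sigma = D \setminus e$, i.e. $\ast\{C_i \setminus e : i \in \sigma\} = D \setminus e$. The remaining task is to upgrade this to a statement in $M$: I must show that $\cs_\sigma$, the unique union taken in $M$ without deleting $e$, equals $D$ exactly — not just $D$ minus or plus $e$. Here is where the unique-union bookkeeping matters: $\cs_\sigma$ and $\cs''_\sigma$ can differ only in whether $e$ itself is present, so $\cs_\sigma \in \{D \setminus e, \ D\}$ depending on the parity of $|\{i \in \sigma : e \in C_i\}|$. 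If $e$ appears in exactly one $C_i$ with $i \in \sigma$, then $\cs_\sigma = D$ and we are done; otherwise $\cs_\sigma = D \setminus e$, which is independent (a proper subset of the circuit $D$), contradicting property~(\ref{item:unique union property}) of the cycle system $\cs$. That contradiction is the crux — it pins down the parity for free.

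Wait — I need $D \setminus e$ to be nonempty for the induction to apply to a genuine circuit, but since $D$ is a circuit and $e \in D$, $D \setminus e$ is independent and could be empty only if $D = \{e\}$, i.e. $e$ is a loop; but $e$ lies in no loop-circuit here unless $D$ itself is a loop, which is a degenerate circuit of size one. I would handle that case separately: if $D = \{e\}$ is a loop, then $e$ is a non-bridge in no basis, and I must locate $\sigma$ with $\cs_\sigma = \{e\}$; the cleanest route is to note $e$ must appear in some $C_i$ (else $e$ would be a bridge, contradicting $e$ being a loop), take $\sigma$ minimal with $e \in \cs_\sigma$, and argue by minimality and the unique-union properties listed after Definition~\ref{defn:uniqueunion} that $\cs_\sigma$ cannot properly contain any other element — any extra element would itself be dependent-forcing, but a singleton unique union that is a proper superset of $\{e\}$ would be a cycle containing $e$, and shrinking $\sigma$ while retaining $e$ contradicts minimality. (I would also consider: does $D$ a loop even permit $g \geq 1$? yes, e.g. a loop plus a bridge.)

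The main obstacle I anticipate is exactly the loop/size-one circuit corner case and, more generally, making the parity argument in the inductive step airtight — i.e. rigorously justifying that $\cs_\sigma$ and $\cs''_\sigma$ agree off of $e$ and can only disagree at $e$, so that the induction hypothesis $\cs''_\sigma = D \setminus e$ together with the dependence requirement on $\cs_\sigma$ forces $\cs_\sigma = D$. The rest is routine once that dichotomy is nailed down: it follows because $e \notin C_j \setminus e$ for any $j$ while $e \in C_j \iff e \in (C_j)$, so membership of any element $f \neq e$ in the unique union is computed identically in both matroids, and only the status of $e$ itself is in question. I would present this via a short lemma-free paragraph rather than breaking out a separate claim.
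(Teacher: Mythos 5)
Your main inductive step---contract an element $e\in D$, apply the inductive hypothesis to the circuit $D\setminus e$ of $M/e$, and then use the fact that $\cs_\sigma$ and $\cs''_\sigma$ can differ only at $e$ together with the dependence requirement on $\cs_\sigma$ to force $\cs_\sigma=D$ rather than the independent set $D\setminus e$---is correct, and it is genuinely different from the paper's argument, which instead repeatedly \emph{deletes} elements of $\cs_{[g]}\setminus D$ until $\cs_{[g]}\subseteq D$ and then concludes $\cs_{[g]}=D$ by minimality of circuits. But there are two real problems. First, your induction is declared to be on the corank $g$, while contracting a non-loop element does not change the corank: Proposition~\ref{prop:del-con cs}(\ref{item:del-con cs 2}) itself records that $g(M/e)=g(M)$, and indeed your $\cs''$ still has $g$ members. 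As written, the inductive hypothesis can never be applied to $M/e$. This is fixable---induct on $|E(M)|$, which drops by one under contraction---but the proof as stated does not go through.

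Second, the loop case $D=\{e\}$ is broken in two ways. Your justification that $e$ lies in some $C_i$ (``else $e$ would be a bridge'') invokes exactly Corollary~\ref{cor:non-bridges}, which the paper \emph{deduces from} Proposition~\ref{prop:every circuit}; using it here is circular. (A non-circular route: if the loop $e$ lay in no $C_i$, then $\cs$ would be a $g$-element family with the unique union property inside $M\setminus e$, whose corank is $g-1$, contradicting Proposition~\ref{prop:max size for cs}.) More seriously, the minimality argument fails: a minimal $\sigma$ with $e\in\cs_\sigma$ can be a singleton $\{i\}$, for which $\cs_\sigma=C_i$, and $C_i$ may perfectly well be a cycle properly containing $\{e\}$ (a loop together with another circuit is a cycle); nothing you say rules this out, and ``shrinking $\sigma$ while retaining $e$'' is impossible when $\sigma$ is already a singleton. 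The clean repair is to run the paper's deletion argument in this case: if some $f\in\cs_{[g]}$ satisfies $f\ne e$, delete $f$ and recurse (again on $|E|$); otherwise $\cs_{[g]}$ is a nonempty dependent subset of $\{e\}$, hence equals $\{e\}$.
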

\begin{proof}  The proof is by induction on~$g$.  If $g=1$, then~$M$ has a
  unique circuit.  (To see this, choose a basis~$B$ for~$M$.  Since~$g=1$, we
  have~$B = E(M)\setminus\{e\}$ for some element~$e$.  Then~$B\cup\{e\}$
  contains a unique circuit---the fundamental circuit for the
  pair~$(B,e)$---and~$B\cup\{e\}=E(M)$.) So, in this case, $\cs=\{D\}$
  and~$\sigma=\{1\}$.  

  Now suppose~$g>1$. If~$\cs_{[g]}$ contains an element~$e\not\in D$,
  without loss of generality, let~$e\in C_{g}$.  Consider the cycle system
  $\cs'=\{C_{1},\ldots,C_{g-1}\}$ on the matroid~$M'=M\setminus e$
  (see~Proposition~\ref{prop:del-con cs}).  Since~$D$ is a circuit in~$M'$,
  and~$g(M')=g-1$, by induction there exists~$\sigma\subseteq[g-1]$ such
  that~$\cs'_{\sigma}=D$. The result follows in this case
  since~$\cs'_{\sigma}=\cs_{\sigma}$.  The alternative is
  that~$\cs_{[g]}\subseteq D$.  Since~$\cs$ is a cycle system,
  there exists a circuit~$C\subseteq\cs_{[g]}$. We
  have~$C\subseteq\cs_{[g]}\subseteq D$. Since~$C$ and~$D$ are
  circuits,~$C=D$, which implies~$\cs_{[g]}=D$.
\end{proof}

\begin{cor}\label{cor:non-bridges}
  An element~$e$ of $M$ is a non-bridge if and only if there exists~$i$ such
  that~$e\in C_{i}$.
\end{cor}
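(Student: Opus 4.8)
The plan is to prove Corollary~\ref{cor:non-bridges} directly from the two preceding results, together with the basic definitions. The statement has two directions, so I would organize the proof accordingly.

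For the forward direction, suppose $e \in C_i$ for some $i$. Since each $C_i$ is a cycle of $M$ (by the definition of a cycle system), it is a union of circuits, so there is a circuit $D \subseteq C_i$ with $e \in D$. An element lying in a circuit is by definition a non-bridge (this is recalled in the Preliminaries: an element is a bridge precisely when it is contained in no circuit). Hence $e$ is a non-bridge. This direction uses nothing beyond the definitions.

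For the reverse direction, suppose $e$ is a non-bridge of $M$. Then $e$ lies in some circuit $D$ of $M$. By Proposition~\ref{prop:every circuit}, there exists $\sigma \subseteq [g]$ with $\cs_\sigma = D$. Since $e \in D = \cs_\sigma$, the definition of the unique union operator $\ast$ gives that $e$ appears in exactly one of the sets $C_i$ with $i \in \sigma$; in particular $e \in C_i$ for that $i$, which is what we want. (One should note in passing that $\sigma$ must be nonempty here, since $D$ is a circuit and hence nonempty, while $\cs_\emptyset = \emptyset$.)

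I do not anticipate a genuine obstacle: the content has been front-loaded into Proposition~\ref{prop:every circuit} (itself proved by the deletion induction via Proposition~\ref{prop:del-con cs}), and this corollary is essentially an unwinding of definitions once that proposition is in hand. The only point requiring a moment's care is making sure the correspondence between ``$e$ appears in $\cs_\sigma$'' and ``$e \in C_i$ for some $i$'' is stated cleanly — the unique union is a subset of the ordinary union $\bigcup_{i \in \sigma} C_i$, so membership in $\cs_\sigma$ immediately implies membership in some $C_i$, and no subtlety about the ``exactly once'' condition is actually needed for this implication.
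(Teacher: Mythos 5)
Your proposal is correct and matches the paper's proof, which likewise derives both directions from Proposition~\ref{prop:every circuit} together with the fact that an element is a non-bridge precisely when it lies in a circuit; you have simply spelled out the details the paper leaves implicit (that each $C_i$ is a union of circuits, and that $\cs_\sigma \subseteq \bigcup_{i\in\sigma} C_i$). No gap.
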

\begin{proof}
  The result follows immediately from Proposition~\ref{prop:every circuit}
  since an element is a non-bridge if and only if it is contained in
  a circuit.  
\end{proof}

\begin{cor}\label{cor:loops} Let~$S = \cs_{[g]}\cap C_{i}$.   Then there
  exists a circuit in $C_{i}$ containing $S$.  If~$S$ contains a loop~$\ell$,
  then~$S=\{\ell\}$.
\end{cor}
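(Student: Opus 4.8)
The plan is to prove the first assertion---that some circuit contained in $C_i$ contains $S=\cs_{[g]}\cap C_i$---by induction on the corank $g=g(M)$, and then to read off the loop statement from the circuit axioms. Throughout, the key elementary observation is that an element lies in $\cs_{[g]}$ precisely when it belongs to exactly one of the cycles $C_k$; hence every element of $\cs_{[g]}\cap C_i$ lies in $C_i$ and in no other $C_k$.

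For the induction I would split according to whether $\cs_{[g]}\subseteq C_i$. If it is (this includes the base case $g=1$, where $\cs=\{C_i\}$), then $S=\cs_{[g]}$ is dependent and so contains a circuit $D$, which by Proposition~\ref{prop:every circuit} may be written $D=\cs_\sigma$ for some nonempty $\sigma\subseteq[g]$. One then checks that $i\in\sigma$ (otherwise no element of $D\subseteq\cs_{[g]}$ could lie in any $C_k$ with $k\in\sigma$, since those elements avoid every $C_k$ with $k\ne i$), and, using $i\in\sigma$ together with the same observation, that every element of $\cs_{[g]}$ lies in exactly one $C_k$ with $k\in\sigma$ and therefore belongs to $\cs_\sigma=D$. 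Thus $D=\cs_{[g]}=S$ is itself the desired circuit.

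Otherwise there is an element $e\in\cs_{[g]}\setminus C_i$; it lies in a unique $C_j$, with $j\ne i$, so Proposition~\ref{prop:del-con cs}(1) yields a cycle system $\cs'=\cs\setminus\{C_j\}$ for $M\setminus e$ of corank $g-1$, with $C_i$ still a member and unchanged. A short verification shows that every element of $S$ remains ``private'' to $C_i$ with respect to $\cs'$, i.e.\ $S\subseteq\cs'_{[g]\setminus\{j\}}\cap C_i$ (an element of $S$ avoids every $C_k$ with $k\ne i$, hence in particular every such $C_k$ with $k\in[g]\setminus\{j\}$). The inductive hypothesis then produces a circuit of $M\setminus e$---equivalently of $M$, since it avoids $e$ and lies in $C_i$---containing $\cs'_{[g]\setminus\{j\}}\cap C_i$, and hence containing $S$. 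Because deleting elements outside $C_i$ never disturbs $C_i$ and the corank strictly decreases, the recursion is well founded and must eventually land in the first case.

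For the final claim, if $\ell\in S$ is a loop then $\{\ell\}$ is a circuit, and the circuit $D\subseteq C_i$ produced above contains $\ell$; since no circuit properly contains another, $D=\{\ell\}$, so $S\subseteq D=\{\ell\}$ and therefore $S=\{\ell\}$. I expect the only real subtlety to be organizing the induction so that it bottoms out exactly at the case $\cs_{[g]}\subseteq C_i$, and then, in that terminal case, upgrading ``$\cs_{[g]}$ is dependent'' to ``$\cs_{[g]}$ is a circuit'' via Proposition~\ref{prop:every circuit}; the remaining steps are routine manipulations of the unique-union definition.
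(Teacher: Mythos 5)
Your argument is correct, but it takes a genuinely different route from the paper's. You induct on the corank, repeatedly deleting an element $e\in\cs_{[g]}\setminus C_i$ (which, by Proposition~\ref{prop:del-con cs}, removes the unique other cycle $C_j\ni e$ from the system while leaving $C_i$ intact and keeping $S$ inside the new unique union), until you reach the terminal case $\cs_{[g]}\subseteq C_i$; there you upgrade ``$\cs_{[g]}$ is dependent'' to ``$\cs_{[g]}=S$ is itself a circuit'' by writing a circuit $D\subseteq \cs_{[g]}$ as $\cs_\sigma$ via Proposition~\ref{prop:every circuit}, checking $i\in\sigma$, and deducing $\cs_{[g]}\subseteq\cs_\sigma=D$. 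The paper instead argues by contradiction in a single step: assuming no circuit of $C_i$ contains $S$, it produces $e,e'\in S$ and circuits $C,C'\subseteq C_i$ with $e\in C\setminus C'$ and $e'\in C'\setminus C$, deletes $e$ (which removes $C_i$ itself from the system), notes that $e'$ is still a non-bridge of $M\setminus e$ because $e'\in C'$, and then Corollary~\ref{cor:non-bridges} forces $e'$ to lie in some $C_k$ with $k\neq i$, contradicting $e'\in\cs_{[g]}$. Both proofs rest on Proposition~\ref{prop:del-con cs}; yours leans directly on Proposition~\ref{prop:every circuit} where the paper uses its corollary, and yours has the small bonus of exhibiting the circuit explicitly in the terminal case (it is the full unique union of the reduced system), at the cost of a longer induction. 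All the steps you flag do go through---the key point, as you say, is that every element of $S$ lies in $C_i$ and in no other cycle, so $S$ survives into $\cs'_{[g]\setminus\{j\}}\cap C_i$ after each deletion---and your treatment of the loop statement coincides with the paper's.
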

\begin{proof}
  Suppose there is no circuit in~$C_{i}$ containing~$S$.  Since~$C_{i}$ is a
  union of circuits, there exist $e,e'\in S$ and circuits
  $C,C'$ contained in~$C_{i}$ such that~$e\in C \setminus C'$ and $e'\in
  C'\setminus C$.  Consider the matroid~$\tilde{M}:=M\setminus e$ with cycle
  system $\tilde{\cs}:=\cs\setminus\{C_{i}\}$
  (see~Proposition~\ref{prop:del-con cs}). Then~$e'$ is a non-bridge
  in~$\tilde{M}$ since it is contained in $C'$, which is a circuit
  of~$\tilde{M}$.  Hence, by Corollary~\ref{cor:non-bridges}, we know $e'$ is in
  some cycle of $\tilde{\cs}$, i.e., $e'\in
  \cs_{[g]\setminus\{i\}}$, which is not true.  

  Hence, there must be a circuit~$C\subseteq C_{i}$ containing~$S$.  If~$S$
  contains a loop~$\ell$, then since $\ell$ is a circuit, we must
  have~$\{\ell\}=C$, and thus,~$S=\{\ell\}$.
\end{proof}
  
\subsection{Circuit systems}\label{subsection:circuit systems}
By definition a cycle system $\cs$ on a matroid $M$ consists of a collection of cycles. We will see that under certain circumstances these cycles must be circuits, i.e., $\cs$ is a circuit system. Furthermore, we see that if $M$ is any matroid that has a cycle system, then it in fact admits a circuit system.  This observation allows one to narrow a computer search for such structures. Recall that a matroid is \emph{connected} if for each pair of groundset elements, there is a circuit containing those elements.\footnote{Let $G$ be a loopless graph with at least three vertices and no isolated vertex.  Then $G$ is biconnected if and only if its cycle matroid $M(G)$ is connected \cite[Proposition 4.1.1]{Oxley}.}  

\begin{thm}\label{thm:circuits}
Suppose $M$ is a connected matroid with cycle system $\cs$. Then each element of $\cs$ is a circuit.
\end{thm}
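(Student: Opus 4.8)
The plan is to argue by contradiction: suppose some $C_i \in \cs$ is not a circuit, hence is a union of at least two circuits. Since $M$ is connected, every element of $M$ is a non-bridge, so by Corollary~\ref{cor:non-bridges} every element lies in some $C_j$; in particular $\cs_{[g]}$ ought to capture a lot of the structure. I would first try to reduce to the key local statement about a single $C_i$, namely the claim that $\cs_{[g]} \cap C_i$ is contained in a single circuit inside $C_i$ (this is Corollary~\ref{cor:loops}, which I may assume). The tension is that $C_i$ being a proper union of circuits means $C_i$ has "extra" elements beyond any one fundamental circuit, and I want to show connectivity forces those extra elements to already be accounted for, contradicting that $C_i$ strictly exceeds a circuit.

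The main step I would carry out: pick $e \in C_i \setminus D$ where $D \subseteq C_i$ is a circuit with $\cs_{[g]} \cap C_i \subseteq D$ (so $e$ exists precisely when $C_i \ne D$, i.e. when $C_i$ is not a circuit — here I am using that $C_i$ is a cycle, hence a union of circuits, and if it is not itself a circuit it must properly contain one). Then $e \notin \cs_{[g]}$, meaning $e$ appears in no $C_j$ for $j \ne i$ *and also* $e$ does not appear uniquely in $C_i$ — but $e$ appears only in $C_i$ among the list, so $e \in C_i$ would give $e \in \cs_{\{i\}} \subseteq \cs_{[g]}$ unless... wait, $e \in C_i$ and $e$ in no other $C_j$ means $e$ appears in exactly one of the $C_j$'s (namely $C_i$), so $e \in \cs_{[g]}$ — contradiction. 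So the real content is extracting an element of $C_i$ lying in no other $C_j$; that is where I'd focus.

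More carefully: by Corollary~\ref{cor:loops} there is a circuit $C \subseteq C_i$ containing $S := \cs_{[g]} \cap C_i$. If $C_i \ne C$, then since $C_i$ is a union of circuits strictly larger than $C$, pick $f \in C_i \setminus C$. Now I claim $f$ lies in no $C_j$ with $j \ne i$: if it did, then $f \in C_j$ for some $j \ne i$, and I would use the deletion cycle system $\cs \setminus \{C_i\}$ on $M \setminus e'$ for a suitable $e' \in S$ (as in the proof of Corollary~\ref{cor:loops}) to show $f$ is a non-bridge of that minor and hence lies in $\cs_{[g]\setminus\{i\}}$ — but I need to first ensure $f \notin S$, which holds since $S \subseteq C$ and $f \notin C$. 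So $f \in C_i$ and $f \notin C_j$ for all $j \ne i$, hence $f$ appears in exactly one element of $\cs$, so $f \in \cs_{\{i\}} \subseteq \cs_{[g]}$, i.e. $f \in S \subseteq C$, contradicting $f \notin C$. Therefore $C_i = C$ is a circuit.

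The step I expect to be the main obstacle is the claim that an element $f \in C_i \setminus C$ lies in no other $C_j$: the clean way is to invoke connectivity to say that if $f$ were in some $C_j$, I could route through circuits of $M$ to force $f \in \cs_{[g]}$. Concretely, connectivity guarantees circuits linking $f$ to elements of $S$; combined with the circuit elimination property (in its strong form, property~(\ref*{item:circuit3})$'$) and the deletion argument of Corollary~\ref{cor:loops}, one shows $f$ becomes a non-bridge of $M \setminus e'$ where $e'$ is chosen in $S \cap C_i$, hence by Corollary~\ref{cor:non-bridges} applied to the cycle system $\cs \setminus \{C_i\}$ of $M \setminus e'$ we get $f \in \cs_{[g] \setminus \{i\}}$, contradiction — and I need connectivity precisely to guarantee such an $e'$ exists and that $f$ and $e'$ can be separated by deletion without killing $f$. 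If a direct argument proves delicate, the fallback is to induct on $g$ using Proposition~\ref{prop:del-con cs}: delete an element $e \in \cs_{[g]} \cap C_g$ to get a connected-or-not minor $M \setminus e$ with cycle system $\cs \setminus \{C_g\}$, handle the inductive step for $C_1,\dots,C_{g-1}$, and then argue separately that $C_g$ itself must be a circuit using that the remaining elements of $C_g$ are forced into $\cs_{[g]}$.
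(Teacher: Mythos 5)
Your reduction is sound up to the key claim, and the final step of your contradiction ($f$ appearing in exactly one member of $\cs$ forces $f\in\cs_{[g]}\cap C_i=S\subseteq C$, contradicting $f\notin C$) is correct---though note that $\cs_{\{i\}}=C_i$ is \emph{not} contained in $\cs_{[g]}$ in general; what you actually use is just the definition of the unique union. The genuine gap is the central claim that some $f\in C_i\setminus C$ lies in no $C_j$ with $j\neq i$. Your argument for it does not close: assuming $f\in C_j$ for some $j\neq i$, you propose to show $f$ is a non-bridge of $M\setminus e'$ and then invoke Corollary~\ref{cor:non-bridges} for the cycle system $\cs\setminus\{C_i\}$ to conclude that $f$ lies in some $C_k$ with $k\neq i$---but that is exactly the hypothesis you assumed, not its negation, so no contradiction results. (Moreover, Corollary~\ref{cor:non-bridges} yields membership in \emph{some} $C_k$, not membership in the unique union $\cs_{[g]\setminus\{i\}}$; and even $f\in\cs_{[g]\setminus\{i\}}$ together with $f\in C_i$ is perfectly consistent: it says $f$ lies in exactly two members of $\cs$, hence $f\notin\cs_{[g]}$, which contradicts nothing you have assumed.) This step cannot be handled by the local bridge/deletion argument of Corollary~\ref{cor:loops}, because the contradiction there hinges on the chosen element lying in $S$ and hence, by definition of $S$, in no other $C_j$---whereas your $f$ is explicitly not in $S$.

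The paper's proof follows your fallback route: induct on $|E(M)|$, remove an element $e_g\in\cs_{[g]}\cap C_g$, use the fact that a connected matroid has a connected single-element deletion or contraction to keep the induction alive, and conclude that $C_1,\dots,C_{g-1}$ are circuits. The remaining step---that $C_g$ itself is a circuit---is the hard part, and it is not ``the remaining elements of $C_g$ are forced into $\cs_{[g]}$.'' Instead, one supposes $C_g$ is not a circuit, produces circuits $C\subseteq C_g$ with $e\in C$ and $e_g\notin C$ (via strong circuit elimination) and $K$ with $e,e_g\in K$ (via connectivity), writes $C=\cs_\tau$ and $K=\cs_\sigma$ using Proposition~\ref{prop:every circuit}, and then shows $\cs_{\sigma\cup\tau}\subseteq K\setminus\{e\}$ is independent because every element of $C$ is covered at least twice by $\{C_k:k\in\sigma\cup\tau\}$ (once with index in $\tau$ and once by $C_g$). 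That global use of Proposition~\ref{prop:every circuit} together with the unique-union calculus is the ingredient missing from your sketch.
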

\begin{proof} We prove this by induction on~$|E(M)|$.  If~$|E(M)|=1$, then
  $E(M)=\{e\}$ where~$e$ is either independent or a loop. In the former
  case,~$\cs=\emptyset$, and in the latter, $\cs=\{e\}$.  The result holds in
  either case. So suppose~$|E(M)|>1$.  Since~$M$ is connected, it has at least one
  circuit, and hence~$g\geq 1$.  Without loss of generality, suppose there is an element~$e_{g}\in \cs_{[g]}\cap C_{g}$.  
  The element~$e_{g}$ is not a bridge since it is contained in~$C_{g}$, and it
  is not a loop since~$M$ is connected with more than one element.

  Since~$M$ is connected, either~$M\setminus e_{g}$ or $M/e_{g}$ is connected
  (\cite[Proposition 4.3.1]{Oxley}).  By Proposition~\ref{prop:del-con cs}, we
  know $\cs = \{C_{1},\ldots,C_{g-1}\}$ is a cycle system
  for~$M\setminus e_{g}$.  If $M\setminus e_{g}$ is connected, then by
  induction, $C_{1},\ldots,C_{g-1}$ are circuits in $M\setminus e_{g}$, and
  hence circuits in $M$.  On the other hand, suppose~$M/e_{g}$ is connected.
  Proposition~\ref{prop:del-con cs} implies~$\{C_{1},\ldots, C_{g-1},
  C_{g}\setminus e_g\}$ is a cycle system, and by induction, it consists of
  circuits of~$M/e_{g}$.  It again follows that~$C_{1},\ldots,C_{g-1}$ are
  circuits in~$M$.

  At this point, we may assume $C_{1},\ldots,C_{g-1}$ are circuits, and it
  remains to be shown that~$C_{g}$ is also a circuit.  For the sake of
  contradiction, suppose it is not.  Let $D$ be a circuit of~$C_{g}$
  containing~$e_{g}$, and let~$e\in C_{g}\setminus D$.  Let~$C\subset C_{g}$
  be a circuit containing~$e$.  By the generalized circuit exchange property,
  if~$e_{g}\in C\cap D$, we may replace~$C$ by a circuit~$C'$ such that~$e\in
  C'\subseteq (C\cup D)\setminus\{e_{g}\}$.  So we may assume $e_{g}\not\in C$.  Since~$M$ is
  connected, there is a circuit~$K$ with~$e,e_{g}\in K$.  Pick subsets
  $\sigma,\tau\subseteq[g]$ such that~$\cs_{\sigma}=K$ and~$\cs_{\tau}=C$.
  Since~$e_{g}\in\cs_{[g]}\cap C_{g}$, it follows that~$g\in \sigma$
  and~$g\not\in\tau$.  But~$\cs_{\tau}=C\subset C_{g}$,
  which implies~$\cs_{\tau\cup\sigma}\cap C=\emptyset$.  That is because every element
  of~$C$ appears in at least two distinct cycles indexed by elements
  of~$\tau\cup\sigma$: namely,
  in $C_{i}$ for some $i\in \tau$ and in~$C_{g}$. We have 
  $\cs_{\tau\cup\sigma}\subseteq
  \cs_{\tau}\cup\cs_{\sigma}=C\cup K$,
  and~$\cs_{\tau\cup\sigma}$ contains no elements of~$C$.  Therefore,  
  \[
    \cs_{\sigma\cup\tau}\subseteq K\setminus\{e\},
  \]
  implying~$\cs_{\sigma\cup\tau}$ is independent, contradicting the fact
  that $\cs$ is a cycle system.
\end{proof}

We recall the notion of the decomposition of a matroid into connected components \cite[Chapter 4]{Oxley}. 
A \emph{connected component} of a matroid $M$ is a subset $T$ of the groundset that is maximal (under inclusion) with respect to the property that $M|T$ is connected, i.e., given any two distinct elements $x,y\in T$, there is a circuit contained in $T$ containing both $x$ and $y$. 
Let $M_1,...,M_n$ be matroids with disjoint groundsets. The \emph{direct sum} $M=M_1\oplus\cdots\oplus M_n$ is the matroid with groundset $E(M)=\bigcup_{i=1}^nE(M_i)$ and independent sets $I(M)=\{I_1\cup\cdots\cup I_n: I_i\in I(M_i)\text{ for all $i$}\}$. If $T_1,...,T_k$ are the connected components of a matroid $N$, let $N_i=N|T_i$ for $i=1,...,k$.  Then $N=N_1\oplus\cdots\oplus N_k$ is the \emph{decomposition of $N$} into connected components.  It is unique up to a reordering of the $T_i$.

\begin{lemma}\label{lemma:decomp}
  Let $M$ be a matroid with cycle system $\cs = \{C_1,...,C_g\}$.
  Suppose there exists~$i\in [g]$ such that~$\cs_{[g]}\cap C_{i}$ contains a circuit~$C$.  Then $M=C\oplus
  (M\setminus C)$.  Further, $\{C_{j}:j\in[g]\setminus\{i\}\}$ is a cycle system
  for~$M\setminus C$.
\end{lemma}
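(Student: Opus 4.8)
The plan is to first pin down exactly what the set $\cs_{[g]}\cap C_i$ is. The hypothesis only says it \emph{contains} the circuit $C$, but Corollary~\ref{cor:loops} tells us $\cs_{[g]}\cap C_i$ is itself contained in some circuit of $M$ lying inside $C_i$; since one circuit contained in another forces equality, this upgrades to $\cs_{[g]}\cap C_i = C$ exactly. I would also record a second elementary observation up front: every $x\in C\subseteq\cs_{[g]}$ lies in exactly one of the $C_k$, and since $x\in C\subseteq C_i$ that $C_k$ must be $C_i$; hence $C\cap C_j=\emptyset$ for every $j\neq i$.

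The core step is to show that $C$ is a separator of $M$, i.e.\ that every circuit $D$ of $M$ is either contained in $C$ or disjoint from it; the decomposition $M=C\oplus(M\setminus C)$ is then the standard consequence (and in fact $C$ is a single connected component, a circuit being a connected matroid). To prove the claim, take any circuit $D$ with $D\cap C\neq\emptyset$ and pick $x\in D\cap C$; by Proposition~\ref{prop:every circuit} write $D=\cs_\tau$ for some $\tau\subseteq[g]$. Since $x$ lies in the single cycle $C_i$ only, its membership in the unique union $\cs_\tau$ forces $i\in\tau$. Now apply the monotonicity property of the unique union operator listed after Definition~\ref{defn:uniqueunion}: with $i\in\tau\subseteq[g]$ we get $\cs_{[g]}\cap C_i\subseteq\cs_\tau\cap C_i$, that is $C\subseteq D$; since $C$ and $D$ are both circuits this gives $C=D$. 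Hence the only circuit meeting $C$ is $C$ itself, so $C$ is a separator and $M=(M|C)\oplus(M\setminus C)$.

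It then remains to check that $\cs'=\{C_j:j\in[g]\setminus\{i\}\}$ is a cycle system for $M\setminus C$. The cardinality is correct because $g(M\setminus C)=g(M)-g(M|C)=g-1$, using that a circuit has corank $1$ and that corank is additive over direct sums. Each $C_j$ with $j\neq i$ is disjoint from $C$ by the preliminary observation, so it is contained in $E(M\setminus C)$ and $(M\setminus C)|C_j=M|C_j$ is bridgeless, i.e.\ $C_j$ is still a cycle of $M\setminus C$. Finally, for $\emptyset\neq\sigma\subseteq[g]\setminus\{i\}$ the unique union $\cs'_\sigma$ is literally the same subset of $E$ as $\cs_\sigma$, which is dependent in $M$ and is contained in $\bigcup_{j\in\sigma}C_j\subseteq E(M\setminus C)$, hence dependent in $M\setminus C$. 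This verifies the unique union property and finishes the argument.

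The only genuinely delicate point I anticipate is the separator step: one must first strengthen the hypothesis to $\cs_{[g]}\cap C_i=C$ via Corollary~\ref{cor:loops} before the monotonicity inequality can be applied in the useful direction (the monotonicity bullet requires the distinguished index $i$ to lie in the \emph{smaller} set $\tau$), after which $C\subseteq D$ and hence $C=D$ drops out. The cardinality and cycle-preservation parts are routine bookkeeping with deletion and corank.
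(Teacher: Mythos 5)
Your proof is correct and follows essentially the same route as the paper's: write a circuit $D$ meeting $C$ as $\cs_\tau$ via Proposition~\ref{prop:every circuit}, force $i\in\tau$ because elements of $C\subseteq\cs_{[g]}$ lie only in $C_i$, and apply monotonicity of the unique union to conclude $C\subseteq D$, hence $C=D$, so $C$ is a separator; the bookkeeping for the residual cycle system is also the same. The only divergence is your preliminary upgrade of $C\subseteq\cs_{[g]}\cap C_i$ to an equality via Corollary~\ref{cor:loops}, which is harmless but not actually needed: the chain $C\subseteq\cs_{[g]}\cap C_i\subseteq\cs_\tau\cap C_i\subseteq\cs_\tau=D$ already runs directly from the containment, so the "delicate point" you flag is not one.
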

\begin{proof}
  Without loss of generality, $i=g$. Consider an arbitrary circuit $D$ that has nonempty intersection with $C$; we wish to show that $D=C$. Choose $x\in C\cap D$. By Proposition~\ref{prop:every circuit}, there
  exists~$\sigma\subseteq[g]$ such that~$\cs_{\sigma}=D$. Since~$x\in
  \cs_{[g]}\cap C_{g}$, it must be that~$g\in \sigma$.  However, if that is the
  case,
 \[
   C\subseteq \cs_{[g]}\cap C_{g}\subseteq\cs_{\sigma}\cap
   C_{g}\subseteq \cs_{\sigma}=D.
 \]
Since~$C$ and~$D$ are circuits, it follows that $C=D$. Thus, it must be that~$M=C\oplus (M\setminus C)$.
 Since~$C\subseteq \cs_{[g]}\cap C_{g}$, we have~$C\cap
 C_{j}=\emptyset$ for all~$j\neq g$.  Since $M=C\oplus (M\setminus C)$, we have
 $g(M\setminus C)=g-1$. It then easily follows that~$\{C_{j}:j\in[g-1]\}$ is a cycle system
  for~$M\setminus C$.
\end{proof}
  
\begin{lemma}\label{lemma:straightening}
  Let $M$ and $N$ be matroids, and suppose that $M\oplus N$ has a cycle system
  of the form 
  \[
    \mathcal{E}=\{C_{1},\ldots,C_{g_M}, K_{1}\cup D_{1},\ldots,K_{g_N}\cup D_{g_N}\}
  \]
  where each $C_{i}$ is a cycle in $M$, each~$K_{i}$ is either a cycle in~$M$ or
  empty, and each~$D_{i}$ is a cycle in~$N$. Then
  $\cs:=\{C_{1},\ldots,C_{g_{M}}\}$ is a cycle system for $M$
  and~$\mathcal{D}:=\{D_{1},\ldots,D_{g_{N}}\}$ is a cycle system for $N$.
\end{lemma}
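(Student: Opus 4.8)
The plan is to verify the two conditions of Definition~\ref{defn:cyclesystem} for $\cs$ and for $\mathcal{D}$ separately. Corank being additive over direct sums, $|\mathcal{E}|=g_M+g_N=g(M)+g(N)$; since Proposition~\ref{prop:max size for cs} (applied to $\cs$ and to $\mathcal{D}$ once their unique union properties are established) gives $|\cs|\le g(M)$ and $|\mathcal{D}|\le g(N)$, in fact $|\cs|=g_M=g(M)$ and $|\mathcal{D}|=g_N=g(N)$, so the size conditions hold. The structural fact used throughout is that the circuits of $M\oplus N$ are exactly the circuits of $M$ together with the circuits of $N$; hence a subset $X$ of $E(M)\cup E(N)$ is dependent in $M\oplus N$ iff $X\cap E(M)$ is dependent in $M$ or $X\cap E(N)$ is dependent in $N$, and a subset of $E(M)$ (resp.\ $E(N)$) is dependent in $M\oplus N$ iff it is dependent in $M$ (resp.\ $N$). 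The easy half is that $\cs$ is a cycle system for $M$: each $C_i$ is a cycle of $M$ by hypothesis, and for $\emptyset\neq\sigma\subseteq[g_M]$ the unique union $\cs_\sigma$ coincides with $\mathcal{E}_\sigma$, which is dependent in $M\oplus N$; since $\cs_\sigma\subseteq\bigcup_{i\in\sigma}C_i\subseteq E(M)$, it is dependent in $M$.

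For the hard half ($\mathcal{D}$), write $\tau'=\{g_M+j:j\in\tau\}$ for $\tau\subseteq[g_N]$. The first computation I would run records that for all $\sigma\subseteq[g_M]$ and $\tau\subseteq[g_N]$,
\[
 \mathcal{E}_{\sigma\cup\tau'}\cap E(N)=\ast\{D_j:j\in\tau\}=\mathcal{D}_\tau
 \quad\text{and}\quad
 \mathcal{E}_{\sigma\cup\tau'}\cap E(M)=\ast\bigl(\{C_i:i\in\sigma\}\cup\{K_j:j\in\tau\}\bigr),
\]
the first identity holding because the $C_i$ lie in $E(M)$ while only the $D_j$-part of $K_j\cup D_j$ meets $E(N)$; in particular the $E(N)$-part is independent of $\sigma$. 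Now fix $\emptyset\neq\tau\subseteq[g_N]$ and suppose, for contradiction, that $\mathcal{D}_\tau$ is independent in $N$. Then for \emph{every} $\sigma\subseteq[g_M]$ the set $\mathcal{E}_{\sigma\cup\tau'}$ is dependent in $M\oplus N$ while its $E(N)$-part is independent, so its $E(M)$-part $\ast(\{C_i:i\in\sigma\}\cup\{K_j:j\in\tau\})$ must be dependent in $M$.

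To close, put $K:=\ast\{K_j:j\in\tau\}\subseteq E(M)$. The elementary observation is that whenever $\sigma\subseteq[g_M]$ satisfies $\cs_\sigma\subseteq K$, one has $\ast(\{C_i:i\in\sigma\}\cup\{K_j:j\in\tau\})=K\setminus\bigcup_{i\in\sigma}C_i$: for $e\in E(M)$, lying in exactly one of the listed sets forces the number of $C_i$ with $i\in\sigma$ and $e\in C_i$ to be $0$, since otherwise $e\in\cs_\sigma\subseteq K\subseteq\bigcup_{j\in\tau}K_j$ would raise the total count to at least $2$. Using this I construct pairwise disjoint circuits $C^{(1)},C^{(2)},\dots$ of $M$, all contained in $K$: given disjoint $C^{(1)},\dots,C^{(k)}\subseteq K$, apply Proposition~\ref{prop:every circuit} (legitimate now that $\cs$ is known to be a cycle system for $M$) to write $C^{(\ell)}=\cs_{\sigma_\ell}$ and set $\sigma=\sigma_1\cup\dots\cup\sigma_k$; iterating the containment $\cs_{\rho\cup\rho'}\subseteq\cs_\rho\cup\cs_{\rho'}$ gives $\cs_\sigma\subseteq C^{(1)}\cup\dots\cup C^{(k)}\subseteq K$, so $\mathcal{E}_{\sigma\cup\tau'}\cap E(M)=K\setminus\bigcup_{i\in\sigma}C_i$, which the previous step shows is dependent in $M$; take a circuit $C^{(k+1)}$ inside it, necessarily disjoint from $C^{(1)},\dots,C^{(k)}$ since each $C^{(\ell)}\subseteq\bigcup_{i\in\sigma}C_i$. (For $k=0$ this reads: $K=\mathcal{E}_{\tau'}\cap E(M)$ is dependent, hence contains a circuit.) Since the finite set $K$ cannot contain $k$ pairwise disjoint nonempty subsets for every $k$, this is a contradiction; so $\mathcal{D}_\tau$ is dependent in $N$ for every nonempty $\tau$, and $\mathcal{D}$ is a cycle system for $N$.

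\textbf{Main obstacle.} The whole difficulty is the asymmetry between the halves: the pure cycles $C_i$ restrict cleanly to $E(M)$, but the $E(M)$-shadows $K_j$ of the mixed cycles $K_j\cup D_j$ obstruct the direct verification for $\mathcal{D}$, and the dependence of $\mathcal{E}_{\tau'}$ need not be witnessed inside $E(N)$. The key point is that one cannot reduce by a single deletion or contraction — a deletion/contraction induction stalls since there is no converse to Proposition~\ref{prop:del-con cs}(\ref{item:del-con cs 2}): knowing $\{C_i\setminus e\}$ is a cycle system for $M/e$ only yields that $\cs_\sigma\cup e$ is dependent in $M$, not that $\cs_\sigma$ is. Instead one must feed the cycle system $\cs$ of $M$ back into the argument to keep extracting disjoint circuits from $K$, closing with finiteness — which is exactly why the easy half (which makes Proposition~\ref{prop:every circuit} available for $\cs$) must be done first.
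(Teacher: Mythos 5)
Your proof is correct, and the easy half (that $\cs$ inherits the unique union property because it is a subcollection of $\mathcal{E}$ landing inside $E(M)$) matches the paper exactly. For the hard half you take a genuinely different route. The paper argues directly: setting $A=\ast\{K_i:i\in\tau\}$ and $B=\mathcal{D}_\tau$, if $A$ is independent then dependence of $A\cup B$ in $M\oplus N$ forces $B$ dependent; if $A$ is dependent, then by Proposition~\ref{prop:max size for cs} the $(g_M+1)$-element family $\{C_1,\ldots,C_{g_M},A\}$ must fail the unique union property, yielding $\mathcal{S}\subseteq\cs$ with $\ast(\mathcal{S}\cup\{A\})$ independent, and the containment $\ast(\mathcal{S}\cup\mathcal{T})\subseteq\ast(\mathcal{S}\cup\{\ast\mathcal{T}\})$ then transfers the dependence of $\mathcal{E}_{\sigma\cup\tau'}$ onto $B$. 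You instead argue by contradiction, repeatedly invoking Proposition~\ref{prop:every circuit} to realize each extracted circuit as $\cs_{\sigma_\ell}$ and peeling infinitely many pairwise disjoint circuits out of the finite set $K=\ast\{K_j\}$; your identity $\ast(\{C_i:i\in\sigma\}\cup\{K_j:j\in\tau\})=K\setminus\bigcup_{i\in\sigma}C_i$ (valid whenever $\cs_\sigma\subseteq K$) checks out, as does the disjointness of successive circuits. What the paper's route buys is brevity and the avoidance of Proposition~\ref{prop:every circuit}, which is itself proved by a nontrivial induction; what your route buys is a more concrete picture of \emph{why} the shadow $K$ cannot absorb all the dependence. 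One wrinkle worth fixing: your bookkeeping for the size condition is circular as written. You derive $g_M=g(M)$ from $g_N\le g(N)$, which requires the unique union property of $\mathcal{D}$ --- but your proof of that property applies Proposition~\ref{prop:every circuit} to $\cs$, which already presupposes $|\cs|=g(M)$. This is harmless under the intended reading of the lemma (the subscripts $g_M$, $g_N$ denote the coranks of $M$ and $N$ by definition, as the paper's own invocation of Proposition~\ref{prop:max size for cs} with ``cardinality exceeds $g_M$'' also requires), but you should either adopt that reading explicitly or prove $g_M=g(M)$ before, not after, invoking Proposition~\ref{prop:every circuit}.
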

\begin{proof} The set~$\cs$ has the unique union property since it is a
  subset of $\mathcal{E}$. Hence, it is a cycle system for $M$.  Now
  consider~$\mathcal{D}$. Given $\emptyset\neq\sigma\subseteq[g_N]$, we must
  show $B:=\mathcal{D}_{\sigma}$ is dependent. Let
  $\mathcal{T}=\{K_{i}:i\in\sigma\}$ and~$A=\ast \mathcal{T}\subseteq E(M)$
  so that 
  \[
    *\{K_i\cup D_{i}:i\in\sigma\}=A\cup B.
  \]
  If~$A$ is independent, then since
  $\mathcal{E}$ is a cycle system, $B$ must be dependent, as required.  On the
  other hand, if~$A$ is dependent, then by Proposition~\ref{prop:max size for cs}, the
  set~$\{C_{1},\ldots,C_{g_{M}},A\}$ cannot have the unique union
  property as its cardinality exceeds~$g_M$. So there exists a
  subset~$\mathcal{S}\subseteq\cs$ such that~$Y:=\ast(\mathcal{S}\cup
  \{A\})$ is independent. Since~$\mathcal{E}$ is a cycle system, the set
  \[
    *(\{K_{i}\cup D_{i}:i\in \sigma\}\cup
    \mathcal{S})=\ast(\mathcal{S}\cup\mathcal{T})\cup B
  \]
  is dependent.  However,
  $\ast(\mathcal{S}\cup\mathcal{T})$ is independent since $\ast(\mathcal{S}\cup
  \mathcal{T})\subseteq\ast(\mathcal{S}\cup\{\ast \mathcal{T}\})=Y$.
  Therefore,~$B$ must be dependent in this case, too. Thus,~$\mathcal{D}$ is a cycle system
  for~$N$.
\end{proof}

\begin{thm}\label{thm:components} Let $W=\oplus_{i=1}^{k}M_{i}$ be the direct
  sum decomposition of the matroid~$W$ into connected components, and
  let~$\mathcal{E}$ be a cycle system for~$W$.  Up to a reordering of the
  components of~$W$, there are cycle systems~$\cs^{(i)}$ for $M_{i}$ for
  each~$i$ such that~$\mathcal{E}$ is the disjoint union of sets
  $\widetilde{\cs}^{i}$ of the form
  \[
    \widetilde{\cs}^{i} =\{K_C\cup C:C\in \cs^{(i)}\}
  \]
  where each~$K_{C}$ is either empty or a cycle contained in $\oplus_{j=1}^{i-1}M_{j}$.  In
  particular, $\widetilde{\cs}^{(1)}=\cs^{(1)}$.
\end{thm}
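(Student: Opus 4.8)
The argument goes by induction on the number $k$ of connected components of $W$, peeling off one component at a time; write $g:=g(W)=\sum_i g(M_i)$. The case $k=1$ is trivial (take $\cs^{(1)}=\mathcal{E}$ and all $K_C=\emptyset$). For $k\ge 2$, the crux is to produce a component $M_{i_0}$ that meets \emph{exactly} $g(M_{i_0})$ of the cycles of $\mathcal{E}$ --- equivalently, such that the cycles of $\mathcal{E}$ disjoint from $E(M_{i_0})$ number exactly $g-g(M_{i_0})=g\bigl(\bigoplus_{i\neq i_0}M_i\bigr)$. That \emph{at least} $g(M_i)$ cycles meet each $M_i$ is automatic: the cycles of $\mathcal{E}$ disjoint from $E(M_i)$ are cycles of $\bigoplus_{j\neq i}M_j$ enjoying the unique union property, so Proposition~\ref{prop:max size for cs} caps their number at $g\bigl(\bigoplus_{j\neq i}M_j\bigr)$. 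The content is that this cap is attained for some component.

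Granting such an $M_{i_0}$, reorder so that $M_{i_0}=M_k$ and put $M:=\bigoplus_{i<k}M_i$, $N:=M_k$, so $W=M\oplus N$. Each $C\in\mathcal{E}$ splits as $(C\cap E(M))\sqcup(C\cap E(N))$ with $C\cap E(M)$ a cycle of $M$ (possibly empty) and $C\cap E(N)$ a cycle of $N$, and by the choice of $M_{i_0}$ exactly $g(M)$ of the cycles lie wholly in $E(M)$. Thus $\mathcal{E}$ has the form demanded by Lemma~\ref{lemma:straightening}, which then tells us that $\{C\in\mathcal{E}:C\subseteq E(M)\}$ is a cycle system for $M$ and that $\cs^{(k)}:=\{C\cap E(N):C\in\mathcal{E},\ C\cap E(N)\neq\emptyset\}$ is a cycle system for $N=M_k$. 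Applying the inductive hypothesis to the cycle system $\{C\in\mathcal{E}:C\subseteq E(M)\}$ of the $(k-1)$-component matroid $M$, after reordering $M_1,\dots,M_{k-1}$ we obtain cycle systems $\cs^{(i)}$ and a decomposition $\{C\in\mathcal{E}:C\subseteq E(M)\}=\bigsqcup_{i=1}^{k-1}\widetilde{\cs}^i$ of the required triangular form with $\widetilde{\cs}^1=\cs^{(1)}$. Setting $\widetilde{\cs}^k:=\{C\in\mathcal{E}:C\cap E(M_k)\neq\emptyset\}$ and, for $C'\cap E(N)\in\cs^{(k)}$, letting $K_{C'\cap E(N)}:=C'\cap E(M)$ (a cycle of $\bigoplus_{i<k}M_i$ or the empty set), we get $\mathcal{E}=\bigsqcup_{i=1}^k\widetilde{\cs}^i$ with $\widetilde{\cs}^k=\{K_C\cup C:C\in\cs^{(k)}\}$. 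Because $M_k$ was placed \emph{last}, no further correction of the $K_C$'s is needed.

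To produce $M_{i_0}$ I would induct on $|E(W)|$. If $g=0$ then $\mathcal{E}=\emptyset$ and any component works. Otherwise $\mathcal{E}_{[g]}$ is dependent, so it contains a circuit $D_0$, lying in a single component $M_{i_1}$. If $D_0=\{\ell\}$ is a loop, then $M_{i_1}=\{\ell\}$ has corank $1$ and $\ell\in D_0\subseteq\mathcal{E}_{[g]}$ lies in exactly one cycle $C_j$, so $M_{i_1}$ meets exactly one cycle and we take $M_{i_0}=M_{i_1}$. If $\mathcal{E}_{[g]}\cap C_i$ contains a circuit $C$ for some $i$ (e.g.\ if $D_0$ is contained in a single $\mathcal{E}_{[g]}\cap C_j$), then by Lemma~\ref{lemma:decomp} the set $C$ is a connected component of $W$ (necessarily of corank $1$), $\{C_j:j\neq i\}$ is a cycle system for $W\setminus C$, and $C_j\cap C=\emptyset$ for $j\neq i$; hence $C$ meets exactly the single cycle $C_i$, and we take $M_{i_0}=C$. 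In the remaining case, choose a non-loop $e\in D_0$, lying in the unique cycle $C_g\ni e$; by Proposition~\ref{prop:del-con cs}(\ref{item:del-con cs 1}), $\{C_1,\dots,C_{g-1}\}$ is a cycle system for the smaller matroid $W\setminus e$, to which the inductive hypothesis applies.

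The delicate point --- and, I expect, the main obstacle --- is this last case: deleting $e$ can disconnect $M_{i_1}$ into components $N_1,\dots,N_m$ of $W\setminus e$, and the component returned by induction may be one of the $N_l$ rather than an untouched $M_i$ $(i\neq i_1)$. One must show that, possibly after choosing $e\in D_0$ more carefully, the outcome transfers back to $W$: either an original $M_i$ disjoint from $C_g$ is returned (and it then still meets exactly $g(M_i)$ cycles of $\mathcal{E}$, since restoring $C_g$ and $e$ does not touch $M_i$), or else $M_{i_1}$ itself becomes the good component of $W$ --- here the count works out because $g(M_{i_1}\setminus e)=g(M_{i_1})-1$ for the non-coloop $e$, and $C_g$ is precisely the cycle discarded in passing to $W\setminus e$. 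Carrying out this bookkeeping carefully --- which is exactly why the decomposition is only asserted up to a reordering of the components --- is the part of the proof that requires real work.
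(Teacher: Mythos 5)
Your reduction is sound and is a genuinely different route from the paper's: granting the existence of a component $M_{i_0}$ met by exactly $g(M_{i_0})$ elements of $\mathcal{E}$, placing it \emph{last}, splitting each cycle along $E(M_{i_0})$, and invoking Lemma~\ref{lemma:straightening} does peel that component off correctly, and your use of Proposition~\ref{prop:max size for cs} to show that at least $g(M_i)$ cycles meet each $M_i$ is right. The paper instead peels off the \emph{first} component, proving that $\mathcal{E}$ contains $g(M_1)$ elements wholly contained in $E(M_1)$ that form a cycle system for $M_1$; both reductions then funnel through Lemma~\ref{lemma:straightening}. The difference is only in which existence statement must be proved---and that existence statement is the entire difficulty. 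You have not proved yours; you explicitly defer it, and the case you defer is exactly the one that fails as written.

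Concretely, in your induction on $|E(W)|$, after deleting a non-loop $e\in D_0\cap C_g$ and applying the inductive hypothesis to $W\setminus e$ with cycle system $\{C_1,\dots,C_{g-1}\}$, two things go wrong. (a) The returned component may be an untouched $M_i$ ($i\neq i_1$) that nevertheless meets $C_g$: then $M_i$ meets $g(M_i)+1$ elements of $\mathcal{E}$, and neither horn of your dichotomy applies; nothing prevents $C_g$ from containing circuits of several components, and Proposition~\ref{prop:max size for cs} gives only an upper bound on the cycles avoiding $M_i$, so no contradiction arises. (b) The returned component may be a fragment $N_l$ of $M_{i_1}\setminus e$; knowing that \emph{one} fragment is met by exactly $g(N_l)$ of the $C_j$ ($j<g$) does not bound the number of $C_j$ meeting all of $M_{i_1}\setminus e$ by $\sum_l g(N_l)$ (a single $C_j$ can meet several fragments, and the other fragments may each be met by too many), so the count $g(M_{i_1}\setminus e)+1=g(M_{i_1})$ is not established. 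The paper confronts the same obstruction and escapes it by exploiting connectivity of the component: at least one of $M_1\setminus e$ and $M_1/e$ is connected, the contraction branch is used to pull a cycle system for $M_1/e$ back to $g$ elements of $\mathcal{E}$ lying inside $E(M_1)$, and in the deletion branch a second deletion at $f\in A\setminus C$ forces the extraneous piece $K$ to be empty. Some mechanism of comparable strength is needed to close your argument; as it stands, the crux is asserted rather than proved.
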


\begin{proof} The theorem is true if~$W$ has only one component.  So, by
  induction using Lemma~\ref{lemma:straightening}, it suffices to show that~$\mathcal{E}$
  contains a cycle system for some component of~$W$.  We may suppose that~$W$
  has at least two components and no components of corank~$0$.  We use induction
  on the cardinality of~$E(W)$. Let $A$ be a circuit contained in
  $*\mathcal{E}$, and let~$e\in A$. Without loss of generality, $A\subseteq
  E(M_{1})$.  Say $e\in C\cup K\in\mathcal{E}$ where $C$ is a cycle of $M_{1}$ and~$K$ is
  either empty or a cycle of $ W\setminus M_{1}$. 

  If~$A\subseteq C\cup K$, then by Lemma~\ref{lemma:decomp}, we have~$W=A\oplus(W\setminus A)$.
  By connectedness,~$M_{1}=A=C$, and~$\mathcal{E}':=\mathcal{E}\setminus(C\cup K)$
  is a cycle system for~$W\setminus A=\bigoplus_{i=2}^{k}M_{i}$.  The result
  then follows by induction:~$\mathcal{E}'$ contains a cycle system for some
  component of~$W\setminus A$, and hence, $\mathcal{E}$ contains a cycle system
  for some component of~$W$.  So assume~$A\not\subseteq C\cup K$.  In
  particular,~$e$ is not a loop.

  Since~$M_{1}$ is connected, at least one of $M_{1}\setminus e$ and $M_{1}/e$
  is connected.  Let~$g$ denote the corank of~$M_{1}$. First, suppose~$M_{1}/e$
  is connected, and let~$\mathcal{E}''$ be the usual restriction
  of~$\mathcal{E}$ to a cycle system on~$W/e
  =(M_{1}/e)\oplus(\bigoplus_{i=2}^{k}M_{i})$, obtained from~$\mathcal{E}$ by
  replacing~$C\cup K$ by $(C\setminus e)\cup K$.  By induction, $\mathcal{E}''$ contains
  a cycle system for some component of~$W/e$.  If that component is
  not~$M_{1}/e$, we are done.  Otherwise, there exist
  cycles~$C_{1},\ldots,C_{g}\in\mathcal{E}''$ forming a cycle system
  for~$M_{1}/e$.  For each~$i$, either $C_{i}$ or $C_{i}\cup e$ is an element
  of~$\mathcal{E}$.  Therefore, $\mathcal{E}$ has~$g$ elements contained in
  $E(M_{1})$.   Since~$\mathcal{E}$ is a cycle system, these~$g$ elements
  satisfy the unique union condition and hence form a cycle system for~$M_{1}$,
  as required.
  (In fact, since~$e$ is a non-bridge, it must be contained in one of these~$g$
  elements.  Since $C\cup K$ is the only element of~$\mathcal{E}$ containing~$e$,
  it follows that~$K=\emptyset$.)

  Finally, suppose that~$M_{1}\setminus e$ is connected, and
  let~$\mathcal{E}'=\mathcal{E}\setminus(C\cup K)$ be the usual restriction to a
  cycle system on~$W\setminus e=(M_{1}\setminus e)\oplus
  (\bigoplus_{i=2}^{k}M_{i})$.  By induction, $\mathcal{E}'$ contains a cycle
  system for one of the components of~$W\setminus e$.  If it is one of the
  $M_{i}$ with $i\geq 2$, then we are done.  Otherwise, there exist
  cycles~$C_{1},\ldots,C_{g-1}\in\mathcal{E}'\subset\mathcal{E}$, contained
  in~$M_{1}$ and forming a cycle system for~$M_{1}\setminus e$. Let~$f\in
  A\setminus C$. Then, since~$M_{1}$ is connected,~$f$ is not a loop.  If
  $M_{1}/f$ is connected, we are done, as argued earlier.
  Otherwise,~$M_{1}\setminus f$ is connected, and arguing as at the beginning of
  this paragraph, we may assume that there exist
  circuits~$C_{1}',\ldots,C_{g-1}'\in\mathcal{E}$ forming a
  cycle system for~$M_{1}\setminus f$.  Since~$e$ is contained in the cycle $C$ of $M_1\setminus f$, it is a non-bridge element
  of~$E(M_{1}\setminus f)$, so~$e\in C_{i}'$ for some~$i$ (see Cor.~\ref{cor:non-bridges}). However,
  the only element of~$\mathcal{E}$ containing~$e$ is $C\cup K$.  Thus, $C_{i}'=C\cup K$
  for some~$i$, which forces~$K=\emptyset$.  It follows
  that~$\cs=\{C_{1},\ldots,C_{g-1},C\}$ is a subset of~$\mathcal{E}$
  with cardinality~$g$ and consisting of cycles in $M_{1}$.  Since~$\mathcal{E}$
  is a cycle system, $\cs$ satisfies the unique union condition and
  hence forms a cycle system for~$M_{1}$, which completes the proof. 
\end{proof}

\begin{cor}\label{cor:circuit systems}
Let $M$ be a matroid.
\begin{enumerate}
    \item\label{cor:cir1} $M$ has a cycle system if and only if each of its connected components has a cycle system.
    \item\label{cor:cir2} If $M$ has a cycle system, then it has a circuit system.
\end{enumerate}
\end{cor}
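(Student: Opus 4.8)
The plan is to deduce both statements from Theorem~\ref{thm:components} and Theorem~\ref{thm:circuits}, after first recording the easy converse direction for direct sums. Throughout, write $M = M_1 \oplus \cdots \oplus M_k$ for the decomposition of $M$ into connected components, and recall two standard facts: corank is additive, $g(M) = \sum_i g(M_i)$, and the circuits of a direct sum are exactly the circuits of the summands (a minimal dependent set cannot meet two distinct $E(M_j)$, since a set is dependent in a direct sum iff one of its restrictions to a summand is dependent).

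For part~(\ref{cor:cir1}), the forward implication is immediate: Theorem~\ref{thm:components} produces cycle systems $\cs^{(i)}$ for each $M_i$ out of a given cycle system for $M$. For the converse, suppose each $M_i$ has a cycle system $\cs^{(i)}$, and set $\cs := \cs^{(1)} \sqcup \cdots \sqcup \cs^{(k)}$, each element viewed as a subset of $E(M)$. Its cardinality is $\sum_i g(M_i) = g(M)$, and each element is a cycle of $M$, so it remains to verify the unique union property. Index $\cs$ so that a nonempty $\sigma \subseteq [g(M)]$ decomposes as $\sigma = \sigma_1 \sqcup \cdots \sqcup \sigma_k$, with $\sigma_i$ indexing the members of $\cs^{(i)}$. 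Since the ground sets $E(M_i)$ are pairwise disjoint, an element of $E(M_i)$ lies in exactly one of $\{C_j : j \in \sigma\}$ if and only if it lies in exactly one of $\{C_j : j \in \sigma_i\}$; hence $\cs_\sigma \cap E(M_i) = (\cs^{(i)})_{\sigma_i}$ and $\cs_\sigma = \bigsqcup_i (\cs^{(i)})_{\sigma_i}$. As $\sigma \neq \emptyset$, some $\sigma_i \neq \emptyset$, so $(\cs^{(i)})_{\sigma_i}$ is dependent in $M_i$, and therefore $\cs_\sigma$ is dependent in $M$. This gives the converse.

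For part~(\ref{cor:cir2}), suppose $M$ has a cycle system. By part~(\ref{cor:cir1}), each connected component $M_i$ has a cycle system $\cs^{(i)}$, and since $M_i$ is connected, Theorem~\ref{thm:circuits} shows every element of $\cs^{(i)}$ is a circuit of $M_i$, hence a circuit of $M$. Then the cycle system $\cs = \bigsqcup_i \cs^{(i)}$ built in the converse of part~(\ref{cor:cir1}) consists entirely of circuits of $M$, i.e., it is a circuit system.

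The only point requiring care is the bookkeeping in the converse of~(\ref{cor:cir1}): that the unique union operator, evaluated in the ambient ground set $E(M)$, restricts component-wise because the $E(M_i)$ are disjoint. Everything else is a direct appeal to Theorems~\ref{thm:components} and~\ref{thm:circuits} together with the description of circuits of a direct sum, so I expect no genuine obstacle here—this corollary is essentially a repackaging of those two theorems.
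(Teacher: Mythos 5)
Your proof is correct and follows essentially the same route as the paper: the forward direction of part~(1) from Theorem~\ref{thm:components}, the converse by taking the union of the component cycle systems (where you usefully spell out the component-wise decomposition of the unique union that the paper leaves implicit), and part~(2) from Theorem~\ref{thm:circuits}. No issues.
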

\begin{proof}
    For part~(\ref{cor:cir1}), note that if each of the connected components of $M$ has a cycle system, then the union of these cycle systems forms a cycle system for $M$.  
    Conversely, if we suppose $M$ has a cycle system, then each of its components has a cycle system by Theorem~\ref{thm:components}. Part~(\ref{cor:cir2}) then follows from Theorem~\ref{thm:circuits}.
\end{proof}

Circuit systems also have connections to an underlying vector space associated to a matroid. To recall the relevant definitions, suppose $M$ is a matroid on ground set $E$, and let $W$ be the $\mathbb{F}_2$-vector space with basis $E$. Then the \emph{circuit space} of $M$ is the subspace of $W$ spanned by the indicator vectors for circuits $C$ in $M$. We then have the following observation.

\begin{prop}\label{prop:circuitspace}
    Suppose $\cs=\{C_i\}$ is a circuit system for a matroid $M$. Then the corresponding indicator vectors $w_i$ are linearly independent in the circuit space $W$. In particular, if $M$ is binary, then the collection of indicator vectors $\{w_i\}$ for $\cs=\{C_i\}$ forms a basis for $W$.
    \end{prop}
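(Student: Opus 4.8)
The plan is to prove the linear independence assertion directly from the defining unique union property of a cycle system, and then to deduce the ``basis'' half from the standard computation of the dimension of the circuit space of a binary matroid. No deletion/contraction or induction should be needed.

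First I would work coordinate-wise over $\mathbb{F}_2$. Suppose that $\sum_{i\in\sigma}w_i=0$ in $W$ for some $\sigma\subseteq[g]$, and aim to show $\sigma=\emptyset$. For each $e\in E$, the $e$-coordinate of $\sum_{i\in\sigma}w_i$ is $|\{i\in\sigma:e\in C_i\}|\bmod 2$, so the support of this vector is precisely the set of ground-set elements that lie in an odd number of the $C_i$ with $i\in\sigma$. The key (and essentially the only) observation is that this support contains the unique union $\cs_\sigma$: an element belonging to \emph{exactly} one of the $C_i$, $i\in\sigma$, certainly belongs to an \emph{odd} number of them. Hence, if $\sum_{i\in\sigma}w_i=0$, then $\cs_\sigma=\emptyset$. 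But $\emptyset$ is independent, so property~(\ref{item:unique union property}) in the definition of a cycle system forbids this whenever $\sigma\neq\emptyset$; thus $\sigma=\emptyset$, and the $w_i$ are linearly independent in $W$ (equivalently, in the circuit space, which contains them).

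For the binary case I would invoke the fact that when $M$ is binary the circuit space has dimension $g(M)=|E|-\rk(M)$. Concretely, fix an $\mathbb{F}_2$-matrix $A$ with columns indexed by $E$ that represents $M$; then the circuit space coincides with the kernel $\{x\in\mathbb{F}_2^E:Ax=0\}$, since the support of any nonzero kernel vector is a dependent set (hence a cycle), and the minimal such supports are exactly the circuits of $M$. Consequently $\dim(\text{circuit space})=|E|-\rk(A)=g(M)=g$. Since $\cs$ has exactly $g$ elements and $\{w_i\}$ is linearly independent, it must be a basis of the circuit space. I would cite \cite{Oxley} for this dimension count rather than reprove it.

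The main obstacle is essentially cosmetic: linear independence is immediate once one notes the containment $\cs_\sigma\subseteq\operatorname{supp}\bigl(\sum_{i\in\sigma}w_i\bigr)$ together with the fact that dependent sets are nonempty, and the only genuine external input for the ``basis'' half is the well-known identification of the circuit space of a binary matroid with the kernel of a representing matrix. Care is needed only to remember that the restriction to binary matroids is exactly what makes this kernel interpretation (and hence the dimension $g$) valid.
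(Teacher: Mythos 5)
Your proof is correct and follows essentially the same route as the paper: the linear independence is deduced from the unique union property (a nonempty $\sigma$ forces $\cs_\sigma$ to be dependent, hence nonempty, which obstructs $\sum_{i\in\sigma}w_i=0$), and the basis claim follows from the standard fact that a binary matroid's circuit space has dimension $g$. Your observation that $\cs_\sigma\subseteq\operatorname{supp}\bigl(\sum_{i\in\sigma}w_i\bigr)$ is just a cleaner phrasing of the paper's ``there is a coordinate where exactly one $w_i$ has a $1$.''
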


    \begin{proof}
    First note that $w_i$ is indeed an element of $W$, since we are assuming $C_i$ is a circuit. The claim then follows immediately from the unique union property: if $0=\sum_{i\in\sigma}w_i$ is a nontrivial linear dependence, then the set $\cs_\sigma$ is dependent, and in particular nonempty. This means that there exists an index $i\in\sigma$ such that the corresponding indicator vector $w_i$ has a $1$ in a component where all other $w_j$ have zeros. But this contradicts the supposed linear dependence. 
    
    For the last statement, a matroid $M$ is binary if and only if the dimension of the circuit space is equal to the corank $g$.
    \end{proof}

    We note that if $\cs$ is only assumed to be a \emph{cycle} system, then the vectors $w_i$ corresponding to~$C_i$ may not lie in $W$. Also, for non-binary matroids the dimension of the circuit space is strictly larger than the corank.  While this result is known (see, e.g., \cite{Garamvolgyi} for a proof), we have not found it stated explicitly in the literature. In particular if $M$ is non-binary, a circuit system is never a basis for the circuit space.

\subsection{Parallel connections and 2-sums}\label{subsection:new from old}
Let~$M$ and~$N$ be matroids with $E(M)\cap E(N)=\{p\}$ where~$p$ is neither a
loop nor a bridge in either matroid. Define the \emph{parallel
connection}~$M\parallel_p N$ to be the matroid on the ground set
$E(M)\cup E(N)$ with circuits
\[
  \cir(M)\cup \cir(N)\cup \{(C\cup D)\setminus \{p\}:p\in C\in\cir(M)\text{ and
  } p\in D\in\cir(N)\}.
\]
Suppose~$M$ and $N$ have cycle systems $\cs$
and~$\csd$, respectively.  It is then straightforward to check
that~$M\!\parallel_{p}\!N$ has cycle system~$\cs\cup\csd$.  Define the
\emph{$2$-sum} of $M$ and~$N$ along~$p$ to be the matroid
$M\oplus_{2}N=(M\!\parallel_{p}\! N)\setminus \{p\}$.  See Figure~\ref{fig: 2-sum} for an example.

\begin{prop}\label{prop:two-sum}  With notation as above, suppose that~$M$ has cycle
  system $\cs=\{C_{1},\ldots,C_{g}\}$ and~$N$ has cycle
  system~$\csd=\{D_{1},\ldots,D_{f}\}$. Suppose that~$p\in C_{1}\cap
  D_{1}$ and $p\in\cs_{[g]}$. 
  Let~$\widetilde{\cs}=\{\tilde{C}_{2},\ldots,\tilde{C}_{g}\}$
  where~$\tilde{C}_{i}=C_{i}$ for $i=2,\ldots,g$, and let
  $\widetilde{\csd}=\{\tilde{D}_{1},\ldots,\tilde{D}_{f}\}$ where
  \[
    \widetilde{D}_{i} = 
    \begin{cases}
      \hfil D_{i}& \text{if~$p\not\in D_{i}$}\\
     (C_{1}\cup D_{i})\setminus \{p\}& \text{if~$p\in D_{i}$}.
    \end{cases}
  \]
  Then~$M\oplus_2N$ has cycle system $\mathcal{E} = \widetilde{\cs}\cup\widetilde{\csd}$. 
\end{prop}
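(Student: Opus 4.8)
The plan is to exhibit $\mathcal{E}$ as the deletion of $p$ from a cycle system on the parallel connection $M\parallel_p N$ and then invoke Proposition~\ref{prop:del-con cs}(\ref{item:del-con cs 1}). Put $\mathcal{F}:=\cs\cup\widetilde{\csd}=\{C_1,\dots,C_g,\widetilde D_1,\dots,\widetilde D_f\}$, so that $\mathcal{F}\setminus\{C_1\}=\widetilde{\cs}\cup\widetilde{\csd}=\mathcal{E}$. Since $E(M)\cap E(N)=\{p\}$, whenever $p\in D_j$ we have $C_1\cap D_j=\{p\}$, hence $\widetilde D_j=(C_1\cup D_j)\setminus\{p\}=C_1\bigtriangleup D_j=\ast\{C_1,D_j\}$; when $p\notin D_j$ we simply have $\widetilde D_j=D_j$. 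In particular $p\notin\widetilde D_j$ for every $j$, while $p\in C_1$ and, because $p\in\cs_{[g]}$, $p\notin C_i$ for $i\ne1$. Thus among the members of $\mathcal{F}$ the element $p$ lies only in $C_1$. Granting for the moment that $\mathcal{F}$ is a cycle system for $M\parallel_p N$, we get $p\in\mathcal{F}_{[g+f]}\cap C_1$, so Proposition~\ref{prop:del-con cs}(\ref{item:del-con cs 1}) shows $\mathcal{F}\setminus\{C_1\}=\mathcal{E}$ is a cycle system for $(M\parallel_p N)\setminus\{p\}=M\oplus_2 N$, which is exactly what we want.

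So the whole proof reduces to showing $\mathcal{F}$ is a cycle system for $M\parallel_p N$. Its members are pairwise distinct (by inspecting which of $E(M)\setminus\{p\}$ and $E(N)\setminus\{p\}$ each set meets), and since $\cs\cup\csd$ is already a cycle system for $M\parallel_p N$ we have $g(M\parallel_p N)=g+f=|\mathcal{F}|$, so the cardinality is right. Each member is a cycle of $M\parallel_p N$: the $C_i$ are cycles of $M$ and the $\widetilde D_j$ with $p\notin D_j$ are cycles of $N$, and $\cir(M)\cup\cir(N)\subseteq\cir(M\parallel_p N)$; for $p\in D_j$, write $C_1=\bigcup_a C^{(a)}$ and $D_j=\bigcup_b D^{(b)}$ as unions of circuits of $M$ and of $N$ respectively, noting some $C^{(a_0)}$ and some $D^{(b_0)}$ contain $p$. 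Every $C^{(a)}$ or $D^{(b)}$ avoiding $p$ is itself a circuit of $M\parallel_p N$ lying in $\widetilde D_j$; and for each $C^{(a)}\ni p$ the set $(C^{(a)}\cup D^{(b_0)})\setminus\{p\}$ is, by the definition of the parallel connection, a circuit of $M\parallel_p N$ contained in $\widetilde D_j$ (similarly $(C^{(a_0)}\cup D^{(b)})\setminus\{p\}$ for $D^{(b)}\ni p$). The union of all of these circuits is exactly $(C_1\cup D_j)\setminus\{p\}=\widetilde D_j$, so $\widetilde D_j$ is a cycle of $M\parallel_p N$.

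The main content is the unique-union condition for $\mathcal{F}$. Fix a nonempty index set $\tau$ for $\mathcal{F}$ and split it into a set $I$ of indices of the $C_i$'s and a set $J$ of indices of the $\widetilde D_j$'s; set $J_p=\{j\in J:p\in D_j\}$. Partition the ground set as $E(M\parallel_p N)=(E(M)\setminus\{p\})\sqcup\{p\}\sqcup(E(N)\setminus\{p\})$. Restricting the active collection to each block, one computes that $\mathcal{F}_\tau\cap(E(N)\setminus\{p\})=\csd_J\setminus\{p\}$, that $p\in\mathcal{F}_\tau$ iff $1\in I$, and that $\mathcal{F}_\tau\cap(E(M)\setminus\{p\})$ is the unique union of $\{C_i:i\in I\setminus\{1\}\}$ together with $|J_p|$ copies of $C_1\setminus\{p\}$, plus one further copy of $C_1\setminus\{p\}$ when $1\in I$. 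If $J=\emptyset$ then $\mathcal{F}_\tau=\cs_I$, which is dependent since $I\ne\emptyset$ and $\cs$ is a cycle system. If $J\ne\emptyset$, then $\csd_J$ is dependent in $N$ and contains a circuit $D'$ of $N$. If $p\notin D'$ then $D'\subseteq\csd_J\setminus\{p\}\subseteq\mathcal{F}_\tau$ and we are done. Otherwise $p\in D'\subseteq\csd_J$, which forces $|J_p|=1$; if moreover $1\in I$ then $p\in\mathcal{F}_\tau$ and $D'=(D'\setminus\{p\})\cup\{p\}\subseteq\mathcal{F}_\tau$; and if $1\notin I$ then there is exactly one copy of $C_1\setminus\{p\}$ in the list, so (using that $p\in C_i$ only for $i=1$) the $(E(M)\setminus\{p\})$-block of $\mathcal{F}_\tau$ equals $\cs_{I\cup\{1\}}\setminus\{p\}$, and picking any circuit $C'$ of $M$ inside the dependent set $\cs_{I\cup\{1\}}$ finishes: if $p\notin C'$ it lies in $\mathcal{F}_\tau$, while if $p\in C'$ then $(C'\cup D')\setminus\{p\}$ is a circuit of $M\parallel_p N$ contained in $(\cs_{I\cup\{1\}}\setminus\{p\})\cup(\csd_J\setminus\{p\})\subseteq\mathcal{F}_\tau$. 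In every case $\mathcal{F}_\tau$ contains a circuit of $M\parallel_p N$ and so is dependent, completing the proof that $\mathcal{F}$, and therefore $\mathcal{E}$, is a cycle system.

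The step I expect to be the main obstacle is the unique-union verification just sketched, and within it the case where an index of some $\widetilde D_j$ with $p\in D_j$ is active together with the index of $C_1$: then the copies of $C_1\setminus\{p\}$ cancel in the $E(M)$-block and strip most of $C_1$ out of $\mathcal{F}_\tau$. The saving observations are that two or more such copies force $p\notin\csd_J$, so any circuit found inside $\csd_J$ already avoids $p$ and hence survives into $\mathcal{F}_\tau$, while exactly one such copy rebuilds precisely the unique union $\cs_{I\cup\{1\}}\setminus\{p\}$ on the $M$-side, whose circuits can be transported into $\mathcal{F}_\tau$ — if necessary by pairing a circuit of $M$ through $p$ with a circuit of $N$ through $p$ via the parallel-connection circuit rule.
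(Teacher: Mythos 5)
Your proof is correct, and it takes a genuinely different route from the paper's. The paper verifies the unique-union condition for $\mathcal{E}$ directly in $M\oplus_2 N$: it splits into cases according to whether $\sigma$ is empty and according to the number $t_p$ of active $D_i$'s containing $p$, and it invokes the parallel-connection circuit $(C\cup D)\setminus\{p\}$ whenever the circuit it locates passes through $p$. You instead interpolate through the parallel connection: you show that $\mathcal{F}=\cs\cup\widetilde{\csd}$ is a cycle system for $M\parallel_p N$ in which $p\in\ast\mathcal{F}\cap C_1$, and then obtain $\mathcal{E}=\mathcal{F}\setminus\{C_1\}$ as a cycle system for the $2$-sum by a single application of Proposition~\ref{prop:del-con cs}(1). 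This buys you two things. First, since circuits of $M$ and of $N$ through $p$ are still circuits of the parallel connection, the splicing of circuits across $p$ is needed only in one terminal sub-case rather than throughout. Second, your ordering of the case analysis---always look for a circuit inside $\csd_J$ first, and fall back to the $M$-side only when that circuit contains $p$, which forces $|J_p|=1$---transparently disposes of the delicate situation where two or more active $\widetilde{D}_j$'s each contribute a copy of $C_1\setminus\{p\}$ and cancel on the $M$-side (there, $p\notin\csd_J$, so the $N$-side circuit automatically avoids $p$ and survives into $\mathcal{F}_\tau$); the paper's direct argument glosses over exactly this cancellation when it asserts $\mathcal{E}_{\sigma,\tau}\supseteq\cs_{\sigma}$ for $t_p\ge 2$. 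The cost of your route is some extra bookkeeping: you must check that the $\widetilde{D}_j$ are cycles of $M\parallel_p N$ and that $|\mathcal{F}|=g(M\parallel_p N)$, steps the paper's in-place verification does not need.
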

\noindent\textsc{note:} The choice of~$p$ being in the first element of each
cycle system is only for  notational convenience.
\begin{proof}
  Let~$\sigma\subseteq [g]\setminus\{1\}$ and~$\tau\subseteq[f]$, and suppose
  that $\sigma$ or $\tau$ is nonempty. Let~$\mathcal{E}_{\sigma,\tau}$ denote
  the unique union of $\{\tilde{C}_{i}: i\in \sigma\}\cup\{\tilde{D}_{i}:i\in
  \tau\}$.  We must show~$\mathcal{E}_{\sigma,\tau}$ is dependent in~$M\oplus_2
  N$. 

  There are cases to consider. Let~$t_{p}=|\{i\in\tau: p\in D_{i}\}|$. First,
  suppose~$\sigma=\emptyset$.  In this case, $\tau\neq\emptyset$, and
  $\mathcal{E}_{\sigma,\tau}=\tilde{\mathcal{D}}_{\tau}=*\{\tilde{D}_{i}:i\in\tau\}$.
  Since~$\mathcal{D}$ is a cycle system on~$N$, there exists a circuit~$D$
  of~$N$ with~$D\subseteq \mathcal{D}_{\tau}$. If~$t_{p}\neq 1$,
  then~$\mathcal{E}_{\sigma,\tau}=\tilde{\mathcal{D}}_{\tau}=\mathcal{D}_{\tau}$,
  and ~$p\not\in D$. So $D$ is a circuit of~$M\oplus_{2}N$.  If~$t_{p}=1$, then
  $\mathcal{E}_{\sigma,\tau}=(C_{1}\cup\mathcal{D}_{\tau})\setminus\{p\}\supseteq
  (C_{1}\cup D)\setminus \{p\}$.  The circuit elimination property for
  $M\!\parallel_p\!N$ guarantees that~$(C_{1}\cup D)\setminus \{p\}$ contains a
  circuit of~$M\oplus_{2}N$.  Thus, in the case~$\sigma=\emptyset$, we 
  have that~$\mathcal{E}_{\sigma,\tau}$ is dependent.

  Next, suppose~$\sigma\neq\emptyset$. Since~$\cs$ is a cycle system,
  $\cs_{\sigma}$ contains a circuit~$C$ of~$M$.  If $t_{p}\neq1$,
  then~$\mathcal{E}_{\sigma,\tau}\supseteq \cs_{\sigma}$, and
  since~$1\not\in\sigma$, we have~$p\not\in C$. Hence, $C$ is a circuit
  of~$M\oplus_{2}N$,  and $\mathcal{E}_{\sigma,\tau}$ is dependent.
  If~$t_{p}=1$,
  then $\mathcal{E}_{\sigma,\tau}=(\cs_{\sigma\cup\{1\}}\cup\mathcal{D}_{\tau})\setminus\{p\}$.
  Let~$C'$ be a circuit in~$\cs_{\sigma\cup\{1\}}$, and let~$D$ be a
  circuit in~$\mathcal{D}_{\tau}$.  We
  have~$\mathcal{E}_{\sigma,\tau}\supseteq(C'\cup D)\setminus\{p\}$, and as
  above, the circuit elimination property guarantees the existence of a
  circuit of~$M\oplus_{2}N$ in $(C'\cup D)\setminus\{p\}$.
\end{proof}

\begin{cor}\label{cor:K33-free}
  If $G$ is a graph that is $K_{3,3}$-free, i.e., it does not contain~$K_{3,3}$ as a minor, then the cycle matroid
  for~$G$ has a cycle system.
\end{cor}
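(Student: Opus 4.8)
\emph{Strategy.} The plan is to combine a structure theorem for $K_{3,3}$-minor-free graphs with Proposition~\ref{prop:two-sum}. By Wagner's theorem, the $3$-connected graphs with no $K_{3,3}$ minor are exactly the $3$-connected planar graphs and $K_5$; combined with the fact that every connected matroid decomposes, via matroid $2$-sums, into a tree of $3$-connected pieces together with circuits and cocircuits (see, e.g., \cite{Oxley}), this tells us that $M(G)$ is assembled by iterated $2$-sums from cycle matroids of planar graphs and from $M(K_5)$ (the circuits $U_{n-1,n}$ and cocircuits $U_{1,n}$ appearing as pieces are themselves planar cycle matroids, and every $3$-connected piece is a minor of $M(G)$, hence a $K_{3,3}$-minor-free $3$-connected graphic matroid, hence planar or $M(K_5)$). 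Using Corollary~\ref{cor:circuit systems}(\ref{cor:cir1}) I would first reduce to the connected components of $M(G)$; since bridges are coloops (and contribute nothing) and loops carry the trivial one-element circuit system, it then suffices to treat a single biconnected block, i.e.\ to build a cycle system for $M(G)$ assuming $M(G)$ is connected and assembled purely by $2$-sums from the pieces above.

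\emph{The strengthened induction and its base case.} The $2$-sums organize these pieces into a tree, which I would root and induct along: the claim is that the matroid $R$ assembled from any subtree admits a circuit system in which the virtual edge $p_0$ joining that subtree to its parent appears in exactly one member (this is precisely the hypothesis $p\in\cs_{[g]}$ of Proposition~\ref{prop:two-sum}). The base case is the following claim: if $P$ is a planar graph or $K_5$ and $e\in E(P)$, then $M(P)$ has a circuit system in which $e$ appears exactly once. For planar $P$ I would choose a plane embedding in which $e$ borders the unbounded face---always possible, since $e$ borders exactly two faces and either one may be taken as the outer face---so that $e$ borders exactly one bounded face; then $e$ lies in exactly one member of the bounded-face circuit system of item~(\ref{item:cone system}) in Section~\ref{sec:examples}. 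For $P=K_5$ I would pick a vertex $v$ not incident to $e$ and write $K_5=\textrm{Cone}(K_4)$ with cone vertex $v$; then $e$ is an edge of the base $K_4$, and in the circuit system $\{C_f:f\in E(K_4)\}$ of item~(\ref{example:cone}) the element $e$ lies only in $C_e$. (Cycles $C_n$ and cocircuits are planar, so are covered the same way.)

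\emph{The inductive step.} Let $P$ be the piece at a node with parent virtual edge $p_0$ and child subtrees $R_1,\dots,R_k$ attached along virtual edges $p_1,\dots,p_k$. By the claim, $M(P)$ has a circuit system with $p_0$ of multiplicity one; by induction each $R_i$ has a circuit system $\cs_i$ with $p_i$ of multiplicity one. I would assemble $R=M(P)\oplus_2 R_1\oplus_2\cdots\oplus_2 R_k$ by applying Proposition~\ref{prop:two-sum} $k$ times, at the $i$-th step letting $R_i$ play the role of the matroid whose unique $p_i$-containing member is discarded (its hypothesis holds by the inductive invariant, and $p_i$ lies in some member of the running cycle system because it is a non-bridge, by Corollary~\ref{cor:non-bridges}). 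The explicit description in Proposition~\ref{prop:two-sum} shows that each step preserves the number of members containing $p_0$: since $p_0\notin E(R_i)$ and $p_0\ne p_i$, a member missing $p_i$ is kept verbatim and a member containing $p_i$ is merely enlarged by a subset of $E(R_i)$. Hence the assembled cycle system still has $p_0$ of multiplicity one---and it is in fact a circuit system by Theorem~\ref{thm:circuits}, since iterated $2$-sums of connected matroids are connected. At the root there is no $p_0$ to track, so we simply obtain a cycle system for $M(G)$, which proves the corollary.

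\emph{Main obstacle.} The delicate point is the multiplicity bookkeeping: Proposition~\ref{prop:two-sum} requires the gluing element to occur in a \emph{single} member of one of the two systems, so a naive induction on the $2$-sum decomposition does not close; one must both produce base-case circuit systems with a prescribed edge of multiplicity one (hence the careful choice of outer face and of cone vertex) and verify that this property is propagated through the sequence of $2$-sums. A secondary matter is formulating the structure theorem at the matroid level with enough care that every piece is literally a planar cycle matroid or $M(K_5)$ and that the gluing elements are virtual edges rather than edges of $G$; the reduction to a single biconnected block via Corollary~\ref{cor:circuit systems} should be routine.
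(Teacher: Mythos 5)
Your proposal is correct and follows essentially the same route as the paper: reduce to the $2$-connected case, invoke the structure theorem expressing $2$-connected $K_{3,3}$-minor-free graphs as $2$-sums (and parallel connections) of planar graphs and copies of $K_5$, produce for each piece a circuit system in which a prescribed element lies in the unique union (outer-face embedding for planar pieces, choice of cone vertex/symmetry for $K_5$), and iterate Proposition~\ref{prop:two-sum}. Your additional bookkeeping---carrying the invariant that the parent virtual edge has multiplicity one through each assembled subtree---is a correct way to close the induction, though the paper implicitly sidesteps it by always letting the newly attached planar or $K_5$ piece play the role of the matroid satisfying $p\in\cs_{[g]}$, so that only the new piece, never the accumulated matroid, needs the multiplicity-one condition.
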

\begin{proof} If a graph~$G$ is not~$2$-connected, pick a vertex~$v$ whose
  removal disconnects the graph, and let~$G_{1},\ldots,G_{k}$ be the resulting
  connected components.  Then~$G$ is $K_{3,3}$-free if and only if each~$G_{i}$
  is $K_{3,3}$-free.  It is straightforward to check
  that~$M(G)=\bigoplus_{i=1}^{k}M(G_{i})$ and~$M(G)$ has a cycle system if and
  only if each~$M(G_{i})$ has a cycle system.  Hence, we may assume~$G$
  is~$2$-connected. 

  We use the fact that a $2$-connected graph is~$K_{3,3}$-free if and only if it can
  be constructed from planar graphs and copies of~$K_{5}$ via parallel connections
  and $2$-sums (see \cite{Khuller} and \cite{Vazirani} for details).  We must show
  the conditions of Proposition~\ref{prop:two-sum} apply.  Let~$H$ be either a
  planar graph or $K_{5}$, and let~$p\in H$.  We claim that~$H$ has a cycle
  system~$\cs$ such that~$p$ is an element of the unique
  union~$*\cs$.  If~$H$ is a planar graph, embed~$H$ in the plane so
  that~$p$ is adjacent to the unbounded face of~$H$, and take~$\cs$ to be
  the cycles corresponding to the bounded faces of~$H$. If~$H=K_{5}$, first note that $K_{5}$ has a cycle
  system since it is the cone over a planar graph.  Then our claim follows by the
  symmetry of the edges of~$K_{5}$.
\end{proof}

\begin{example}
Figure~\ref{fig: 2-sum} displays the $2$-sum of the complete graphs $K_4$ and~$K_5$ on vertex sets $\{1,2,6,7\}$ and  $\{2,3,4,5,6\}$, respectively.  For $K_4$, take the cycle system $\cs=\{126, 127, 167\}$ consisting of the triangles containing the vertex~$1$, and for $K_5$, take the cycle system $\mathcal{D}=\{423,\allowbreak 425,\allowbreak 426,\allowbreak 435,\allowbreak 436,\allowbreak 456\}$ consisting of the triangles containing vertex~$4$.  (These cycle systems arise from considering $K_4$ and $K_5$ as cones and applying the construction of Section~\ref{sec:examples}~(\ref{example:cone}).)  Letting $p$ be the edge~$26$, Proposition~\ref{prop:two-sum} yields the cycle system for our $2$-sum  which is the union of
\[
\widetilde{\cs}=\cs\setminus 126=
\{127,167\}
\]
and the cycles
derived from $\mathcal{D}$ by replacing the triangle~$426$ with the $4$-cycle~$4216$:
\[
\widetilde{\mathcal{D}}=\{423,425,4216,435,436,456\}.
\]
   
    \begin{figure}[ht]
    \centering
    \begin{tikzpicture}
       \foreach \i in {0,1,2,3,4} {
           \node[ball] (\i) at ({\i*72+144}:1) {};
       }
    \node[ball] (5) at (-1.9846,0.5877852) {};
    \node[ball] (6) at (-1.9846,-0.58778525) {};
    \node at (0) [above=0.4em] {$2$};
    \node at (1) [below=0.4em] {$6$};
    \node at (2) [below=0.4em] {$5$};
    \node at (3) [right=0.4em] {$4$};
    \node at (4) [above=0.4em] {$3$};
    \node at (5) [above=0.4em] {$1$};
    \node at (6) [below=0.4em] {$7$};
    \draw (0)--(2);
    \draw (0)--(3);
    \draw (0)--(4);
    \draw (0)--(6);
    \draw (1)--(2);
    \draw (1)--(3);
    \draw (1)--(4);
    \draw (1)--(5);
    \draw (2)--(3);
    \draw (2)--(4);
    \draw (3)--(4);
    \draw (0)--(5)--(6)--(1);
    \end{tikzpicture}
    \caption{2-sum of $K_4$ and $K_5$.}\label{fig: 2-sum}
    \end{figure}
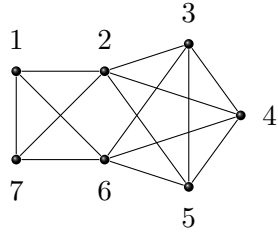
\end{example}

\section{The poset of coparking functions}\label{sec: coparking}

In this section we define our notion of a coparking function associated to a cycle system on a matroid $M$. We show that the set of such sequences forms a pure multicomplex whose degree sequence recovers the $h$-vector of $M$. We begin with our main definition.


\begin{defn}\label{defn:coparking}
Let $M$ be a matroid with cycle system $\cs = \{C_{1},\ldots,C_g\}$. A
\emph{coparking function} with respect to $\cs$ is a function
\begin{align*}
a:\cs\rightarrow \N\\
C_i\mapsto a_i
\end{align*}
with the property that for all nonempty $\sigma\subseteq[g]$, there exists some
$i\in\sigma$ such that $a_i<|C_i\cap\cs_\sigma|$.  
\end{defn}

We will always fix an
ordering $\cs=(C_1,\dots,C_g)$ and write coparking functions as vectors
$a=(a_1,\dots,a_g)\in\N^g$.  However, to emphasize that our results do
not fundamentally depend on this choice of  ordering, we will continue to write the
cycle system as a set~$\{C_{1},\ldots,C_{g}\}$.

The {\em degree} of a coparking function~$a$ is $\deg(a)=\sum_{i=1}^ga_i$.  The
set of coparking functions with respect to~$\cs$ is
denoted~$P^{*}(\cs)$ or simply $P^{*}$ if $\cs$ is
clear from context. The {\em coparking function degree sequence with respect
to~$\cs$} is 
  \[
    d(\cs):=(d_0,d_1,\dots)
  \]
  where~$d_i$ is the number of coparking functions having degree~$i$:
  \[
    d_i:=|\{a\in P^*: \deg(a)=i\}|.
  \]
  As a special case, if $\cs=\emptyset$, then $g=0$ and~$\N^{g}=\N^{0}=\{()\}=P^{*}$
  with~$\deg(())=0$.

  In general, define a partial order~$\le$ on~$P^*$ by~$a\le b$ if~$a_i\leq b_i$
  for~$i=1,\dots,g$. We write $a< b$ if $a\le b$ and $a\ne b$, and we say that $a$ is
  \emph{maximal} if there is no $b\in P^*$ such that $a< b$.

\subsection{Algorithm}
According to Definition \ref{defn:coparking}, determining whether a given integer sequence is a coparking function requires one to check all subsets of the $g$ elements in the underlying cycle system. It turns out that Dhar's algorithm \cite{Dhar90} on graphs can be adapted to provide a faster (polynomial in $g$) way to check this condition. To determine whether a non-negative integer vector~$a=(a_1,\dots,a_g)$ is a
coparking function with respect to~$\cs$, first set~$\sigma=[g]$. Next,
attempt to find~$i\in\sigma$ such that~$a_i<|C_i\cap \cs_{\sigma}|$.  If no
such~$i$ exists, then~$\sigma$ demonstrates that~$a$ is not a coparking
function.  Otherwise, replace~$\sigma$ by~$\sigma\setminus\left\{ i \right\}$.
(More generally, one may choose any nonempty subset~$\gamma$ of~$\sigma$ with
the property that~$a_i<|C_i\cap \cs_{\sigma}|$ for all $i\in\gamma$ and
replace~$\sigma$ by~$\sigma\setminus\gamma$.)  If~$\sigma=\emptyset$, then~$a$
is a coparking function.  Otherwise, repeat the previous steps with this
new~$\sigma$.

The algorithm is finite since at each round, either we have demonstrated that~$a$
is not a coparking function, or the cardinality of~$\sigma$ decreases.  To prove the
correctness of the algorithm, it remains to be shown that if the algorithm halts
with~$\sigma=\emptyset$, then~$a$ is a coparking function.  So
suppose~$\sigma=\emptyset$ when the algorithm halts, and let~$\tau$ be a nonempty
subset of~$[g]$.  As the algorithm runs, elements are removed from~$\sigma$
until a first element~$i\in\tau$ is removed.  Consider the set~$\sigma$ just
before~$i$ is removed.  We have~$a_i<|C_i\cap \cs_{\sigma}|$.  Then,
since~$i\in\tau\subseteq\sigma$, we have
\[
  |C_i\cap \cs_{\tau}|\geq|C_i\cap \cs_{\sigma}|>a_i.
\]
Thus,~$a$ is a coparking function. Pseudocode appears as
Algorithm~\ref{alg:coparking} below.

\RestyleAlgo{ruled}
\begin{algorithm}[H]\label{alg:coparking}
\caption{Verify coparking function}\label{alg:coPF}
\DataSty{input: cycle system $\cs=\{C_1,\dots,C_g\}$ and
$a=(a_1,\dots,a_g)\in \N^{g}$}\;
\DataSty{output: TRUE if~$a$ is a coparking function, FALSE if not}\;
$\sigma\gets[g]$\;
\While{$\sigma\neq \emptyset$}{
  \For{$i\in\sigma$}{
    \If{$a_i<|C_i\cap\cs_\sigma|$}{
      $\sigma=\sigma\setminus\{i\}$\;
      \KwSty{break}\tcc*{break out of the for-loop}
     }
  }\DataSty{output: FALSE}
}\DataSty{output: TRUE}
\end{algorithm}

\begin{example}\label{example: intro}
Consider the graph depicted below: 
\begin{center}
  \begin{tikzpicture}[scale=0.8]
    \tikzset{mystyle/.style={midway,circle,minimum size=4mm,inner
    sep=0pt,fill=gray!20,thin,solid}};
    \node[ball,label={below:$v_0$}] (0) at (0,0) {};
    \node[ball,label={right:$v_1$}] (1) at (2,0.5) {};
    \node[ball,label={above:$v_2$}] (2) at (1,1.732) {};
    \node[ball,label={above:$v_3$}] (3) at (-1,1.732) {};
    \node[ball,label={left:$v_4$}] (4) at (-2,0.5) {};

    \draw[thick] (0)--(1) node[mystyle] {\small $1$};
   \draw[thick] (0)--(2) node[mystyle] {\small $2$};
    \draw[thick] (0)--(3) node[mystyle] {\small $3$};
    \draw[thick] (0)--(4) node[mystyle] {\small $4$};
   \draw[thick] (1)--(2) node[mystyle] {\small $5$};
   \draw[thick] (2)--(3) node[mystyle] {\small $6$};
    \draw[thick] (3)--(4) node[mystyle] {\small $7$};

    \node at (3,0.3) {.};
  \end{tikzpicture}
\end{center}
The three bounded faces define a circuit system $\cs=\{C_1,C_2,C_3\}$, with
\[
C_1=\{3,4,7\},\quad C_2=\{2,3,6\},\quad C_3=\{1,2,5\}.
\]
The Hasse diagram for the poset of coparking functions $P^*(\cs)$ appears in Figure~\ref{fig:poset of copf}.

\begin{figure}[ht]
\begin{center}
    \begin{tikzpicture}[
    scale=0.35,
    every node/.style={inner sep=1.5pt, font=\small}
]
\node (202) at (-9, 20) {(2,0,2)};
\node (211) at (-3, 20) {(2,1,1)};
\node (121) at (3, 20) {(1,2,1)};
\node (112) at (9, 20) {(1,1,2)};

\node (201) at (-12, 15) {(2,0,1)};
\node (210) at (-8, 15) {(2,1,0)};
\node (120) at (-4, 15) {(1,2,0)};
\node (111) at (0, 15) {(1,1,1)};
\node (021) at (4, 15) {(0,2,1)};
\node (102) at (8, 15) {(1,0,2)};
\node (012) at (12, 15) {(0,1,2)};

\node (200) at (-10, 10) {(2,0,0)};
\node (110) at (-6, 10) {(1,1,0)};
\node (020) at (-2, 10) {(0,2,0)};
\node (101) at (2, 10) {(1,0,1)};
\node (011) at (6, 10) {(0,1,1)};
\node (002) at (10, 10) {(0,0,2)};

\node (100) at (-4, 5) {(1,0,0)};
\node (010) at (0, 5) {(0,1,0)};
\node (001) at (4, 5) {(0,0,1)};

\node (000) at (0, 0.5) {(0,0,0)};

\draw (202) -- (201);
\draw (202) -- (102);
\draw (211) -- (201);
\draw (211) -- (210);
\draw (211) -- (111);
\draw (121) -- (120);
\draw (121) -- (021);
\draw (121) -- (111);
\draw (112) -- (102);
\draw (112) -- (012);
\draw (112) -- (111);

\draw (201) -- (200);
\draw (201) -- (101);
\draw (210) -- (200);
\draw (210) -- (110);
\draw (120) -- (110);
\draw (120) -- (020);
\draw (111) -- (110);
\draw (111) -- (101);
\draw (111) -- (011);
\draw (021) -- (020);
\draw (021) -- (011);
\draw (102) -- (101);
\draw (102) -- (002);
\draw (012) -- (011);
\draw (012) -- (002);

\draw (200) -- (100);
\draw (110) -- (100);
\draw (110) -- (010);
\draw (020) -- (010);
\draw (101) -- (100);
\draw (101) -- (001);
\draw (011) -- (010);
\draw (011) -- (001);
\draw (002) -- (001);

\draw (100) -- (000);
\draw (010) -- (000);
\draw (001) -- (000);
\end{tikzpicture}
\end{center}
\caption{Poset of coparking functions for the circuit system in Example~\ref{example: intro}.}\label{fig:poset of copf}
\end{figure}
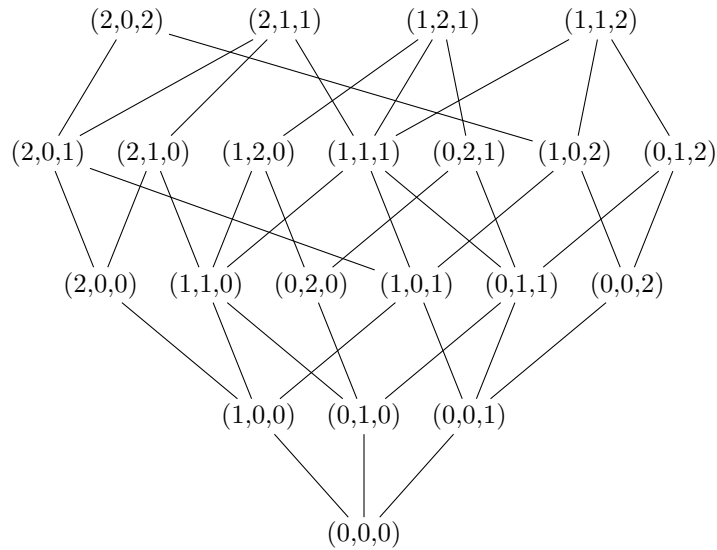

We illustrate Algorithm~\ref{alg:coparking} by verifying that $(2,0,2)$ is a coparking function, while $(2,2,0)$ is not. Running the algorithm on $(2,0,2)$ beginning with $\sigma=[3]$, we find that $a_2=0<|C_2\cap\cs_{[3]}|=|\{6\}|=1$. Updating $\sigma$ to $\sigma=\{1,3\}$, we now find that $a_1=2<|C_1\cap\cs_{\{1,3\}}|=|C_1|=3$. Updating $\sigma$ again to $\sigma=\{3\}$, we find that $a_3=2<|C_3\cap\cs_{\{3\}}|=3$, which leads to $\sigma=\emptyset$, and the conclusion that $(2,0,2)$ is coparking.

If we instead run the algorithm on $(2,2,0)$, we immediately find that $a_i\ge |C_i\cap\cs_{[3]}|$ for $i=1,2,3$. Hence, the algorithm reports that $(2,2,0)$ is not a coparking function.

The degree sequence is $d(\cs)=(1,3,6,7,4)$, with four maximal coparking functions, each of degree $4$. In particular, we see that $P^*(\cs)$ is a pure multicomplex. The next proposition shows that this holds true in general.
\end{example}

\subsection{Coparking functions form a pure multicomplex}

\begin{prop}\label{prop:pure} Let $M$ be a matroid with cycle system
  $\cs = \{C_{1},\ldots,C_g\}$. Then the poset of coparking functions
  $P^*(\cs)$ is a pure multicomplex.  An element~$a\in P^*(\cs)$
  is maximal if and only if $\deg(a)=\rk(M)-\beta$, where $\beta$ is the number of bridges
  of~$M$.
\end{prop}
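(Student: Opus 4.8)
The plan is to run the version of Dhar's algorithm described above (Algorithm~\ref{alg:coparking}) on a coparking function $a$, which produces a total ordering $i_1,i_2,\dots,i_g$ of $[g]$ such that, writing $\sigma_k:=\{i_k,i_{k+1},\dots,i_g\}$, one has $a_{i_k}<|C_{i_k}\cap\cs_{\sigma_k}|$ for every $k$; that the algorithm exhausts $[g]$ when $a$ is coparking is precisely the correctness statement proved earlier. (If $g=0$ then $P^{*}(\cs)=\{()\}$ and $\rk(M)=\beta$, so the statement is trivial; assume $g\ge 1$.) A first, routine observation is that $P^{*}(\cs)$ is a multicomplex, i.e.\ an order ideal in $\N^{g}$: if $a\in P^{*}(\cs)$ and $b\le a$, then any index witnessing the coparking condition for $a$ on a nonempty $\sigma$ also witnesses it for $b$. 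Together with the degree bound below this also shows $P^{*}(\cs)$ is finite and nonempty (it contains $\mathbf 0$, since each $\cs_{\sigma}$ is dependent, hence nonempty), so maximal elements exist.

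Next I would prove that $\deg(a)\le\rk(M)-\beta$ for every $a\in P^{*}(\cs)$. From the ordering, $\deg(a)=\sum_{k=1}^{g}a_{i_k}\le\sum_{k=1}^{g}\bigl(|C_{i_k}\cap\cs_{\sigma_k}|-1\bigr)$, so it suffices to establish the counting identity $\sum_{k=1}^{g}|C_{i_k}\cap\cs_{\sigma_k}|=\bigl|\bigcup_{i=1}^{g}C_i\bigr|$. This is a ``last appearance'' count: for $f\in C_{i_k}$ we have $f\in\cs_{\sigma_k}$ if and only if $f$ lies in no $C_{i_j}$ with $j>k$, i.e.\ if and only if $k$ is the largest index whose cycle contains $f$; hence every element of $\bigcup C_i$ is counted exactly once and every bridge not at all. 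By Corollary~\ref{cor:non-bridges}, $\bigl|\bigcup C_i\bigr|$ is the number of non-bridges, namely $|E|-\beta$, so $\deg(a)\le|E|-g-\beta=\rk(M)-\beta$.

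The heart of the argument is to show that a maximal $a\in P^{*}(\cs)$ satisfies $a_{i_k}=|C_{i_k}\cap\cs_{\sigma_k}|-1$ for all $k$. Suppose not, say $a_{i_k}+1<|C_{i_k}\cap\cs_{\sigma_k}|$ for some $k$; I claim the vector $a'$ obtained from $a$ by increasing the $i_k$-th coordinate by one is still a coparking function, contradicting maximality. Checking the condition for a nonempty $\sigma$: if $i_k\notin\sigma$ then $a$ and $a'$ agree on $\sigma$, so the old witness works. If $i_k\in\sigma$, let $l$ be the least index with $i_l\in\sigma$, so $\sigma\subseteq\sigma_l$ and $l\le k$; by the monotonicity property of the unique union ($\cs_{\sigma}\cap C_{i_l}\supseteq\cs_{\sigma_l}\cap C_{i_l}$ since $i_l\in\sigma\subseteq\sigma_l$) we get $|C_{i_l}\cap\cs_{\sigma}|\ge|C_{i_l}\cap\cs_{\sigma_l}|>a_{i_l}$. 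If $l<k$, then $i_l\neq i_k$ is a witness for $\sigma$ unaffected by the increase; if $l=k$, then $\sigma\subseteq\sigma_k$ and $|C_{i_k}\cap\cs_{\sigma}|\ge|C_{i_k}\cap\cs_{\sigma_k}|>a_{i_k}+1$, so $i_k$ remains a witness for $a'$. This proves the claim.

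Finally, combining: for a maximal $a$ we get $\deg(a)=\sum_{k}\bigl(|C_{i_k}\cap\cs_{\sigma_k}|-1\bigr)=\rk(M)-\beta$ by the identity above, so all maximal elements share this degree and $P^{*}(\cs)$ is pure; conversely, any $a$ with $\deg(a)=\rk(M)-\beta$ must be maximal, since a strictly larger element would violate the degree bound. I expect the main obstacle to be the bookkeeping in the third paragraph: pinning down exactly which index sets $\sigma$ risk losing their witness when a single coordinate is bumped, and verifying---via the monotonicity of the unique union together with the ordering produced by Dhar's algorithm---that all of them are controlled. Setting up the counting identity correctly as a ``last appearance'' sum is the other place where care is needed.
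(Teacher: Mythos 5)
Your proposal is correct and follows essentially the same route as the paper: run Algorithm~\ref{alg:coparking} to get the ordering $i_1,\dots,i_g$, observe that the sets $U_k=C_{i_k}\cap\cs_{\{i_k,\dots,i_g\}}$ partition the non-bridges (your ``last appearance'' count), and deduce the degree bound $\deg(a)\le\rk(M)-\beta$. The only divergence is in the final step: the paper simply defines $c$ by $c_{i_k}=|U_k|-1$ and notes that the same run of the algorithm certifies $c$ as a coparking function dominating $a$, whereas you argue by contradiction that a maximal $a$ must already satisfy $a_{i_k}=|U_k|-1$, which forces the more delicate witness-tracking in your third paragraph; that argument is valid (the monotonicity property $\cs_{\sigma_l}\cap C_{i_l}\subseteq\cs_{\sigma}\cap C_{i_l}$ for $i_l\in\sigma\subseteq\sigma_l$ is exactly what is needed), but the paper's construction of the explicit dominating element is shorter because the algorithm's correctness proof does that bookkeeping once and for all.
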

\begin{proof} The fact that~$P^{*}(\cs)$ is an order ideal follows
  immediately from the definition of a coparking function.  Now let~$a\in
  P^*(\cs)$.  Run Algorithm~\ref{alg:coparking} to produce a listing
  $i_1,\dots,i_g$ of the elements of~$[g]$ such that
\[
  a_{i_k}< |U_k|
\]
where
\[
  U_k:=C_{i_k}\cap \cs_{\{i_{k},\dots,i_g\}}
\]
for~$k=1,\dots,g$. Let~$\Pi$ be the set of bridges of~$M$. We saw in
Corollary~\ref{cor:non-bridges} that~$E\setminus \Pi=\bigcup_{i=1}^{g}C_{i}$.
Given~$e\in E\setminus\Pi$, let~$j=\max\{k:e\in C_{i_k}\}$.  Then~$e\in U_{j}$ and in no
other~$U_{k}$.  Hence,~$E\setminus \Pi$ is the disjoint union of the~$U_k$:
\[
  \bigsqcup_{k=1}^gU_k=E\setminus \Pi.
\]
Therefore,
\[
  \deg a=\sum_{k=1}^ga_{i_k}\leq\sum_{k=1}^g(|U_k|-1)=|E\setminus
  \Pi|-g=|E|-|\Pi|-g=\rk(M)-\beta.
\]
Define $c=(c_1,\dots,c_g)$ where $c_{i_k}:=|U_k|-1$ for~$k=1,\dots,g$.  Then the
same sequence of steps we used in the algorithm to show~$a$ is a coparking
function now shows that~$c$ is a coparking function.  Further,~$a\le c$,
and~$\deg(c)=\rk(M)-\beta$. It follows that~$P^{*}(\cs)$ is pure with maximal elements
having degree~$\rk(M)-\beta$.
\end{proof}

\subsection{Deletion and Contraction}

\begin{prop}\label{prop:del-con coparking}
  Let~$\cs = \{C_{1},\ldots,C_g\}$ be a cycle system for the matroid~$M$.
  Let~$e\in\cs_{[g]}\cap C_{i}$ for some~$i$, and let~$\cs'$
  and~$\cs''$ be cycle systems for~$M\setminus e$ and~$M/e$,
  respectively, as in Proposition~\ref{prop:del-con cs}
  (where~$\cs''$ is only defined if~$e$ is a non-loop).

  \begin{enumerate}
    \item\label{item:del-con coparking 1} There is an injective mapping of
      coparking functions $P^{*}(\cs')\hookrightarrow
      P^{*}(\cs)$ given by
      \[
        (a_{1},\ldots,a_{i-1},a_{i+1},\ldots,a_{g})\mapsto
	(a_{1},\ldots,a_{i-1},0,a_{i+1},\ldots,a_{g}).
      \]
      The image is the set $\{(a_{1},\ldots,a_{g})\in P^{*}(\cs):
      a_{i}=0\}$.  If~$e$ is a loop, then this mapping is bijective.
    \item\label{item:del-con coparking 2} Suppose~$e$ is a non-loop.  Then
      there is an injective mapping of
      coparking functions $P^{*}(\cs'')\hookrightarrow P^{*}(\cs)$ given by
      \[
        (a_{1},\ldots,a_{g})\mapsto(a_{1},\ldots,a_{i-1},a_{i}+1,a_{i+1},\ldots,a_{g}).
      \]
      The image is the set~$\{(a_{1},\ldots,a_{g})\in P^{*}(\cs):
      a_{i}>0\}$.
  \end{enumerate}
\end{prop}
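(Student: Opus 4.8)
The proof of Proposition~\ref{prop:del-con coparking} will verify both parts directly from Definition~\ref{defn:coparking}, using the explicit descriptions of $\cs'$ and $\cs''$ from Proposition~\ref{prop:del-con cs}. The key observation underlying both parts is how the unique-union sets behave under deleting or contracting the chosen element $e\in\cs_{[g]}\cap C_i$. Since $e\notin C_j$ for all $j\ne i$, the element $e$ sits in $C_i$ only, so removing it changes only the $i$th coordinate of any coparking inequality; and since $\cs''$ is obtained by removing $e$ from every $C_j$ (which only affects $C_i$), the bookkeeping in both cases is localized at index $i$.

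\textbf{Part (1).} First I would record that for any $\sigma\subseteq[g]\setminus\{i\}$ we have $\cs'_\sigma=\cs_\sigma$ (shown already in the proof of Proposition~\ref{prop:del-con cs}), and also $|C_j\cap\cs'_\sigma|=|C_j\cap\cs_\sigma|$ for $j\in\sigma$. So if $(a_j)_{j\ne i}$ is a coparking function for $\cs'$, then setting $a_i=0$ gives a vector $a\in\N^g$; to check it is coparking for $\cs$, take nonempty $\tau\subseteq[g]$. If $i\notin\tau$, the required witness $j\in\tau$ with $a_j<|C_j\cap\cs_\tau|$ comes straight from the $\cs'$-condition. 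If $i\in\tau$, then either $C_i\cap\cs_\tau\ne\emptyset$, in which case $a_i=0<|C_i\cap\cs_\tau|$ and $j=i$ works; or $C_i\cap\cs_\tau=\emptyset$, in which case the remaining elements behave exactly as in $\cs'$ restricted to $\tau\setminus\{i\}$ (each $\cs_\tau$-membership of an element of $C_j$, $j\ne i$, is unaffected by whether $C_i$ is present, since $e$ is the only element of $C_i$ touched and $e\notin C_j$) — more carefully, when $C_i\cap\cs_\tau=\emptyset$ one has $\cs_\tau=\cs_{\tau\setminus\{i\}}\setminus(\text{stuff in }C_i)$, and I would argue $\cs_\tau\subseteq\cs_{\tau\setminus\{i\}}$ with equality on each $C_j$, $j\ne i$, reducing to the $\cs'$-condition on $\tau\setminus\{i\}$. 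Conversely, if $a\in P^*(\cs)$ has $a_i=0$, the same argument run backwards shows $(a_j)_{j\ne i}\in P^*(\cs')$; and injectivity is obvious. For the loop case: if $e$ is a loop then $\{e\}$ is a circuit, and by Corollary~\ref{cor:loops}, $\cs_{[g]}\cap C_i=\{e\}$, forcing $C_i=\{e\}$ (a loop is its own unique circuit); hence $|C_i\cap\cs_\sigma|\le 1$ always, so $a_i\ge 1$ would fail the singleton $\sigma=\{i\}$ (since $e\in\cs_{\{i\}}=C_i$ gives $|C_i\cap\cs_{\{i\}}|=1$), meaning every coparking function has $a_i=0$ and the map is onto, hence bijective.

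\textbf{Part (2).} Here I would use that $\cs''=\{C_j\setminus e\}$, that $e\notin C_j$ for $j\ne i$ (so $C_j\setminus e=C_j$ there), and that corank is preserved. The content is a shift by $1$ in coordinate $i$. Given $a\in P^*(\cs'')$, set $b=(a_1,\dots,a_{i-1},a_i+1,a_{i+1},\dots,a_g)$. For nonempty $\tau\subseteq[g]$: if $i\notin\tau$, then $\cs''_\tau=\cs_\tau$ on all relevant $C_j$ (this needs a small check comparing unique unions in $M$ and $M/e$, but since no $C_j$ with $j\in\tau$ contains $e$, and $e$ is removed only from $C_i$, one gets $\cs''_\tau=\cs_\tau\setminus e=\cs_\tau$ as $e\notin\cs_\tau$), and the witness transfers. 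If $i\in\tau$, then comparing $\cs_\tau$ with $\cs''_\tau$: the only difference is that $e$ might appear in $\cs_\tau$ (exactly when $e$ is in exactly one $C_j$, $j\in\tau$, which since $e\in C_i$ only means: whenever $i\in\tau$, $e\in\cs_\tau$), whereas $e\notin\cs''_\tau$. Thus $\cs''_\tau=\cs_\tau\setminus\{e\}$, so for $j\in\tau$, $j\ne i$ we have $|C_j\cap\cs_\tau|=|C_j\cap\cs''_\tau|$, while $|C_i\cap\cs_\tau|=|C_i\cap\cs''_\tau|+1$ (the $+1$ accounting exactly for $e$, since $e\in C_i\cap\cs_\tau$ but $e\notin C_i\cap\cs''_\tau$ because $e\notin C_i\setminus e$). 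The $\cs''$-condition hands us $j\in\tau$ with $a_j<|(C_j\setminus e)\cap\cs''_\tau|$; if $j\ne i$ this reads $b_j=a_j<|C_j\cap\cs_\tau|$, done; if $j=i$ it reads $a_i<|(C_i\setminus e)\cap\cs''_\tau|=|C_i\cap\cs_\tau|-1$, so $b_i=a_i+1<|C_i\cap\cs_\tau|$, done. Conversely, if $b\in P^*(\cs)$ then since $e\in\cs_{\{i\}}\cap C_i$ we have $|C_i\cap\cs_{\{i\}}|\ge 1$ — actually I should check $b_i\ge 1$: applying the coparking condition to $\tau=\{i\}$ gives a witness, necessarily $i$, so $b_i<|C_i\cap\cs_{\{i\}}|=|C_i|$, which alone does not force $b_i>0$; instead I observe that the image consists precisely of those $b$ with $b_i>0$ (we must state this), and to see $b$ with $b_i>0$ comes from some $a\in P^*(\cs'')$, set $a_i=b_i-1\ge 0$ and run the correspondence backward using $\cs''_\tau=\cs_\tau\setminus\{e\}$ for $i\in\tau$ and $\cs''_\tau=\cs_\tau$ otherwise. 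This identifies the image as $\{b\in P^*(\cs):b_i>0\}$, and injectivity is immediate.

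\textbf{Main obstacle.} The routine-looking but genuinely delicate point is the precise comparison of the unique-union sets $\cs_\tau$, $\cs'_\tau$, $\cs''_\tau$ for $\tau$ containing the index $i$ — in particular, nailing down that when $i\in\tau$ one has exactly $\cs''_\tau=\cs_\tau\setminus\{e\}$ and that the multiplicity count $|C_i\cap\cs_\tau|$ drops by exactly $1$ upon passing to $M/e$, with no other cancellations. This hinges on $e$ being the unique element shared between the deleted/contracted data and $C_i$, i.e., on the hypothesis $e\in\cs_{[g]}\cap C_i$ together with $e\notin C_j$ for $j\ne i$; I expect the bulk of the write-up to be a careful case analysis making these set-equalities explicit, after which both injectivity and the image descriptions fall out immediately.
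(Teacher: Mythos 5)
Your part (2) and the main equivalence in part (1) are correct and follow essentially the paper's route: a direct comparison of $|C_j\cap\cs_\tau|$ with $|C''_j\cap\cs''_\tau|$, with all bookkeeping localized at the index $i$. One structural remark on part (1): your case ``$C_i\cap\cs_\tau=\emptyset$'' (for $i\in\tau$) never occurs, because the monotonicity property of unique unions ($i\in\tau\subseteq[g]$ implies $\cs_{[g]}\cap C_i\subseteq\cs_{\tau}\cap C_i$) together with the hypothesis $e\in\cs_{[g]}\cap C_i$ forces $e\in\cs_\tau\cap C_i$ for every $\tau\ni i$. This is exactly why the paper's argument is two lines ($a_i=0<1\le|\cs_\tau\cap C_i|$). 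Your attempted handling of that (vacuous) case also asserts set identities that are not valid in general---an element of $C_j\cap C_i$ appearing in exactly one $C_k$ with $k\in\tau\setminus\{i\}$ lies in $\cs_{\tau\setminus\{i\}}$ but not in $\cs_\tau$---but since the case is empty this does no harm.

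The genuine gap is in your loop case. Corollary~\ref{cor:loops} gives $\cs_{[g]}\cap C_i=\{e\}$, but this does \emph{not} force $C_i=\{e\}$: $C_i$ is a cycle (a union of circuits) and may properly contain the circuit $\{e\}$. Concretely, let $M$ have ground set $\{e,f,g\}$ with $e$ a loop and $f,g$ parallel; then $C_1=\{f,g\}$, $C_2=\{e,f,g\}$ is a cycle system with $\cs_{[2]}=\{e\}\subseteq C_2$ and $C_2\neq\{e\}$. Consequently your claim ``$|C_i\cap\cs_\sigma|\le 1$ always'' fails (here $|C_2\cap\cs_{\{2\}}|=3$), and the singleton condition $\sigma=\{i\}$ only yields $a_i<|C_i|$, which does not force $a_i=0$ (the vector $(0,1)$ passes your test but is not coparking; it is killed by $\sigma=\{1,2\}$). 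To repair the step, apply Proposition~\ref{prop:every circuit} to the circuit $\{e\}$: there exists $\sigma$ with $\cs_\sigma=\{e\}$; since $e\in\cs_{[g]}$ lies in $C_i$ only, necessarily $i\in\sigma$, and $|C_j\cap\cs_\sigma|=0$ for all $j\in\sigma\setminus\{i\}$, so the coparking condition on this $\sigma$ forces the witness to be $i$ and yields $a_i<|C_i\cap\{e\}|=1$, i.e., $a_i=0$.
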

\begin{proof}
  Without loss of generality, let~$i=g$.
  \smallskip

  \noindent(\ref{item:del-con coparking 1}) Let~$a'=(a_{1},\ldots,a_{g-1})\in
  \N^{g-1}$ and~$a=(a_{1},\ldots,a_{g-1},0)$.  We must show~$a'\in
  P^{*}(\cs')$ if and only if~$a\in P^{*}(\cs)$.  Let~$\emptyset\neq\sigma\subseteq[g]$.
  If~$g\in\sigma$, then $a_{g}=0<1\leq|\cs_{\sigma}\cap C_{g}|$, and
  if~$g\not\in\sigma$, then
  $\cs'_{\sigma}=\cs_{\sigma}$.  The result follows.
  If $e$ is a loop, then Corollary~\ref{cor:loops} implies $c_{g}=0$ for
  every coparking function~$c\in P^{*}(\cs)$.  So in this case, our mapping is
  bijective.
  \smallskip

  \noindent(\ref{item:del-con coparking 2}) Let~$a'=(a_{1},\ldots,a_{g})\in
  \N^{g}$ and~$a=(a_{1},\ldots,a_{g-1},a_{g}+1)$. We must show~$a'\in
  P^{*}(\cs'')$ if and only if~$a\in P^{*}(\cs)$.  We
  have~$\cs''=\{C_{1}'',\ldots,C_{g}''\}$ where
  \[
    C_{j}''=
    \begin{cases}
     \hfil C_{j}&\text{if~$j\neq g$,}\\
      C_{j}\setminus\{e\}&\text{if $j=g$.}
    \end{cases}
  \]
  Therefore, for all nonempty~$\sigma\subseteq[g]$,
  \[
    \cs_{\sigma}=
    \begin{cases}
      \hfil \cs''_{\sigma}&\text{if~$g\not\in\sigma$,}\\
      \cs''_{\sigma}\cup\{e\}&\text{if $g\in\sigma$,}
    \end{cases}
  \]
  from which it follows that
  \[
    |\cs_{\sigma}\cap C_{j}|=
    \begin{cases}
      \hfil |\cs''_{\sigma}\cap C''_{j}|&\text{if~~$j\neq g$ or
      $g\not\in\sigma$,}\\
      |\cs''_{\sigma}\cap C''_{j}|+1&\text{if $j=g$ and $g\in\sigma$.}
    \end{cases}
  \]
  The result follows.
\end{proof}

\subsection{Main result} 
We have now seen that cycle systems behave well with respect to deletion and contraction, with similar properties carrying over to the corresponding coparking functions. From these observations we obtain our main result.

\begin{thm}\label{thm:main}
  Let~$\cs=\{C_{1},\ldots,C_{g}\}$ be a cycle system for a matroid~$M$,
  and let~$P^{*}=P^{*}(\cs)$ be the corresponding set of coparking functions.  Then the
  degree vector of~$P^{*}$ is the~$h$-vector for~$M$, i.e., $\deg(P^{*})=h(M)$.
\end{thm}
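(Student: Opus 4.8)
Write $H_{M}(t):=\sum_{k\ge0}h_{k}t^{k}$ for the generating polynomial of the $h$-vector of $M$; recall from the excerpt that $T_{M}(x,1)=\sum_{k=0}^{d}h_{k}x^{d-k}$ with $d=\rk(M)$, equivalently $H_{M}(t)=t^{\rk(M)}T_{M}(1/t,1)$. Set $\Phi_{\cs}(t):=\sum_{a\in P^{*}(\cs)}t^{\deg(a)}=\sum_{i}d_{i}t^{i}$, where $d_{i}=|\{a\in P^{*}(\cs):\deg(a)=i\}|$. The plan is to prove the polynomial identity $\Phi_{\cs}=H_{M}$; comparing coefficients of $t^{i}$ then gives $d_{i}=h_{i}$ for all $i$, i.e.\ $\deg(P^{*})=h(M)$. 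The two vectors have the same length because the leading coefficient of $\Phi_{\cs}$ is the positive number of maximal coparking functions, so $h_{k}=0$ beyond that degree, in agreement with Proposition~\ref{prop:pure}.

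\textbf{Induction.}
I would prove $\Phi_{\cs}=H_{M}$ by induction on $|E(M)|$. When $g=g(M)=0$, the matroid $M$ consists entirely of bridges, $\cs=\emptyset$, and $P^{*}(\cs)=\{()\}$, so $\Phi_{\cs}=1$; on the other side $T_{M}(x,1)=x^{\rk(M)}$, hence $H_{M}=1$ as well. Now suppose $g\ge1$. Then $\cs_{[g]}$ is dependent, hence nonempty, so choose $e\in\cs_{[g]}\cap C_{i}$; because $e$ lies in the cycle $C_{i}$ it is a non-bridge. If $e$ is a loop, then $\Ind(M)=\Ind(M\setminus e)$, so $f(M)=f(M\setminus e)$ and thus $H_{M}=H_{M\setminus e}$; meanwhile Proposition~\ref{prop:del-con coparking}(1) provides a degree-preserving bijection $P^{*}(\cs')\to P^{*}(\cs)$ for the cycle system $\cs'=\cs\setminus\{C_{i}\}$ on $M\setminus e$ (Proposition~\ref{prop:del-con cs}), and the induction hypothesis closes this case. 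If $e$ is a non-loop, then $e$ is neither a bridge nor a loop, so the Tutte recursion gives $T_{M}=T_{M\setminus e}+T_{M/e}$; combining this with $\rk(M\setminus e)=\rk(M)$ and $\rk(M/e)=\rk(M)-1$ and multiplying through by the appropriate power of $t$ yields
\[
  H_{M}(t)=H_{M\setminus e}(t)+t\,H_{M/e}(t).
\]
On the coparking side, Proposition~\ref{prop:del-con cs} supplies cycle systems $\cs'$ for $M\setminus e$ and $\cs''$ for $M/e$, and Proposition~\ref{prop:del-con coparking} identifies $P^{*}(\cs)$ as the disjoint union of the degree-preserving image of $P^{*}(\cs')$, namely $\{a:a_{i}=0\}$, and the image of $P^{*}(\cs'')$ under the injection raising coordinate $i$ by one, namely $\{a:a_{i}>0\}$; hence $\Phi_{\cs}=\Phi_{\cs'}+t\,\Phi_{\cs''}$. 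Since $M\setminus e$ and $M/e$ have strictly fewer ground-set elements, the induction hypothesis gives $\Phi_{\cs'}=H_{M\setminus e}$ and $\Phi_{\cs''}=H_{M/e}$, and the two displayed recursions coincide.

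\textbf{Main difficulty.}
I do not expect a deep obstacle: all of the structural content — that $M\setminus e$ and $M/e$ still carry cycle systems, and that the coparking functions of $M$ split into those vanishing and those positive in coordinate $i$ with the stated degree shifts — is exactly Propositions~\ref{prop:del-con cs} and~\ref{prop:del-con coparking}, which are already in hand. The one point requiring care is the bookkeeping that makes the Tutte recursion $T_{M}=T_{M\setminus e}+T_{M/e}$ line up with $\Phi_{\cs}=\Phi_{\cs'}+t\,\Phi_{\cs''}$: this rests on the elementary rank identities $\rk(M\setminus e)=\rk(M)$ and $\rk(M/e)=\rk(M)-1$ for a non-bridge non-loop $e$, which are precisely what produces the extra factor of $t$ on the contraction term on both sides, together with checking that the injections of Proposition~\ref{prop:del-con coparking} shift degree by $0$ and by $1$ respectively (immediate from the explicit formulas $a\mapsto(\dots,0,\dots)$ and $a\mapsto(\dots,a_{i}+1,\dots)$).
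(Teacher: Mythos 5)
Your proposal is correct and follows essentially the same route as the paper: a deletion/contraction induction anchored on an element $e\in\cs_{[g]}\cap C_i$, using Propositions~\ref{prop:del-con cs} and~\ref{prop:del-con coparking} to split $P^*(\cs)$ into the $a_i=0$ and $a_i>0$ parts and matching this against the Tutte recursion, with the rank bookkeeping $\rk(M\setminus e)=\rk(M)$, $\rk(M/e)=\rk(M)-1$ producing the degree shift on the contraction term. The only cosmetic difference is that you phrase the recursion via generating polynomials ($H_M=H_{M\setminus e}+tH_{M/e}$) where the paper writes it as $h(M)=h(M\setminus e)+h(M/e)^{+}$ with a prepended zero, and you induct on $|E(M)|$ rather than on the number of non-bridges; both are equivalent.
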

\begin{proof} We proceed by induction on the number of non-bridges of~$M$.
  Suppose $M$ consists solely of bridges. Then~$\cs=\emptyset$
  and~$g=0$.  We have $\N^{g}=\N^{0}=\{()\}$ with~$\deg(())=0$.  The Tutte
  polynomial of~$M$ is $T_{M}(x,y)=x^{\rk(M)}$.
  Hence,~$\deg(P^{*})=h(M)=(1)$.

  Now suppose~$M$ contains a non-bridge. Then $g>0$ and we can take $e\in
  \cs_{[g]}\cap C_{i}$ for some~$i$.
  Let~$\cs'=\cs\setminus C_{i}$ be the cycle system
  on~$M\setminus e$ from Proposition~\ref{prop:del-con coparking}. We first consider
  the case where~$e$ is a loop. In this case~$T_{M}(x,y)=yT_{M\setminus
  e}(x,y)$, and hence, $T_{M}(x,1)=T_{M\setminus e}(x,1)$.  So~$M$ and~$M
  \setminus e$ have the same~$h$-vectors.  On the other hand, by
  Proposition~\ref{prop:del-con coparking}, there is a degree-preserving
  bijection~$P^{*}(\cs')\to P^{*}(\cs)$. So in this case, our
  result follows by induction.

  We now consider the case where~$e$ is neither a loop nor a bridge.
  Let~$\cs'$ and $\cs''$ be the cycle systems from
  Proposition~\ref{prop:del-con coparking} on $M\setminus e$ and $M/e$,
  respectively.  We have~$T_{M}(x,y)=T_{M\setminus e}(x,y)+T_{M/e}(x,y)$, and
  hence,~$h(M)=h(M\setminus e)+h(M/e)$. Then,
  by Proposition~\ref{prop:del-con coparking} and induction,
  \[
    \deg(P^{*}(\cs)) 
    = \deg(P^{*}(\cs'))+ \deg(P^{*}(\cs''))
    =h(M\setminus e)+h(M/e)
    =h(M),
  \]
  as required.

  We now consider the case where~$e$ is neither a loop nor a bridge.
  If $a=(a_0,\ldots,a_k)$ is an integer vector, let $a^{+}$ denote the vector obtained from $a$ by prepending a zero: $a^{+}:=(0,a_0,\ldots,a_k)$.
  We have~$T_{M}(x,y)=T_{M\setminus e}(x,y)+T_{M/e}(x,y)$, and since $\rk(M)=\rk(M\setminus e)=\rk(M/e)+1$, it follows that
  $h(M)=h(M\setminus e)+h(M/e)^{+}$.
  Let~$\cs'$ and $\cs''$ be the cycle systems from
  Proposition~\ref{prop:del-con coparking} on $M\setminus e$ and $M/e$,
  respectively.   Then,
  by Proposition~\ref{prop:del-con coparking} and induction,
  \[
    \deg(P^{*}(\cs)) 
    = \deg(P^{*}(\cs'))+ \deg(P^{*}(\cs''))^{+}
    =h(M\setminus e)+h(M/e)^{+}
    =h(M),
  \]
  as required.  The vector $\deg(P^{*}(\cs''))^{+}$ appears since the injection $P^*(\cs'')\hookrightarrow P^*(\cs)$ shifts degrees by one.
\end{proof}

\begin{cor}\label{cor:Stanley}
    Conjecture \ref{conj:Stanley} holds for any matroid that admits a cycle system.
\end{cor}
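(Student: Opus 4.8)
The plan is to read the corollary off directly from Theorem~\ref{thm:main} and Proposition~\ref{prop:pure}, since together these say precisely that $h(M)$ is the $f$-vector of a pure multicomplex. First I would invoke the dictionary established in Section~\ref{sec: prelim}: fixing the ordering $\cs=(C_1,\dots,C_g)$, a multicomplex $\Delta$ on the ground set $\cs$ is the same data as an order ideal $\widetilde{\Delta}\subseteq\N^g$ (via multiplicity vectors), the degree vector of $\widetilde{\Delta}$ equals the $f$-vector of $\Delta$, and $\Delta$ is pure exactly when all maximal elements of $\widetilde{\Delta}$ share a common degree. By definition, a sequence of positive integers is a pure O-sequence precisely when it is the $f$-vector of some pure multicomplex.

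Next I would check that $P^*(\cs)$ fits this framework. It is a subset of $\N^g$ that is downward closed under the componentwise order $\le$: if $a\le b$ and $b$ is a coparking function, then for every nonempty $\sigma\subseteq[g]$ the index $i\in\sigma$ witnessing $b_i<|C_i\cap\cs_\sigma|$ also witnesses $a_i<|C_i\cap\cs_\sigma|$, so $a$ is a coparking function as well. (This is the opening observation in the proof of Proposition~\ref{prop:pure}.) Hence $P^*(\cs)$ is an order ideal in $\N^g$ and corresponds to a multicomplex $\Delta$ on the ground set $\cs$.

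Finally I would combine the two main results. Proposition~\ref{prop:pure} states that every maximal element of $P^*(\cs)$ has degree $\rk(M)-\beta$ (with $\beta$ the number of bridges of $M$), so $\Delta$ is pure; and Theorem~\ref{thm:main} gives $\deg(P^*(\cs))=h(M)$, so the $f$-vector of this pure multicomplex is exactly $h(M)$. Therefore $h(M)$ is a pure O-sequence, which is the content of Conjecture~\ref{conj:Stanley} for $M$. There is no substantive obstacle at this stage: the entire difficulty of the result is already absorbed into Theorem~\ref{thm:main} and Proposition~\ref{prop:pure}, and the corollary amounts to matching the terminology of multicomplexes and O-sequences from the Preliminaries with the poset $P^*(\cs)$.
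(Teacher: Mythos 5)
Your argument is correct and is exactly the paper's intended derivation: the corollary is stated without further proof precisely because it follows by combining Proposition~\ref{prop:pure} (purity of $P^*(\cs)$) with Theorem~\ref{thm:main} ($\deg(P^*)=h(M)$) under the multicomplex/order-ideal dictionary from the Preliminaries. Your added verification that $P^*(\cs)$ is an order ideal matches the opening observation in the proof of Proposition~\ref{prop:pure}, so there is nothing to add.
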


\section{Bijections between bases and coparking functions}\label{sec:bijections}
A \emph{rooted tree} is a tree~$T$ with a distinguished vertex called the
\emph{root}. The \emph{level} of a vertex of~$T$ is the number of edges in the
unique path from that vertex to the root.  If $v,w$ are adjacent vertices of~$T$
and the level of~$w$ is one greater than that of $v$, then $w$ is the
\emph{child} of~$v$ and $v$ is the \emph{parent} of~$w$.  The rooted tree~$T$ is
a \emph{full binary tree} if each vertex has either no children or exactly two
children, one of which is designated as the \emph{left child} and the other as
the \emph{right child}.  The edge from a vertex to a left (resp., right) child
is called a \emph{left} (resp.,~\emph{right}) \emph{edge}.  A full binary tree
can be described recursively as either (i) a singleton set (the root), or (ii) a
tuple~$(L,r,R)$ where $r$ is a singleton set (the \emph{root}) and both $L$ and
$R$ are full binary trees.

\begin{defn} Let $M$ be a matroid. A \emph{deletion/contraction-tree} or
  \emph{DC-tree} for a
  matroid~$M$, denoted $T(M)$, is a full binary tree constructed
  recursively as follows: (i)~$T(M)=M$ if $M$ consists solely of loops and
  bridges, and otherwise, (ii) $T(M)=(T(M\setminus e),M,T(M/e))$ for some $e\in
  E(M)$ that is neither a bridge nor a loop.  
\end{defn}

\subsection{DC-tree determined by a cycle system and ground set
ordering}  Let~$M$ be a matroid with cycle system $\cs =
\{C_{1},\ldots,C_g\}$.  Then a linear ordering of the ground set~$\xi$ of~$M$
determines a DC-tree with labeled right edges, denoted~$T(\cs,\xi)$,
which we now describe via a recursive algorithm: 

\begin{enumerate}
  \item\label{step1} If $M$ consists solely of loops and bridges, then
    $T(\cs,\xi)=M$, and we are done. Otherwise, proceed to the next
    step.
  \item Set $\sigma=[g]$.
  \item\label{step3} Let~$e$ be the maximal element of~$\cs_\sigma$, and
    let $i$ be the unique index such that~$e\in C_{i}$. 
  \item\label{step4} If~$e$ is a loop, replace~$\sigma$ with $\sigma\setminus\{i\}$ and repeat Step~(\ref{step3}).
  \item\label{step5} Let~$\cs'$ and $\cs''$ be the cycle systems on $M\setminus
  e$ and~$M/e$ defined in Proposition~\ref{prop:del-con cs}.  Remove~$e$
  from~$\xi$ but otherwise retain the linear order.  Recursively construct~$T(\cs',\xi)$
  and~$T(\cs'',\xi)$, and assign their roots as the left and right children, respectively, of $M$ to form the full binary tree $(T(\cs',\xi),M,T(\cs'',\xi))$.
\item Label the edge from $M$ to the root of $T(\cs'',\xi)$ by the
  pair~$(e,C_i)$.
\end{enumerate}

\begin{remark}
  In Step~(\ref{step4}) the set~$\sigma$ never becomes empty. To see this,
  suppose~$\sigma$ did become empty. Then we would
  have found $g$ loops in~$M$.  Since $\rk(M)=|E(M)|-g$, the elements of~$M$
  besides these~$g$ loops form a basis for~$M$, and this would be the only basis
  for~$M$.  Thus,~$M$ consists solely of loops and bridges, and
  our algorithm would have stopped at Step~(\ref{step1}).  Also, note that the
  recursion is finite: the size of $\xi$ decreases by one each time
  Step~(\ref{step5}) is reached in the recursion, and when $|\xi|=1$ the matroid
  in question consists of a single loop or a single bridge.
\end{remark}

\subsection{Basis-to-coparking bijections}\label{subsect:basis-to-cpkg}
Our goal now is to show that the choice of a cycle system and an ordering of the
ground set determines a bijection between the set of bases of the matroid
and the set of coparking functions with respect to that cycle system. 
The reader may find it helpful to consult Figure~\ref{fig:dc-diagram} while reading this section.
\begin{figure}[htpb]
  \centering
  \begin{tikzpicture}[scale=0.8]

    \begin{scope}[local bounding box=root]
      \node[ball] (v1) at (0.75,1.3) {}; 
      \node[ball] (v2) at (0,0) {};
      \node[ball] (v3) at (1.5,0) {}; 

      \draw (v1) -- (v2);  
      \draw (v1) -- (v3);  
      \draw (v2) -- (v3) node[midway, inner sep=2pt, fill=white] {$c$};  
      \node at (0.72,-1.2) (cp) {};  
      \draw (v2) .. controls (cp) .. (v3) node[midway, below] {$d$};

      \node[fill=white, inner sep=1pt] at ($(v1)!0.5!(v2)!0.2cm!90:(v1)$) {$a$}; 
      \node[fill=white, inner sep=1pt] at ($(v1)!0.5!(v3)!0.2cm!270:(v1)$) {$b$}; 

      \node at (barycentric cs:v1=1,v2=1,v3=1) {$\mathbbm{1}$}; 
      \node at (barycentric cs:v2=1,cp=1,v3=1) {$\mathbbm{2}$}; 
    \end{scope}
    \node[fit=(root), inner sep=0.2cm, line width=1pt] (padded_root) {}; 

    \begin{scope}[local bounding box=l, shift={($(root.west)+(-5cm,-3.5cm)$)}]
      \node[ball] (v1) at (0.75,1.3) {}; 
      \node[ball] (v2) at (0,0) {};
      \node[ball] (v3) at (1.5,0) {}; 

      \draw (v1) -- (v2);  
      \draw (v1) -- (v3);  
      \draw (v2) -- (v3) node[midway, below, inner sep=2pt, fill=white] {$c$};  

      \node[fill=white, inner sep=1pt] at ($(v1)!0.5!(v2)!0.3cm!90:(v1)$) {$a$}; 
      \node[fill=white, inner sep=1pt] at ($(v1)!0.5!(v3)!0.3cm!270:(v1)$) {$b$}; 

      \node at (barycentric cs:v1=1,v2=1,v3=1) {$\mathbbm{1}$}; 
    \end{scope}

    \begin{scope}[local bounding box=r, shift={($(root.east)+(5cm,-3cm)$)}]
      \node[ball] (v1) at (0,1.3) {}; 
      \node[ball] (v2) at (0,0) {};

      \draw[bend right = 60] (v1) to node[midway, left, inner sep=2pt, fill=white] {$a$} (v2);
      \draw[bend left = 60] (v1) to node[midway, right, inner sep=2pt, fill=white] {$b$} (v2);
      \draw (v2) to[loop below,out=270-40,in=270+40,looseness=1.5,min distance=1.7cm]
      node[pos=0.5, below, inner sep=2pt, fill=white] {$c$}
      coordinate[pos=0.5] (loop_bottom) (v2);

      \node at ($(v1)!0.5!(v2)$) {$\mathbbm{1}$};
      \node at ($(loop_bottom) + (0,0.45cm)$) {$\mathbbm{2}$};
    \end{scope}

    \begin{scope}[local bounding box=ll, shift={($(l.center)+(-2.3cm,-4.5cm)$)}]
      \node[ball] (v1) at (0.75,1.3) {}; 
      \node[ball] (v2) at (0,0) {};
      \node[ball] (v3) at (1.5,0) {}; 

      \draw (v1) -- (v2);  
      \draw (v1) -- (v3);  

      \node[fill=white, inner sep=1pt] at ($(v1)!0.5!(v2)!0.3cm!90:(v1)$) {$a$}; 
      \node[fill=white, inner sep=1pt] at ($(v1)!0.5!(v3)!0.3cm!270:(v1)$) {$b$}; 
    \end{scope}

    \begin{scope}[local bounding box=lr, shift={($(l.center)+(2.3cm,-4.5cm)$)}]
      \node[ball] (v1) at (0,1.3) {}; 
      \node[ball] (v2) at (0,0) {};

      \draw[bend right = 60] (v1) to node[midway, left, inner sep=2pt, fill=white] {$a$} (v2);
      \draw[bend left = 60] (v1) to node[midway, right, inner sep=2pt, fill=white] {$b$} (v2);

      \node at ($(v1)!0.5!(v2)$) {$\mathbbm{1}$};
    \end{scope}

    \begin{scope}[local bounding box=rl, shift={($(r.center)+(-2.5cm,-3.5cm)$)}]
      \node[ball] (v1) at (0,1) {}; 
      \node[ball] (v2) at (0,0) {};

      \draw (v1) to node[midway, left, inner sep=2pt, fill=white] {$a$} (v2);
      \draw (v2) to[loop below,out=270-40,in=270+40,looseness=1.5,min distance=1.7cm]
      node[pos=0.5, below, inner sep=2pt, fill=white] {$c$}
      coordinate[pos=0.5] (loop_bottom) (v2);

      \node at ($(loop_bottom) + (0,0.45cm)$) {$\mathbbm{2}$};
    \end{scope}

    \begin{scope}[local bounding box=rr, shift={($(r.center)+(2.5cm,-3.5cm)$)}]
      \node[ball] (v1) at (0,0) {};

      \draw (v1) to[loop above,out=90-40,in=90+40,looseness=1.5,min distance=1.7cm]
      node[pos=0.5, above, inner sep=2pt, fill=white] {$a$}
      coordinate[pos=0.5] (loop_top) (v1);
      \draw (v1) to[loop below,out=270-40,in=270+40,looseness=1.5,min distance=1.7cm]
      node[pos=0.5, below, inner sep=2pt, fill=white] {$c$}
      coordinate[pos=0.5] (loop_bottom) (v1);

      \node at ($(loop_top) - (0,0.45cm)$) {$\mathbbm{1}$};
      \node at ($(loop_bottom) + (0,0.45cm)$) {$\mathbbm{2}$};
    \end{scope}

    \begin{scope}[local bounding box=lrl, shift={($(lr.center)+(-1.5cm,-4cm)$)}]
      \node[ball] (v1) at (0,1) {}; 
      \node[ball] (v2) at (0,0) {};

      \draw (v1) to node[midway, left, inner sep=2pt, fill=white] {$a$} (v2);

    \end{scope}

    \begin{scope}[local bounding box=lrr, shift={($(lr.center)+(1.5cm,-4cm)$)}]
      \node[ball] (v1) at (0,0) {};

      \draw (v1) to[loop above,out=90-40,in=90+40,looseness=1.5,min distance=1.7cm]
      node[pos=0.5, above, inner sep=2pt, fill=white] {$a$}
      coordinate[pos=0.5] (loop_top) (v1);

      \node at ($(loop_top) - (0,0.45cm)$) {$\mathbbm{1}$};
    \end{scope}

    \draw[shorten <=1cm, shorten >=1cm] (root.center)--(l.center);
    \node[fill=white, inner sep=1pt] at ($(root.center)!0.5!(r.center)!0.6cm!90:(r.center)$) {$(d,\mathbbm{2})$}; 
    \draw[shorten <=1cm, shorten >=1cm] (root.center)--(r.center);
    \draw[shorten <=1cm, shorten >=1cm] (l.center)--(ll.center);
    \node[fill=white, inner sep=1pt] at ($(l.center)!0.5!(lr.center)!0.65cm!90:(lr.center)$) {$(c,\mathbbm{1})$}; 
    \draw[shorten <=1cm, shorten >=1cm] (l.center)--(lr.center);
    \draw[shorten <=1cm, shorten >=1cm] (r.center)--(rl.center);
    \node[fill=white, inner sep=1pt] at ($(r.center)!0.5!(rr.center)!0.65cm!90:(rr.center)$) {$(b,\mathbbm{1})$}; 
    \draw[shorten <=1cm, shorten >=1cm] (r.center)--(rr.center);
    \draw[shorten <=0.8cm, shorten >=0.8cm] (lr.center)--(lrr.center);
    \node[fill=white, inner sep=1pt] at ($(lr.center)!0.5!(lrr.center)!0.68cm!90:(lrr.center)$) {$(b,\mathbbm{1})$}; 
    \draw[shorten <=0.8cm, shorten >=0.8cm] (lr.center)--(lrl.center);

    \node[below=0.2cm of ll.south, draw, align=left] {$(0,0)$\\$\{a,b\}$}; 
    \node[below=0.2cm of lrl.south, draw,align=left] {$(1,0)$\\$\{a,c\}$}; 
    \node[below=0.2cm of lrr.south, draw,align=left] {$(2,0)$\\$\{b,c\}$}; 
    \node[below=0.2cm of rl.south, draw,align=left] {$(0,1)$\\$\{a,d\}$}; 
    \node[below=0.2cm of rr.south, draw,align=left] {$(1,1)$\\$\{b,d\}$}; 
  \end{tikzpicture}
  \caption{The DC-tree for the cycle matroid of a graph~$G$ with cycle system
  $\cs=\{\mathbbm{1},\mathbbm{2}\}$ where $\mathbbm{1}=\{a,b,c\}$  and
$\mathbbm{2}=\{c,d\}$ and with ground set ordering~$a<b<c<d$.  Each leaf of the
tree is labeled with its corresponding coparking function for~$\cs$ and
spanning tree of~$G$, as described in Section~\ref{subsect:basis-to-cpkg}. The first and second components of the coparking function count the number of times $\mathbbm{1}$ and~$\mathbbm{2}$, respectively, appear as labels on the path from the leaf to the root. }
  \label{fig:dc-diagram}
\end{figure}
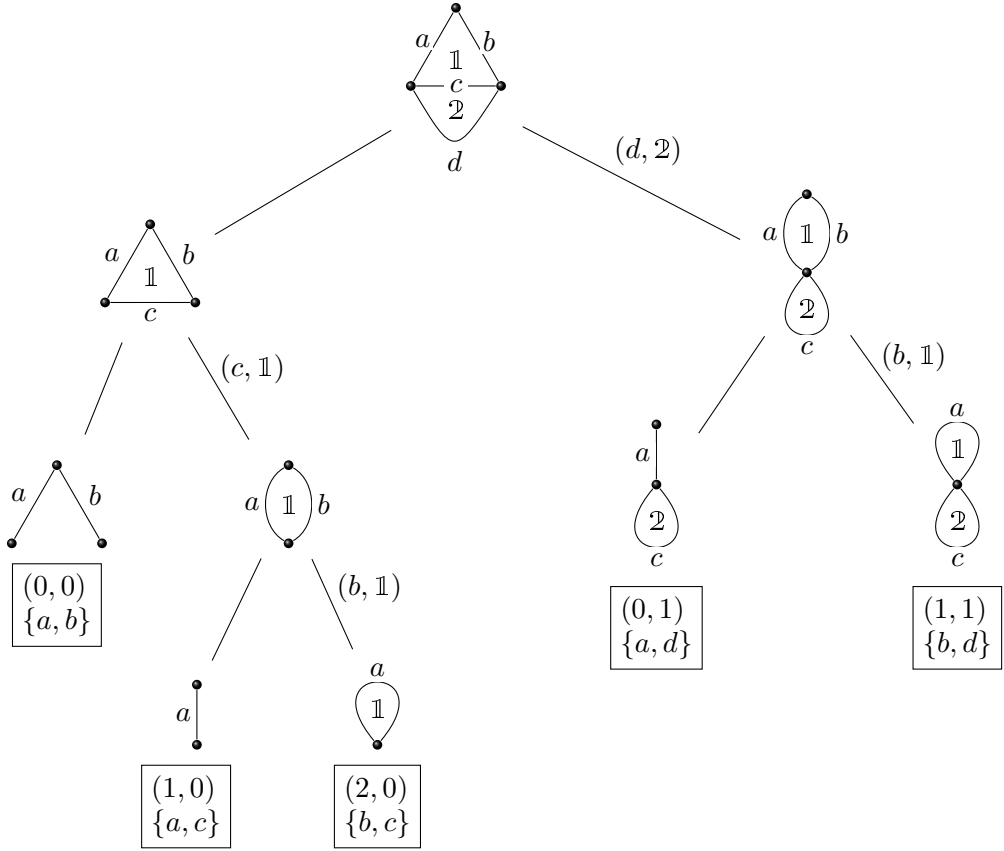
Let~$M$ be a matroid with non-loop, non-bridge element~$e$.  The bases
of~$M\setminus e$ are exactly the bases of~$M$ that do not contain~$e$, and the
bases of~$M/e$ are in bijection with the bases of~$M$ that contain~$e$ via
$B\mapsto B\cup e$ for each basis~$B$ of~$M/e$. Similarly, if~$\cs =
\{C_{1},\ldots,C_g\}$ is a cycle system for $M$ and $e\in \cs_{[g]}$, then
Proposition~\ref{prop:del-con cs} shows that~$M\setminus e$ and $M/e$ inherit
cycle systems, and Proposition~\ref{prop:del-con coparking} says the coparking
functions for those cycle systems correspond with the coparking
functions for~$\cs$.  Apply these correspondences recursively to a
DC-tree~$T(\cs,\xi)$, constructed as above. Each leaf
of~$T(\cs,\xi)$ is a minor~$N$ of~$M$ consisting solely of loops and
bridges.  The minor $N$ has a unique basis, which corresponds to a basis~$B$
of~$M$. The basis~$B$ consists of the bridge elements of~$N$ and all of the
edges that were contracted to obtain~$N$ from~$M$ in~$T(\cs,\xi)$.
These contracted edges will appear as part of the labels for the right
(contraction) edges along the unique path from~$N$ to~$M$
in~$T(\cs,\xi)$.   Similarly, by Proposition~\ref{prop:pure},~$N$ has a
unique coparking function (all of whose components are~$0$), and this coparking
function corresponds to a coparking function~$a$ on~$M$ with respect
to~$\cs$.  The~$i$-th component of~$a$ is the number of times a label of
the form $(e,C_i)$ for some~$e$ appears as a right (contraction) edge along the
unique path from~$N$ to~$M$ in $T(\cs,\xi)$.  In this way, we may label
each leaf~$N$ with a basis~$B(N)$ of~$M$ and a coparking function~$a(N)$
for~$\cs$.  Each basis and coparking function will appear exactly once
among these labels, and together, they account for all the bases and coparking
functions for the pair~$(M,\cs)$.  The mapping~$B(N)\mapsto a(N)$ as $N$
varies among the leaves of~$T(\cs,\xi)$ is thus a bijection between
bases and coparking functions for~$(M,\cs)$. We formalize this bijection
in Algorithms~\ref{alg:basis-to-coparking} and~\ref{alg:coparking-to-basis} below.

\RestyleAlgo{ruled}
\begin{algorithm}\label{alg:basis-to-coparking}
\caption{Basis-to-coparking}
\DataSty{input: cycle system $\cs=\{C_1,\dots,C_g\}$ on a matroid~$M$; a
linear ordering of the ground set of~$M$; and a basis~$B$ of~$M$}\;
\DataSty{output: coparking function~$a\in \N^{g}$ with respect to $\cs$}\;
$\sigma\gets[g]$\;
$a\gets \vec{0}\in \N^{g}$\;
\While{$\sigma\neq \emptyset$}{
  $e\gets \max(\cs_{\sigma})$\;
  $i\gets j$ where $j\in\sigma$ and $e\in C_{j}$\;
  \If{$e\notin B$}
  {$\sigma\gets\sigma\setminus\{i\}$\;}
  \Else
  {
    $a_{i}\gets a_{i}+1$\;
    $C_{i}\gets C_{i}\setminus \{e\}$\;
  }
}
  \DataSty{output: $a$}
\end{algorithm}

\RestyleAlgo{ruled}
\begin{algorithm}\label{alg:coparking-to-basis}
\caption{Coparking-to-basis}
\DataSty{input: cycle system $\cs=\{C_1,\dots,C_g\}$ on a matroid~$M$; a
linear ordering of the ground set~$E$ of~$M$; and a coparking function $a\in
\N^{g}$}\;
\DataSty{output: basis for~$M$}\;
$\sigma\gets[g]$\;
$B\gets E$\;
\While{$\sigma\neq \emptyset$}{
  $e\gets \max(\cs_{\sigma})$\;
  $i\gets j$ where $j\in\sigma$ and $e\in C_{j}$\;
  $a_{i}\gets a_{i}-1$\;
  \If{$a_{i}<0$}
  {$\sigma\gets\sigma\setminus\{i\}$\;
  $B\gets B\setminus\{e\}$\;}
  \Else
  {
    $C_{i}\gets C_{i}\setminus \{e\}$\;
  }
}
  \DataSty{output: $B$}
\end{algorithm}

\FloatBarrier

\bigskip
\section{Open questions}\label{sec:questions}

We end with some open questions and possible directions for future research.  
\subsection{Existence of cycle systems}
In Section~\ref{subsection:new from old} we give methods for forming new matroids with cycle systems from old ones via a gluing process, and we use those results to show the $K_{3,3}$-free graphs have cycle systems.  Can we perhaps extend these results?  Note that the collection of matroids with cycle systems is not closed under taking minors, and hence, even for the collection of graphic matroids, it is not clear whether there is a characterizing condition involving excluded minors.

\begin{prob} 
Describe all matroids with cycle systems.
\end{prob}

In our search for circuit systems, we have found no examples of nonbinary matroids that admit such structures. In addition , we have seen in Proposition \ref{prop:circuitspace} that elements of a cycle system have connections to the underlying circuit space, which can be used to characterize binary matroids. Inspired by this, we propose the following.
\begin{conj}\label{conj: nonbinary}
All matroids with cycle systems are binary.
\end{conj}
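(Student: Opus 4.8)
\medskip
\noindent\textbf{Proof strategy.}

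The plan is to argue by contradiction, using the deletion/contraction machinery of Proposition~\ref{prop:del-con cs} together with Tutte's excluded-minor characterization: a matroid is binary if and only if it has no minor isomorphic to $U(2,4)$~\cite{Oxley}. Since binariness is preserved under direct sums, if some non-binary matroid admits a cycle system, then by Corollary~\ref{cor:circuit systems}(\ref{cor:cir1}) and Theorem~\ref{thm:circuits} one of its connected components is a connected non-binary matroid carrying a \emph{circuit} system. Among all such counterexamples, fix $M$ with $|E(M)|$ minimal. Being connected with $|E(M)|\ge 4$, $M$ has no loops and no bridges, so $\cs_{[g]}$ is the set of ground-set elements lying in exactly one $C_i$, and it is nonempty since it is dependent.

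The first step is to extract what minimality gives. By Proposition~\ref{prop:del-con cs}(\ref{item:del-con cs 2}), for every $e\in E(M)$ the contraction $M/e$ carries a cycle system of corank $g$; were $M/e$ non-binary, a component of it would be a connected non-binary matroid with a circuit system and fewer elements, contradicting minimality. Hence $M/e$ is binary for all $e$, so any $U(2,4)$-minor of the non-binary matroid $M$ must be obtained using no contractions, i.e.\ $M$ has a $U(2,4)$-\emph{restriction} $M|S$ on some four-element set $S$. If some $b\notin S$ lies outside $\mathrm{cl}_M(S)$, a direct rank computation shows $(M/b)|S\cong U(2,4)$, contradicting that $M/b$ is binary; hence $E(M)\subseteq\mathrm{cl}_M(S)$ and $\rk(M)=\rk(S)=2$. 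So $M$ is a rank-$2$ connected non-binary matroid --- a ``line'' with $k\ge 4$ parallel classes --- carrying a circuit system.

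Minimality now strikes again. If $e$ lies in a parallel class of size $\ge 2$, then $M\setminus e$ is a connected non-binary rank-$2$ matroid with fewer elements, hence by minimality has no circuit system and thus no cycle system by Corollary~\ref{cor:circuit systems}(\ref{cor:cir2}); but if such an $e$ belonged to $\cs_{[g]}$, then Proposition~\ref{prop:del-con cs}(\ref{item:del-con cs 1}) would produce a cycle system for $M\setminus e$. Thus every element of a nontrivial parallel class lies outside $\cs_{[g]}$ (i.e.\ is contained in at least two members of $\cs$). Running the same argument on a simple element shows that if $k\ge 5$ then \emph{every} element lies outside $\cs_{[g]}$, forcing $\cs_{[g]}=\emptyset$, a contradiction; hence $k=4$. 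If all four classes were trivial, $M=U(2,4)$, which has no cycle system (Section~\ref{sec:examples}). So the conjecture reduces to the single assertion: \emph{a rank-$2$ matroid with exactly four parallel classes, at least one nontrivial, has no circuit system.}

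This reduced statement is where I expect the real obstacle, and presumably the reason the assertion is still only a conjecture: the deletion/contraction descent has stopped, since every single-element minor of such an $M$ has rank $\le 2$ with at most three parallel classes and is therefore binary, so a purely structural argument is needed. The natural approach is to invoke Proposition~\ref{prop:every circuit}: choosing a transversal $S=\{p_1,p_2,p_3,p_4\}$ of the four classes with $M|S\cong U(2,4)$, each of the four $3$-subsets $S\setminus\{p_j\}$ is a circuit and hence equals $\cs_{\sigma_j}$ for some $\sigma_j\subseteq[g]$, while any nontrivial parallel pair gives a $2$-circuit equal to some $\cs_{\tau}$; one then plays these off against one another through the unique-union inclusions (for instance $\cs_{\sigma\cup\sigma'}\subseteq\cs_\sigma\cup\cs_{\sigma'}$ and $\cs_\sigma\cap C_i\subseteq\cs_{\sigma'}\cap C_i$ for $\sigma'\subseteq\sigma$), extending the computation in Section~\ref{sec:examples} that $U(2,4)$ itself has no circuit system. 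In the instances I have checked by hand, the forced set $\cs_{[g]}$ collapses to an independent two-element subset meeting two distinct classes, giving the contradiction; making this uniform, with $g$ unbounded and the index sets $\sigma_j,\tau$ uncontrolled, is the crux. An alternative is to push Proposition~\ref{prop:circuitspace} further and argue that the $\mathbb{F}_2$-circuit-space relations among the $w_i$, together with the unique-union property, cannot accommodate the two ``extra'' dimensions of a non-binary rank-$2$ matroid; but I do not currently see how to close this, and doing so would be the main content of a complete proof.
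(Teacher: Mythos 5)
This statement is Conjecture~\ref{conj: nonbinary}, which the paper leaves \emph{open}: there is no proof in the paper to compare against, and your proposal is, by your own account, not a proof either. So the verdict has to be that a genuine gap remains. That said, your reduction is substantive and, as far as I can check, correct: the minimal-counterexample setup (connected, non-binary, carrying a circuit system by Corollary~\ref{cor:circuit systems} and Theorem~\ref{thm:circuits}), the use of Proposition~\ref{prop:del-con cs}(\ref{item:del-con cs 2}) to force every single-element contraction to be binary, the excluded-minor argument showing the $U(2,4)$-minor must be a restriction with $E(M)\subseteq\mathrm{cl}(S)$ so that $\rk(M)=2$, and the deletion argument via Proposition~\ref{prop:del-con cs}(\ref{item:del-con cs 1}) forcing $k=4$ --- all of these steps hold up. The gap is exactly where you say it is: a rank-$2$ connected matroid with four parallel classes, at least one nontrivial, and you give only heuristics for why it should fail to carry a circuit system.

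I believe, however, that this last case closes with the same minimality you have already deployed, and you should try to finish it rather than reaching for Proposition~\ref{prop:every circuit} or the circuit space. Your deletion argument shows that \emph{every} element of a nontrivial class lies outside $\cs_{[g]}$, so $\cs_{[g]}$ is contained in the union of the trivial classes; since $\cs_{[g]}$ is dependent and a set of at most one element per class is independent unless it contains a transversal triple, exactly three classes must be trivial, say $\{b\},\{c\},\{d\}$, and $\cs_{[g]}=\{b,c,d\}$. Thus each of $b,c,d$ lies in exactly one member of $\cs$, and by Theorem~\ref{thm:circuits} that member is a circuit of $M$. Every circuit of $M$ through $b$ is a transversal triple and hence contains $c$ or $d$ (and symmetrically). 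If the unique member through $b$ is $\{b,c,d\}$ itself, then $\cs_{[g]}\cap C_i$ contains a circuit and Lemma~\ref{lemma:decomp} splits $M$ as a direct sum, contradicting connectedness. Otherwise that member is $\{a_j,b,c\}$ (say), which is then also the unique member through $c$; but the unique member through $d$ must contain $b$ or $c$, hence equals $\{a_j,b,c\}$, which does not contain $d$ --- a contradiction. If this checks out, your strategy actually proves the conjecture, which would be a stronger outcome than the paper records; I would verify the argument carefully (in particular the connectedness and non-binarity of $M\setminus e$ in each deletion step) before claiming it.
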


\noindent
As discussed above, all known examples of matroids that admit cycle systems are, in fact, regular.

Generalizing the question of whether a circuit system exists for a given matroid, we can try to count the number of cycle systems.  For instance, the graph in Figure~\ref{fig: 7 no fundamental} has a unique cycle system.  The complete graph $K_n$ may be thought of as a cone over $K_{n-1}$ in $n$ ways (one for each choice of cone vertex), and thus has the cone cycle system described in Section~\ref{sec:examples}.

\begin{question}  For $n\geq 5$, is it true that $K_n$ has only these cycle systems, and thus has only one cycle system up to symmetry?
\end{question}

\begin{question}
If a matroid $M$ admits a cycle system, when it is unique (up to symmetries of $M$)?
\end{question}

\begin{conj}
Let $G_n$ be the connected graph with $n$ vertices obtained by gluing $n-2$ triangles along a single common edge.  We conjecture that the number of cycle systems for $G_n$ is $(n-1)^{(n-3)}$, and that among all simple graphs with $n$ vertices, $G_n$ has the maximal number of cycle systems. 
\end{conj}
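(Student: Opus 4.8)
The conjecture has two halves. The exact count $(n-1)^{n-3}$ I believe is provable with the machinery above, via a bijection between the cycle systems of $G_n$ and the spanning trees of $K_{n-1}$; Cayley's formula then gives $(n-1)^{(n-1)-2}=(n-1)^{n-3}$. The extremal claim is the genuine obstacle. For the count, write the common edge of $G_n$ as $e_0=uv$ and the remaining $n-2$ vertices as $w_1,\dots,w_{n-2}$, each joined to $u$ and $v$ by $a_i=uw_i$ and $b_i=vw_i$; thus $G_n=\textrm{Cone}(K_{1,n-2})$, it has $2n-3$ edges, and its corank is $g=n-2$. Since every vertex of $G_n$ has degree at least two and no single vertex disconnects it, $M(G_n)$ is connected, so by Theorem~\ref{thm:circuits} every cycle system is a circuit system and it suffices to count circuit systems. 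Each $w_i$ has degree two, so any cycle meeting $w_i$ must contain both $a_i,b_i$ and pass through both $u$ and $v$; a short case check shows the only circuits of $G_n$ are the triangles $T_i=\{e_0,a_i,b_i\}$ and the quadrilaterals $Q_{ij}=\{a_i,b_i,a_j,b_j\}$, a total of $\binom{n-1}{2}$. I would then define the map from circuits to edges of $K_{n-1}$ on vertex set $\{0,1,\dots,n-2\}$ by $T_i\mapsto\{0,i\}$ and $Q_{ij}\mapsto\{i,j\}$, and prove that a family of $n-2$ circuits is a circuit system if and only if its image is a spanning tree of $K_{n-1}$.

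For the ``only if'' direction I would use Proposition~\ref{prop:circuitspace}: $M(G_n)$ is binary, so the indicator vectors of a circuit system are linearly independent in the circuit space. Using $\{T_1,\dots,T_{n-2}\}$ as a basis identifies the circuit space with $\mathbb{F}_2^{n-2}$, sending $T_i\mapsto\varepsilon_i$ and $Q_{ij}\mapsto\varepsilon_i+\varepsilon_j$ (the $e_0$-coordinates cancel); composing the standard $\mathbb{F}_2$-representation of $M(K_{n-1})$, edge $\{x,y\}\mapsto f_x+f_y$, with the projection killing $f_0$ is an isomorphism onto $\mathbb{F}_2^{n-2}$ under which $\{0,i\}\mapsto\varepsilon_i$ and $\{i,j\}\mapsto\varepsilon_i+\varepsilon_j$. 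Hence linear independence of the circuit indicators is equivalent to the corresponding $n-2$ edges forming a forest on $n-1$ vertices, i.e.\ a spanning tree. For the ``if'' direction, let $\Gamma$ be a spanning tree of $K_{n-1}$ and $\emptyset\neq\sigma\subseteq E(\Gamma)$. From the incidences above, $e_0$ lies in exactly one $C_e$ ($e\in\sigma$) precisely when $\deg_{\Gamma_\sigma}(0)=1$, and both $a_i$ and $b_i$ lie in exactly one precisely when $\deg_{\Gamma_\sigma}(i)=1$, so $\ast\{C_e:e\in\sigma\}$ equals $\bigl(\{e_0\}\text{ if }0\text{ is a leaf of }\Gamma_\sigma\bigr)\cup\bigcup_{i\text{ a leaf}}\{a_i,b_i\}$. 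The subforest $\Gamma_\sigma$ has at least two leaves: if two of them, $p,q$, are nonzero, the unique union contains the circuit $Q_{pq}$; otherwise $\Gamma_\sigma$ is a path from $0$ to some $p\neq0$ and the unique union contains the circuit $T_p$. Either way it is dependent, so the family has the unique union property. This establishes the bijection and the count.

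\textbf{The extremal claim is the main obstacle.} As literally stated the maximality already fails at $n=4$: there $G_4=K_4\setminus e$ has $3$ cycle systems, but $K_4$ has $4$ (its four ``cone'' circuit systems, one per vertex), so one should read the statement for $n\ge5$. The natural strategy is a two-step reduction: first reduce to $2$-connected graphs by controlling the effect of biconnected blocks through Theorem~\ref{thm:components}---although this is subtler than a product formula, since a cut vertex permits ``decorated'' cycle systems---and then bound the number of cycle systems of a $2$-connected simple graph on $m$ vertices by $(m-1)^{m-3}$, equivalently by the number of spanning trees of $K_{m-1}$. The second step is where I expect the real difficulty: I see no injection into those spanning trees generalizing the one above, and the deletion/contraction recursion of Proposition~\ref{prop:del-con cs} is hard to iterate here because the distinguished element $e\in\cs_{[g]}\cap C_i$ depends on the chosen cycle system, not just on the graph. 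A genuinely new input---an extremal argument that edges added beyond $G_m$'s edge count strictly cost cycle systems, or a linear-algebraic bound on bases of the circuit space all of whose unique unions are dependent---seems to be required. I would regard the count as settled by the bijection above and the maximality statement as the main open point.
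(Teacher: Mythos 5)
This statement is one of the paper's open conjectures (Section~\ref{sec:questions}); the paper offers no proof of either half, so there is nothing to compare your argument against---what you have written is new content rather than a rederivation. That said, your treatment of the enumeration half is correct and essentially complete. The identification of the circuits of $G_n$ as the $n-2$ triangles $T_i$ and the $\binom{n-2}{2}$ quadrilaterals $Q_{ij}$ is right (each $w_i$ has degree two, forcing $a_i,b_i$ to travel together), the corank is $n-2$, and $M(G_n)$ is connected, so Theorem~\ref{thm:circuits} reduces the count to circuit systems. Your bijection with spanning trees of $K_{n-1}$ is sound in both directions: the ``only if'' direction correctly combines Proposition~\ref{prop:circuitspace} with the observation that $Q_{ij}=T_i+T_j$ over $\mathbb{F}_2$, matching the standard representation of $M(K_{n-1})$ after projecting away the coordinate of vertex $0$; and the ``if'' direction correctly translates multiplicities in the unique union into degrees in the subforest $\Gamma_\sigma$, with the two-leaf dichotomy producing either $Q_{pq}$ or $T_p$ inside $\cs_\sigma$. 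Cayley's formula then gives $(n-1)^{n-3}$. Your observation that the maximality claim fails at $n=4$ (where $K_4$ has at least its four cone circuit systems while $G_4=K_4\setminus e$ has only three) is also correct and is a genuinely useful remark: the conjecture as stated should be restricted to $n\ge 5$. You are right that the extremal half is the real open problem and that neither the deletion/contraction recursion of Proposition~\ref{prop:del-con cs} nor the linear-algebra bound obviously yields it; honestly flagging that rather than papering over it is the right call. In short: you have proved the counting assertion, sharpened the statement of the extremal assertion, and correctly left the latter open.
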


\begin{prob}
Are there classes of matroids for which the number of cycle systems is combinatorially interesting?  
\end{prob}

\subsection{Generalizing cycle systems.}  
Consider the recursive construction of a DC-tree for a matroid~$M$.  At each
stage, we consider a node of the tree  labeled by a minor $N$ of $M$, and $N$
will have a cycle system $\mathcal{D}$ derived from its parent.  As long as $N$
does not consist solely of bridges and loops, there will exist an element
$e\in\ast\mathcal{D}$.  We then form new nodes $N\setminus e$ and
$N/e$ for the tree with cycle systems as described in Proposition~\ref{prop:del-con cs}.  Thus, to continue the construction, we only need that the unique union $\ast\mathcal{D}$ be \emph{nonempty}, not that it contains a circuit.  Therefore, it is possible that a set of $g(M)$ cycles that does not form a cycle system might be used to construct a DC-tree $T$ for~$M$.  Labeling the leaves of $T$ as described in Section~\ref{subsect:basis-to-cpkg}, produces a collection of integer vectors~$P^*$.  One may then attempt to show that $P^*$ is a pure multicomplex with degree vector equal to the $h$-vector of $M$.

As an example, suppose $G = K_{3,3}$ is the complete bipartite graph on vertex set $\{0,1,2\} \sqcup \{3,4,5\}$. As we have noted, $G$ does not admit a cycle system. Consider the set of $4$-cycles  $C_1 = \{03, 05, 23, 25\}$, $C_2 = \{03,05,13, 15\}$, $C_3 = \{03, 04, 13, 14\}$,  and $C_4=\{03, 04, 23, 24\}$, and define $\cs = \{C_1, C_2, C_3, C_4\}$. One can check that labeling the DC-tree corresponding to these cycles using an edge-ordering
\[
(2, 4)< (0, 4)< (1, 5)< (0, 3)< (1, 4)< (2, 3)< (0, 5)< (2, 5)< (1, 3)
\]
to make choices at nodes, yields the  pure multicomplex with $20$ maximal elements formed by all cyclic permutations of the vectors
\[
(0,2,1,2), (0,2,0,3), (0,1,1,3), (0,1,3,1), (0,3,1,1).
\]
The degree sequence of the multicomplex, $(1, 4, 10, 20, 26, 20)$, is the $h$-vector of $K_{3,3}$.  In this case, one can check that, for any nonempty $\sigma \subseteq [4]$, the unique union $\cs_{\sigma}$ either contains a circuit, or else is a spanning tree for the graph induced on the (usual) union of elements $\cup_{i \in \sigma} C_i$. For example, we have $\cs_{\{1,2,3\}} = \{04, 23, 25, 14, 15\}$.

\subsection{Activity preserving bijections}

Our next open question involves the \emph{activity} of a basis of a matroid $M$. To recall this notion, fix a linear ordering $<$ on the ground set $E$ of $M$. For a basis~$B$, we say that $e \in B$ is \emph{internally passive} in $B$ if it can be replaced by a smaller element to obtain another basis; that is, if $(B \setminus e) \cup e^\prime$ is a basis of ${\mathcal M}$ for some $e^\prime < e$. We say that $e \in B$ is \emph{internally active} if it is not internally passive. An element $e \notin B$ is said to be \emph{externally passive (active)} if it is not (is) the smallest element in the unique circuit containing $B \cup e$. 

It turns out that activity can be used to recover/define the Tutte polynomial of a matroid $M$. In particular, one can show that 
\[T_{M}(x,y) = \sum \tau_{i,j} x^iy^j,\]
\noindent
where $\tau_{i,j}$ is the number of bases of $M$ with $i$ internally active elements and $j$ externally active elements. Note that $\tau_{i,j}$ is independent of the choice of ordering.
We refer to \cite{BrylawskiOxley1992} for more details regarding the Tutte polynomial and external activity.  From this, we can recover the $h$-vector  $(h_0, h_1, \dots, h_d)$ of $M$ in terms of activity: $h_i$ equals the number of bases with $i$ internally passive elements with respect to the ordering of the ground set.
 
From Theorem \ref{thm:main}, we know that for any matroid $M$ with cycle system $\cs$, the number of bases with $i$ internally passive elements is given by the number of coparking functions of degree $i$. It is then natural to ask if one can find an explicit bijection between coparking functions and bases so that a coparking function of degree $i$ is sent to a basis with $i$ internally passive elements (under some ordering of the ground set).

For the case of cographic matroids, such an `activity-preserving' bijection was indeed described by Cori and Le Borgne in \cite{CorLeB} by employing a breadth-first version of Dhar's burning algorithm. Here one must choose an ordering on the ground set (edges) that defines both the activity of a basis (spanning tree), as well as the bijection itself. Any such choice leads to an activity-preserving bijection between $G$-parking functions and spanning trees.

One can view our Algorithms \ref{alg:basis-to-coparking} and \ref{alg:coparking-to-basis} as generalizations of breadth-first Dhar's algorithm. However, one can check that not all choices of ordering on the ground set lead to an activity-preserving bijection. On the other hand, we have not found a case where no ordering works, and it is not clear at this point what properties are required. Hence we ask the following.

\begin{question}
Suppose $M$ is a matroid with cycle system $\cs$ and associated coparking functions~$P^*$. Can one find an ordering of the groundset for which the bijection between~$P^*$ and ${\mathcal B}(M)$ described in Section~\ref{subsect:basis-to-cpkg} preserves activity?
\end{question}
 
 \subsection{Circuit chip-firing}
In the classical setting of chip-firing on a graph $G$, the non-root vertices $V = \{v_1, \dots, v_n\}$ (which determine a cycle system for the cographic matroid) give rise to a chip-firing rule for $G$. In particular, the set of non-root vertices define the \emph{reduced Laplacian} $\tilde{L}(G)$, the $n \times n$ symmetric matrix whose diagonal entries $\tilde{L}(G)_{i,i}$ are given by $\deg(v_i)$ and whose off diagonal entries $\tilde{L}(G)_{i,j}$ are given by $-|\{e:\text{$e$ is an edge incident to $v_i$ and $v_j$}\}|$.

Now, given a configuration of chips $\vec{c} \in {\mathbb Z}^n$ on $V$, one can `fire' a vertex $v_i \in V$ if $c_i \geq \deg(v_i)$, in which case one obtains the new configuration
\[\vec{d} = \vec{c} - \tilde{L}(G)(\vec{e}_i),\]
\noindent
where $\vec{e}_i$ is the standard basis vector corresponding to $v_i$. In a similar way, one can fire a subset $S \subset V$.

This setup has been extended to a more general setting, where one replaces the matrix $\tilde{L}$ with other $n \times n$ distribution matrices $L$. Here we assume the diagonal entries of $L$ are positive, while the off diagonal entries are nonpositive. In \cite{GuzKli}, Guzm\'an and Klivans have shown that one obtains a good theory of chip-firing whenever the matrix $L$ has the \emph{avalanche finite} property, so that repeated firings (in the above sense) eventually stabilize, from any given starting configuration. Such matrices are known as $M$-matrices in the literature, and can be characterized in a number of ways.  In particular, if $L$ is an $M$-matrix, then each element of ${\mathbb Z}^n/\im L$ (the so-called \emph{critical group} of $L$) has a unique superstable representative, as well as a unique critical representative.

Now, for any matroid $M$ with cycle system $\cs = \{C_1, C_2, \dots, C_g\}$, one can define a matrix $L(\cs)$ in an analogous way to the reduced Laplacian, where  $L(\cs)_{i,i} = |C_i|$ and $L(\cs)_{i,j} = -|C_i \cap C_j|$ for $i\neq j$. A natural question to ask is when this matrix is an $M$-matrix, leading to a theory of chip-firing with good notions of superstable and critical configurations. As mentioned above, the only cycle system on the graphic matroid of $K_5$ arises by considering $K_5$ as a cone on $K_4$. For this cycle system $\cs$, the matrix $L(\cs)$ is not an $M$-matrix, as its inverse has negative entries. 

It turns out that even for planar graphs (which always admit cycle systems), certain choices of cycle system $\cs$ will define a matrix $L(\cs)$ that is not an $M$-matrix.

\begin{question}
 Suppose $M$ is a matroid that admits a cycle system. When does there exist a choice of cycle system $\cs$ on $M$ such that $L(\cs)$ is an $M$-matrix?
\end{question}

\subsection{Algebraic aspects}
Our last open questions involve an algebraic approach to the study of coparking functions. Recall that if $G$ is a connected graph with nonsink vertices $[n]$, and ${\mathbb K}$ is some fixed field, one can define a monomial ideal $M_G$ in the polynomial ring $S = {\mathbb K}[x_1, \dots, x_n]$ whose standard monomials are the $G$-parking functions. Here we identify a monomial with its exponent vector. It follows that the ${\mathbb K}$-dimension of the algebra ${\mathcal A}_G = S/M_G$ is given by the number of spanning trees of $G$. Even more, the Hilbert series of $S/M_G$ (which equals the Hilbert polynomial since $S/M_G$ is Artinian) coincides with the polynomial $T_G(1,y)$ (in reverse order).

 This is the approach taken by Postnikov and Shapiro in \cite{PosSha}, where they also show that a related `power algebra' ${\mathcal B}_G = S/P_G$ has a basis given by the same collection of monomials corresponding to $G$-parking functions. In particular, the Hilbert series of ${\mathcal A}_G$ and ${\mathcal B}_G$ coincide. 
 Here $P_G$ is an ideal generated by powers of certain linear forms. Such power algebras fit into a larger class of objects studied by Ardila and Postnikov in \cite{ArdilaPostnikov}, and are related to the theories of fat point ideals, Cox rings, and box splines. 
 
 In \cite{PosSha} the authors introduce the class of \emph{monotone monomial ideals} and \emph{$I$-deformations}, of which the ideals $M_G$ and $P_G$ are special cases. They show that the $I$-deformations always have Hilbert functions that dominate those of the original monotone monomial ideals. This is particularly interesting since the monomial ideal is almost never an initial ideal of the deformation, so that usual degeneration techniques from the theory of Gr\"obner basis cannot be applied.

Now, if $M$ is a matroid with cycle system $\cs = \{C_1, \dots, C_g\}$ and ${\mathbb K}$ is a field, one can define a monomial ideal in a similar way. For any subset $\sigma \subseteq [g]$ and $i \in I$, define 
\[d_\sigma(i) = |\cs_{\sigma}\cap C_{i}|,\]
\noindent
the number of elements that appear in the unique union $\cs_\sigma$, and which are also elements of $C_i$.

Fix a field ${\mathbb K}$ and define $M_\cs = \langle m_\sigma \rangle$ to be the monomial ideal in the polynomial ring $S = {\mathbb K}[x_1, \dots, x_g]$ generated by the monomials 
\[m_\sigma = \prod_{i \in \sigma} x_i^{d_\sigma(i)},\]
\noindent
where $\sigma$ ranges over all nonempty subsets of $[g]$. By construction, the (exponent vectors of the) standard monomials of $M_\cs$ are the coparking functions associated to $\cs$. 
It is also not hard to see that $M_\cs$ is an Artinian monotone monomial ideal, and hence the tools from \cite{PosSha} can be applied. In particular the Hilbert polynomial of $S/M_\cs$ is given by the Tutte evaluation $T_M(x,1)$ (in reverse order), and the Hilbert function of any $I$-deformation of $M_\cs$ dominates that of $M_\cs$.

\begin{question}
For a matroid $M$ with cycle system $\cs$, can one compute the the Hilbert polynomial of an $I$-deformation of $M_\cs$?  For which deformations does this function coincide with the Tutte evaluation $T_M(x,1)$ (in reverse order)?  
\end{question}

In \cite{PosSha}, the authors also consider free resolutions of \emph{order ideals}, a class of monomial ideals which further generalize monotone monomial ideals. They in particular describe a cellular resolution of $M_G$ that is minimal only in the case that $G = K_{n+1}$ is the complete graph. In \cite{DocSan} the authors describe a \emph{minimal} cellular resolution of $M_G$ for an arbitrary graph $G$. The resolution here is supported on a complex obtained from the graphic hyperplane arrangement associated to $G$.

\begin{question}
    Can one describe a minimal (cellular) resolution of the ideal $M_\cs$?
    \end{question}

\appendix
\section*{Appendix}\label{appendix}
\addcontentsline{toc}{section}{Appendix} 

Here we discuss circuit systems for graphs with at most eight vertices and give examples of matroids with circuit systems but no fundamental circuit system.  For this part, take all graphs to be connected.  For graphs with more than five vertices, our results depend on computer searches using Sage.
\medskip

\noindent \textbf{Graphs with at most eight vertices.}
Every graph with at most five vertices has a cycle system. Indeed, all of these are planar except for $K_5$, which is a cone. 
\medskip
     
      \noindent\textsc{six-vertex graphs}. There are only three six-vertex graphs without cycle systems: the complete bipartite graph $K_{3,3}$ and the two others pictured in Figure~\ref{fig: 6 vertices}.
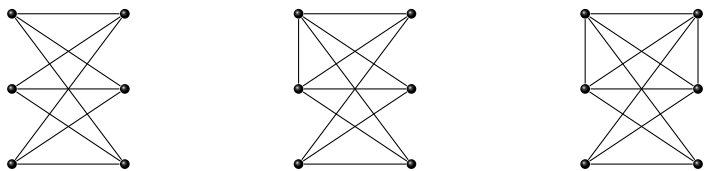
\begin{figure}[ht]
\centering
    \begin{tikzpicture}
      \begin{scope}
        \node[ball] (0) at (0,0) {};
        \node[ball] (1) at (0,1) {};
        \node[ball] (2) at (0,2) {};
        \node[ball] (3) at (1.5,0) {};
        \node[ball] (4) at (1.5,1) {};
        \node[ball] (5) at (1.5,2) {};
        \draw (0)--(3);
        \draw (0)--(4);
        \draw (0)--(5);
        \draw (1)--(3);
        \draw (1)--(4);
        \draw (1)--(5);
        \draw (2)--(3);
        \draw (2)--(4);
        \draw (2)--(5);
      \end{scope}
      \begin{scope}[xshift=1.5in]
        \node[ball] (0) at (0,0) {};
        \node[ball] (1) at (0,1) {};
        \node[ball] (2) at (0,2) {};
        \node[ball] (3) at (1.5,0) {};
        \node[ball] (4) at (1.5,1) {};
        \node[ball] (5) at (1.5,2) {};
        \draw (0)--(3);
        \draw (0)--(4);
        \draw (0)--(5);
        \draw (1)--(3);
        \draw (1)--(4);
        \draw (1)--(5);
        \draw (2)--(3);
        \draw (2)--(4);
        \draw (2)--(5);
        \draw (1)--(2);
      \end{scope}
      \begin{scope}[xshift=3.0in]
        \node[ball] (0) at (0,0) {};
        \node[ball] (1) at (0,1) {};
        \node[ball] (2) at (0,2) {};
        \node[ball] (3) at (1.5,0) {};
        \node[ball] (4) at (1.5,1) {};
        \node[ball] (5) at (1.5,2) {};
        \draw (0)--(3);
        \draw (0)--(4);
        \draw (0)--(5);
        \draw (1)--(3);
        \draw (1)--(4);
        \draw (1)--(5);
        \draw (2)--(3);
        \draw (2)--(4);
        \draw (2)--(5);
        \draw (1)--(2);
        \draw (4)--(5);
      \end{scope}
    \end{tikzpicture}
    \caption{The three 6-vertex graphs without cycle systems.}\label{fig: 6 vertices}
\end{figure}
    Beside these three graphs, there is only one other graph with six vertices that is neither planar nor coned.  It is pictured in Figure~\ref{fig: 6 np nc cs}.
    It has a circuit system since it is derived from $K_5$, which is a cone, by subdividing an edge (see~part~(\ref{item:example-subdivision}), below).
    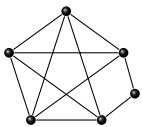
\begin{figure}[h]
    \centering
    \begin{tikzpicture}[scale=0.8]
      \def\r{1}
      \node[ball] (0) at (72*0+18:\r) {};
      \node[ball] (1) at (72*1+18:\r) {};
      \node[ball] (2) at (72*2+18:\r) {};
      \node[ball] (3) at (72*3+18:\r) {};
      \node[ball] (4) at (72*4+18:\r) {};
      \node[ball] (5) at (-18:1.2*\r) {};
      \draw (0)--(1)--(2)--(3)--(4);
      \draw (0)--(2)--(4)--(1)--(3);
      \draw (4)--(5)--(0);
      \draw (0)--(3);
    \end{tikzpicture}
    \caption{The unique non-planar, non-coned graph with six vertices with a circuit system.}\label{fig: 6 np nc cs}
    \end{figure}
    \medskip

    \noindent\textsc{seven-vertex graphs}.
    There are $853$ graphs with seven vertices.  Of these, we know of $779$ that have circuit systems and $61$ that have no circuit system.  The remaining $13$ are too large for our search methods (see~part~(\ref{item:cs-finding algorithm})).  For instance, one is pictured in Figure~\ref{fig: cs exists?}.  It has $223$ circuits and corank $10$, and thus has $\binom{223}{10}$, i.e., over $68$ quadrillion, potential circuit systems.
    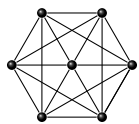
\begin{figure}[h]
    \centering
    \begin{tikzpicture}[scale=0.8]
        \foreach \i in {0,1,2,3,4,5} {
            \node[ball] (\i) at (\i*60:1) {};
        }
        \node[ball] (c) at (0,0) {};
        \draw (c)--(0);
        \draw (c)--(1);
        \draw (c)--(2);
        \draw (c)--(3);
        \draw (c)--(4);
        \draw (c)--(5);
        \draw (0)--(1);
        \draw (0)--(2);
        \draw (0)--(4);
        \draw (0)--(5);
        \draw (1)--(2);
        \draw (1)--(3);
        \draw (1)--(5);
        \draw (2)--(3);
        \draw (2)--(4);
        \draw (3)--(4);
        \draw (3)--(5);
        \draw (4)--(5);
        \draw (5)--(0);
    \end{tikzpicture}
    \caption{It is not known whether this graph has a circuit system.}\label{fig: cs exists?}
    \end{figure}
    
    We now try to account for the existence of circuit systems on some of these graphs. Recall that any graph that is planar, coned, or $K_{3,3}$-free has a circuit system. Of the $853$ seven-vertex graphs, there are~$722$ that are either planar or coned. There are then $17$ more that are neither planar nor coned but are $K_{3,3}$-free. Thus, three properties account for all but $40$ of the seven-vertex graphs with circuit systems.  
    
    Continuing, let $A$ be the set of connected seven-vertex graphs that are neither planar, coned, nor $K_{3,3}$-free.  It has~$114$ elements.  
    As discussed previously, if a graph is not biconnected, then it has a circuit system if and only if each of its biconnected components has a circuit system.  So circuit systems on these graphs are really questions about circuit systems on smaller graphs.  Among the $98$ biconnected elements of~$A$, 
    there are $27$ with fundamental circuit systems, $3$ with circuit systems but no fundamental circuit system (one of which appears in Figure~\ref{fig: 7 no fundamental}), $55$ with no circuit system, and $13$ whose status is unknown.  

    Non-isomorphic graphs may have isomorphic cycle matroids.  Seven-vertex graphs give rise to $533$ non-isomorphic matroids.  We have found that $462$  have circuit systems and $58$ do not.  There remain $13$ whose status is undetermined. 
    \medskip

    \noindent\textsc{eight-vertex graphs}.
    There are $11117$ connected eight-vertex graphs. Among these, there are exactly $6991$ with fundamental circuit systems.  We have found an additional $1185$ with circuit systems but no fundamental circuit system, and $433$ with no circuit system. The status of the remaining $2508$ is unknown.  There are $6916$ eight-vertex graphs that are either planar, coned, or $K_{3,3}$-free, and hence must have a circuit system.

    Let $A$ be the set of connected eight-vertex graphs that are neither planar, coned, nor $K_{3,3}$-free.  Then $A$ has $4201$ elements, $1225$ of which have fundamental circuit systems.  We know of $35$ more with circuit systems but no fundamental circuit system, and $433$ with no circuit system.  The status of the remaining $2508$ is unknown.  There are $3460$ elements of $A$ that are biconnected; $834$ have fundamental circuit systems, we know of an additional $25$ with a circuit system but no fundamental circuit system, $267$ with no circuit system, and the status of the remaining is unknown.

    The cycle matroids of the $11117$ eight-vertex graphs form $7303$ isomorphism classes.  We have found $4633$ that have circuit systems and $312$ that do not. There remain $2358$ whose status is undetermined.
\medskip

\noindent\textbf{Matroids with circuit systems but no fundamental circuit system.} 
 One can show that a matroid $M$ possesses a fundamental circuit system if and only if the dual matroid $M^*$ possesses a fundamental circuit system.  In particular, if $G$ is a graph with no circuit system, e.g., $K_{3,3}$, then the dual of its cycle matroid has no fundamental circuit system. However, the dual is cographic and, thus, has a circuit system.  
 
 A computer search shows that every graph with fewer than six vertices has a fundamental circuit system.  Figure~\ref{fig: 6 no fundamental} displays the three six-vertex graphs that have circuit systems (as do all planar graphs) but no fundamental circuit system (as verified using Sage).  The left-most is a ``double-cone'' over a four-cycle (rectangle), and the next two are obtained by successively deleting edges.
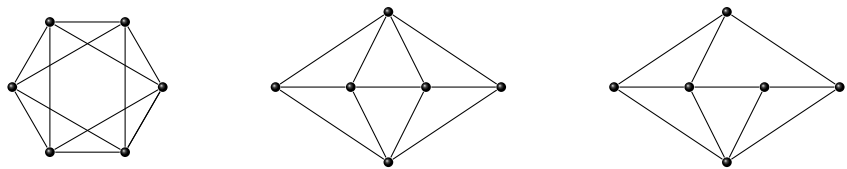
\begin{figure}[ht]
\centering
\begin{tikzpicture}
  \begin{scope}
    \foreach \i in {0,1,2,3,4,5} {
        \node[ball] (\i) at (\i*60:1) {};
    }
    \draw (0)--(1);
    \draw (0)--(2);
    \draw (0)--(4);
    \draw (0)--(5);
    \draw (1)--(2);
    \draw (1)--(3);
    \draw (1)--(5);
    \draw (2)--(3);
    \draw (2)--(4);
    \draw (3)--(4);
    \draw (3)--(5);
    \draw (4)--(5);
    \draw (5)--(0);
  \end{scope}
  \begin{scope}[xshift=2.5cm]
    \node[ball] (0) at (0,0) {};
    \node[ball] (1) at (1,0) {};
    \node[ball] (2) at (2,0) {};
    \node[ball] (3) at (3,0) {};
    \node[ball] (4) at (1.5,1) {};
    \node[ball] (5) at (1.5,-1) {};
    \draw (0)--(1);
    \draw (1)--(2);
    \draw (2)--(3);
    \draw (4)--(0);
    \draw (4)--(1);
    \draw (4)--(2);
    \draw (4)--(3);
    \draw (5)--(0);
    \draw (5)--(1);
    \draw (5)--(2);
    \draw (5)--(3);
  \end{scope}
  \begin{scope}[xshift=7cm]
    \node[ball] (0) at (0,0) {};
    \node[ball] (1) at (1,0) {};
    \node[ball] (2) at (2,0) {};
    \node[ball] (3) at (3,0) {};
    \node[ball] (4) at (1.5,1) {};
    \node[ball] (5) at (1.5,-1) {};
    \draw (0)--(1);
    \draw (1)--(2);
    \draw (2)--(3);
    \draw (4)--(0);
    \draw (4)--(1);
    \draw (4)--(3);
    \draw (5)--(0);
    \draw (5)--(1);
    \draw (5)--(2);
    \draw (5)--(3);
  \end{scope}
\end{tikzpicture}
\begin{minipage}{0.8\textwidth}
\caption{The three 6-vertex graphs with circuit systems but no fundamental circuit system.}\label{fig: 6 no fundamental}
\end{minipage}
\end{figure}
There are several graphs with seven vertices that have circuit systems but no fundamental circuit systems.  The one with the smallest number of edges is shown in Figure~\ref{fig: 7 no fundamental}.  A computer search shows it has exactly one circuit system, consisting of the edges of the square $3476$ and the triangles $145, 245,745, 134, 234, 567$.

\begin{figure}[h]
\centering
\begin{tikzpicture}
    \node[ball, label=right:$4$] (0) at (-18:1) {};
    \node[ball, label=right:$1$] (1) at (54:1) {};
    \node[ball, label=left:$2$] (2) at (126:1) {};
    \node[ball, label=left:$3$] (3) at (198:1) {};
    \node[ball, label=below:$5$] (4) at (270:1) {};
    
    \node[ball, label=left:$6$] (5) at (-0.9511,-1.9022) {};
    \node[ball, label=right:$7$] (6) at (0.9511,-1.9022) {};
    
    \draw (0)--(1);
    \draw (0)--(2);
    \draw (0)--(3);
    \draw (0)--(4);
    \draw (0)--(6);
    \draw (1)--(3);
    \draw (1)--(4);
    \draw (2)--(3);
    \draw (2)--(4);
    \draw (3)--(5);
    \draw (4)--(5);
    \draw (4)--(6);
    \draw (5)--(6);
\end{tikzpicture}
\caption{A non-planar graph with circuit system but no fundamental circuit system.}\label{fig: 7 no fundamental}
\end{figure}
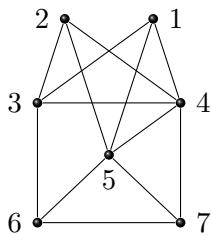
\bibliographystyle{plain}
\bibliography{cycle_systems}
\end{document}